\def\newaliasedtheorem#1[#2]#3{
  \newaliascnt{#1@alt}{#2}
  \newtheorem{#1}[#1@alt]{#3}
  \expandafter\newcommand\csname #1@altname\endcsname{#3}
}
\numberwithin{equation}{section}
\newtheoremstyle{slanted}{\topsep}{\topsep}{\slshape}{}{\bfseries}{.}{.5em}{}
\theoremstyle{plain}
\newtheorem{theorem}{Theorem}[section]
\theoremstyle{definition}
\theoremstyle{remark}
\newcommand{\setN}{\mathbb{N}}
\newcommand{\setR}{\mathbb{R}}
\renewcommand{\subset}{\subseteq}
\newcommand{\defeq}{\mathrel{\mathop:}=}
\newcommand{\mres}{\mathbin{\vrule height 1.6ex depth 0pt width 0.13ex\vrule height 0.13ex depth 0pt width 1.3ex}}
\let\altphi\phi
\let\phi\varphi
\let\varphi\altphi
\let\altphi\undefined
\newcommand{\weakto}{\rightharpoonup}
\let\div\undefined
\DeclareMathOperator{\div}{div}
\newcommand{\di}{\mathrm{d}}
\newcommand{\loc}{{ loc}}
\newcommand{\res}{\mathop{\hbox{\vrule height 7pt width .5pt depth 0pt
\vrule height .5pt width 6pt depth 0pt}}\nolimits}
\DeclareMathOperator{\supp}{supp}
\newcommand{\Ch}{{\sf Ch}}
\DeclareMathOperator{\Lip}{Lip}
\DeclareMathOperator{\lip}{lip}
\newcommand{\Lipbs}{\Lip_{bs}}
\newcommand{\haus}{\mathscr{H}}
\newcommand{\XX}{{\mathsf{X}}}
\newcommand{\YY}{{\mathsf{Y}}}
\newcommand{\ZZ}{{\mathsf{Z}}}
\newcommand{\dist}{\mathsf{d}}
\newcommand{\meas}{\mathfrak{m}}
\newcommand{\mass}{\mathfrak{m}}
\newcommand{\heat}{{\mathrm {h}}}
\newcommand{\hess}{{\mathrm{Hess}}}
\newcommand{\tra}{{\mathrm{tr}}}
\DeclareMathOperator{\RCD}{RCD}
\DeclareMathOperator{\ncRCD}{ncRCD}
\DeclareMathOperator{\CD}{CD}
\newfont{\tmpf}{cmsy10 scaled 2500}
\newcommand{\X}{\XX}  							
\newcommand{\sfd}{\dist}  							
\newcommand{\mm}{\meas}							
\newcommand{\R}{\setR}							
\newcommand{\N}{\setN}							
\newcommand{\D}{{\rm D}}
\renewcommand{\d}{\di}							
\newcommand{\Y}{{\YY}}
\DeclareMathOperator*{\esssup}{{\rm ess\,sup}}
\newcommand{\fr}{\penalty-20\null\hfill$\blacksquare$}         
\begin{document}

\title{Weakly non-collapsed RCD spaces are strongly  non-collapsed}
\author{
Camillo Brena
\thanks{Scuola Normale Superiore, \url{camillo.brena@sns.it}}
  \and
Nicola Gigli
\thanks{Scuola Internazionale Superiore di Studi Avanzati, \url{ngigli@sissa.it}}
  \and
Shouhei Honda
\thanks{Tohoku University, \url{shouhei.honda.e4@tohoku.ac.jp}}
  \and
Xingyu Zhu
\thanks{Georgia Institute of Technology, \url{xyzhu@gatech.edu}} } 
\maketitle

\begin{abstract}
We prove that any weakly non-collapsed RCD space is actually non-collapsed, up to a renormalization of the measure. This confirms a conjecture raised by De Philippis and the second named author in full generality.

One of the auxiliary results of independent interest that we obtain is about the link between the properties
\begin{itemize}
\item[-] $\mathrm{tr}(\mathrm{Hess}f)=\Delta f$ on $U\subset\X$ for every $f$ sufficiently regular,
\item[-] $\mm=c\haus^n$ on $U\subset\X$ for some $c>0$,
\end{itemize} 
where  $U\subset\X$ is open and $\X$ is a - possibly collapsed - RCD space of essential dimension  $n$.
\end{abstract}

\tableofcontents


\section{Introduction}
\subsection{Main result and some comments}
The modern theory of metric measure spaces with Ricci curvature bounded from below began with the seminal papers \cite{Lott-Villani09}, \cite{Sturm06I,Sturm06II}, where lower bounds on the Ricci curvature were imposed via suitable convexity properties of entropy functionals in the geometry of optimal transportation. It turned out that the resulting class $\CD(K,N)$ of spaces contains smooth Finslerian structures, a property that - at least for some geometric applications - is undesirable. This was one of the motivations that led the second named author develop a research program about heat flow on $\CD$ spaces (see \cite{Gigli10}, \cite{Gigli-Kuwada-Ohta10}, \cite{AmbrosioGigliSavare11},  \cite{Ambrosio_2014}) that ultimately led in \cite{Gigli12} to the definition of $\RCD$ spaces as those $\CD$ spaces for which the Sobolev space $W^{1,2}$ is Hilbert. We refer to \cite{Villani2017}, \cite{AmbICM} for an account of the theory and more detailed bibliography.

The study of (${\sf R}$)$\CD$ spaces has been strongly influenced by the research program carried out in the late nineties by Cheeger-Colding (see \cite{Cheeger-Colding96,Cheeger-Colding97I,Cheeger-Colding97II,Cheeger-Colding97III}) about the structure of Ricci-limit spaces, i.e.\ those spaces arising as Gromov-Hausdorff limits of smooth Riemannian manifolds with constant dimension  and a uniform lower bound on the Ricci curvature. One of the things that emerged by their analysis (strictly related to Colding's volume convergence theorem \cite{Colding97}) is that the dimension of the limit space is always bounded from above by the dimension of the manifolds, and that if equality holds, then the limit structure - that in this case is called \emph{non-collapsed Ricci limit space} - has better regularity properties.

It is therefore natural to look for a synthetic counterpart of this concept, whose definition should not rely on properties of approximating sequences, but rather be based on intrinsic properties of the space in consideration. Inspecting the properties of non-collapsed Ricci limit spaces, in \cite{DPG17} it has been proposed the following definition:
\begin{definition}
Let $K\in\R$ and $N\in[1,\infty)$. A space $(\X,\sfd,\mm)$ is called \emph{non-collapsed} $\RCD(K,N)$ space, $\ncRCD(K,N)$ in short, if it is an $\RCD(K,N)$ space and moreover $\mm=\haus^N$.
\end{definition}
Notice that from the structural properties of $\RCD$ spaces (\cite{Mondino-Naber14}, \cite{KelMon16}, \cite{GP16-2}) it is clear that if $(\X,\sfd,\mm)$ is $\ncRCD(K,N)$, then $N$ must be an integer. Also, subsequent analysis showed that, as expected, $\ncRCD$ spaces have better regularity properties than arbitrary $\RCD$ spaces (see for instance \cite{antonelli2019volume}).

\bigskip

The analysis carried out by Cheeger-Colding and the analogy with the study of the Bakry-\'Emery $N$-Ricci curvature tensor (see \eqref{eq:beric} and the subsequent discussion) suggest that in fact $\ncRCD(K,N)$ spaces should be identifiable among $\RCD(K,N)$ ones by properties seemingly weaker than the one $\mm=\haus^N$. To be more precise we need to introduce the $N$-dimensional (Bishop-Gromov) density $\theta_N[\X,\sfd,\mm]:\X\to[0,\infty]$ as
\[
\theta_N[\X,\sfd,\mm](x):=\lim_{r\to0^+}\frac{\mm(B_r(x))}{\omega_Nr^N}\qquad\forall x\in\X,
\]
where $\omega_N:=\frac{\pi^{N/2}}{\int_0^\infty t^{N/2}e^{-t}\,\d t}$ is, for $N\in\N$, the volume of the unit ball of $\R^N$ (notice that the existence of the limit is a consequence of the Bishop-Gromov inequality). It is worth pointing out that standard results about differentiation of measures ensure that if $\haus^N$ is a Radon measure on $\X$, then
\begin{equation}\notag
\limsup_{r\to0^+}\frac{\haus^N(B_r(x))}{\omega_Nr^N}\leq 1\qquad\text{for $\haus^N$-a.e.\ }x\in\X.
\end{equation}
In particular, if $\X$ is a $\ncRCD(K,N)$ space we have
\begin{equation}\label{eq:densityest1}
	\theta_N[\X,\sfd,\mm](x)\leq 1\qquad\forall x\in\XX.
\end{equation}

Then the following conjecture is raised in \cite{DPG17}:
\begin{conjecture}[De Philippis-Gigli]\label{conj}
If
\begin{equation}\label{eq:positive}
\meas \left( \left\{x \in \XX: \theta_N[\XX,\dist,\meas](x)<\infty\right\}\right)>0,
\end{equation}
then $\meas=c\haus^N$ for some $c \in (0, \infty)$. In particular $(\XX, \dist,c^{-1} \meas)$ is a  $\ncRCD(K, N)$ space.
\end{conjecture}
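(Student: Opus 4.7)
My plan is to deduce the conjecture from the auxiliary result stated in the abstract, via two reductions: first to the case where the essential dimension of $\X$ equals $N$, then to establishing a local trace--Hessian identity on an open subset of $\{\theta_N<\infty\}$.

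\textbf{Reduction to essential dimension $N$.} By the structure theory for $\RCD$ spaces (Mondino--Naber, Bru\`e--Semola), the measure $\mm$ is concentrated on the regular set $\RR_n$ for the essential dimension $n\le N$, and at $\mm$-a.e.\ $x$ one has $\mm(B_r(x))/(\omega_n r^n)\to c_x\in(0,\infty)$ as $r\to 0^+$. Hence, if $n<N$,
\[
\frac{\mm(B_r(x))}{\omega_N r^N}=\frac{\omega_n}{\omega_N}\cdot\frac{\mm(B_r(x))}{\omega_n r^n}\cdot r^{n-N}\longrightarrow +\infty
\]
at $\mm$-a.e.\ $x$, in contradiction with \eqref{eq:positive}. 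Therefore $n=N$, and in particular $\mm\ll\haus^N$.

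\textbf{Trace--Hessian identity and globalization.} From $\mm(\{\theta_N<\infty\})>0$ together with the lower semicontinuity of $\theta_N$ inherited from Bishop--Gromov monotonicity, one can select an open set $U\subset\X$ of positive $\mm$-measure on which $\theta_N$ is suitably controlled. The goal is then to prove $\tra(\hess f)=\Delta f$ on $U$ for every sufficiently regular $f$. The expected mechanism is that finiteness of $\theta_N$ rigidifies the improved Bochner inequality of Gigli--Han: its equality case corresponds precisely to the vanishing of the ``drift'' $\tra(\hess f)-\Delta f$, and this vanishing should follow from the saturation of Bishop--Gromov at small scales encoded by $\theta_N<\infty$. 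Once the identity is in place, the auxiliary result directly yields $\mm\measrestr U=c_U\,\haus^N\measrestr U$ for some $c_U>0$. A covering / open--closed argument, leveraging connectedness of $\X$ as a length space together with continuity properties of $\theta_N$ along tangent-cone sequences, then promotes this to the global statement $\mm=c\,\haus^N$ with a single constant $c>0$; after renormalization, $(\X,\sfd,c^{-1}\mm)$ is $\ncRCD(K,N)$.

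\textbf{Main obstacle.} The hardest part is extracting the functional identity $\tra(\hess f)=\Delta f$ from the merely \emph{pointwise} finiteness of the Bishop--Gromov density. Unlike the converse implication in the auxiliary result (essentially a divergence computation once $\mm=c\haus^n$ is known), this direction presumably requires a blow-up analysis at typical points of $\{\theta_N<\infty\}$, combined with the mGH-stability of the second-order calculus on $\X$ and careful use of the equality cases in Bochner / Bishop--Gromov. I expect this to be the technical heart of the argument.
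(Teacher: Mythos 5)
Your overall two-step scaffold — first show the essential dimension equals $N$, then get the trace--Hessian identity and feed it into the auxiliary theorem — is the right shape and matches the paper's proof of Theorem \ref{thm:mainres}. Your first reduction is essentially correct and is what Theorem \ref{thm:equivnon} records. However, you have misplaced the difficulty and sketched an unnecessary (and unfinished) mechanism for the step you call the ``technical heart.''

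Once the essential dimension of $\X$ is known to be $N$, the identity $\tra(\hess f)=\Delta f$ for all $f\in D(\Delta)$ is \emph{not} hard, does not require blow-ups, and does not require passing to a subset where $\theta_N$ is controlled. The point, encapsulated in Theorem \ref{thm:generalizedhan} (Han's result), is that the Bochner inequality \eqref{eq:bochner} built into the definition of $\RCD(K,N)$ uses the parameter $N$; when $N$ equals the essential dimension $n$, this is precisely inequality \eqref{eq:locbo} with $U=\X$, so by Theorem \ref{thm:generalizedhan} the trace identity holds $\mm$-a.e.\ on all of $\X$. No rigidity/equality-case analysis of Bishop--Gromov at small scales is needed, no blow-up, no open set $U$, no covering/open--closed argument to globalize. (Your covering argument also goes away because the volume lower bound \eqref{main:uniformbound} holds on every bounded set, for $n=N$, simply by Bishop--Gromov \eqref{eq:bishgrom}, so Theorem \ref{thm:main} applies directly with $U=\X$, which is connected.) As a side remark, the ``lower semicontinuity of $\theta_N$'' you invoke is in the wrong direction: Bishop--Gromov monotonicity gives \emph{upper} semicontinuity of the density; fortunately your argument does not actually need either.

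Where the real work lies is in the auxiliary result itself (Theorem \ref{thm:main}, and more precisely its key ingredient, Theorem \ref{intbyparts}), which you treat as a black box: proving that the trace identity plus the volume lower bound forces $\mm=c\haus^n$. This is what requires the heat-kernel smoothing $g_t$, the formula $\nabla^*g_t=-\tfrac14\d\Delta p_{2t}$, the Gaussian decay estimates, and the $L^p_{loc}$ convergence $t\mm(B_{\sqrt t}(\cdot))g_t\to c_n g$. If you are allowed to assume the abstract's auxiliary result, your proposal, suitably simplified by replacing the speculative blow-up step with the one-line appeal to Theorem \ref{thm:generalizedhan}, does give the conjecture. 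As written, though, the step you flag as the hard core is trivial, and the step you take for granted is where all the difficulty and novelty lives.
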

Let us make few comments about the statement of the conjecture and its validity.

First of all, we remark that condition \eqref{eq:positive} cannot be replaced by the weaker one
\[
\left\{x \in \XX: \theta_N[\XX,\dist,\meas](x)<\infty\right\} \neq \emptyset
\]
because for instance the metric measure space $([0, \pi], \dist_{\mathbb{R}}, \sin^{N-1}t\di t)$ is an $\RCD(N-1, N)$ space, the density $\theta_N$ is finite on $\{0, \pi\}$ which is null with respect to the reference measure $\sin^{N-1}t\di t$, and for any $N>1$, $\sin^{N-1}t\di t$ does not coincides with $c\haus^N$ for any $c \in (0, \infty)$.

Moreover let us point out that the Hausdorff dimension of any $\CD(K,N)$ space $\X$ is at most $N$ \cite{Sturm06II}. In this sense, the assumption in Conjecture \ref{conj} amounts at asking for the existence of a `big' portion of the space with maximal dimension (notice for instance that if $\mm\ll\haus^\alpha$ for some $\alpha<N$, then $\theta_N=+\infty$ $\mm$-a.e.). Such `maximality' of $N$ in the conjecture plays an important role. To see why, consider  an $n$-dimensional weighted Riemannian manifold $(M, g, e^{-V}\di \mathrm{Vol}_g)$, where $V\in C^\infty(M)$ and $\mathrm{Vol}_g$ denotes the Riemannian volume measure and recall that for $N\geq1$ the Bakry-\'Emery $N$-Ricci curvature tensor is defined as
\begin{equation}
\label{eq:beric}
\mathrm{Ric}_N:=\left\{\begin{array}{ll}
 \mathrm{Ric}_g+ \hess_g(V) -\frac{\dd V\otimes \dd V}{N-n}&\qquad\text{ if }N>n,\\
 \mathrm{Ric}_g&\qquad\text{ if $N=n$ and $V$ is constant},\\
 -\infty&\qquad\text{ otherwise},
\end{array}\right.
\end{equation}
where $\mathrm{Ric}_g$ is the standard Ricci curvature induced by the metric tensor $g$ and defined as trace of the Riemann curvature tensor. It is then known (see \cite{BakryEmery85},   \cite{AmbrosioGigliSavare12}, \cite{Erbar-Kuwada-Sturm13}) that  
\begin{equation}
\label{eq:bekn}
\text{$(M, \dist_g, e^{-V}\mathrm{Vol}_g)$ is an $\RCD(K, N)$ space if and only if $\mathrm{Ric}_N\geq Kg$.}
\end{equation}
On the other hand, it is clear that $(e^{-V} \mathrm{Vol}_g)(B_r(x))\sim r^n$ for every $x\in M$ as $r\rightarrow 0^+$, thus assumption \eqref{eq:positive} holds if and only if $n=N$, and this information together with $\mathrm{Ric}_N\geq Kg$ forces $V$ to be constant by the very definition of $\mathrm{Ric}_N$.

\bigskip

It is now time to point out that thanks to the main result of \cite{bru2018constancy} - and the aforementioned structural properties - we now know that any $\RCD(K,N)$ space $(\X,\sfd,\mm)$ admits an \emph{essential dimension} $n\in\N\cap [1,N]$, meaning in particular that $\mm\ll\haus^n\ll\mm$ on the Borel set ${\mathcal R}_n^*$ (see \eqref{eq:rnstar} below), where $\mass(\XX\setminus{\mathcal R}_n^* )=0$. We thus see from general results about differentiation of measures that
\begin{equation}
\label{eq:fromloctoglob}
\text{if \eqref{eq:positive} holds, then we have $\theta_N[\X,\sfd,\mm]<\infty\quad\mm$-a.e.}.
\end{equation}
$\RCD(K,N)$ spaces for which $\theta_N[\X,\sfd,\mm]$ is finite $\mm$-a.e.\ have been called \emph{weakly non-collapsed $\RCD$ spaces} in \cite{DPG17}, while spaces such that $\theta_N[\X,\sfd,\mm]$ is finite for \emph{every} point have been called `non-collapsed' in \cite{Kita17}. It is then clear from \eqref{eq:densityest1} that
\[
\text{non-collapsed} \Longrightarrow \text{non-collapsed in the sense of \cite{Kita17}} \Longrightarrow \text{weakly non-collapsed} 
\]
and from \eqref{eq:fromloctoglob} that proving Conjecture \ref{conj} is equivalent to proving that these three `non-collapsing conditions' are equivalent (up to multiplying the reference measure by a scalar).

It is known that the conjecture holds true in the following three cases:
\begin{enumerate}
\item $(\XX, \dist)$ has an upper bound on sectional curvature in a synthetic sense, namely, it is a CAT$(\kappa)$ space for some $\kappa>0$:  \cite{kapovitch2019weakly}
\item $(\XX, \dist)$ is isometric to a smooth Riemannian manifold, possibly with boundary: \cite{han2021measure}.
\item $(\XX, \dist)$ is compact: \cite{H19}.
\end{enumerate}
Our main result is the resolution of Conjecture \ref{conj} in full generality:
\begin{theorem}\label{thm:mainres}
Conjecture \ref{conj} holds true.
\end{theorem}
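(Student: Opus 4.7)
The plan is to apply the auxiliary equivalence stated in the abstract—between $\tra(\hess f) = \Delta f$ for every sufficiently regular $f$ and $\mm = c\haus^n$, where $n$ is the essential dimension—taking $U = \X$, after establishing the trace identity globally via a Bochner-type argument. This reduces the main theorem to two subclaims: that the essential dimension $n$ of $(\X, \sfd, \mm)$ equals $N$, and that the trace identity holds on the whole space.

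The first subclaim follows from the weak non-collapsing hypothesis: by \eqref{eq:fromloctoglob}, $\theta_N<\infty$ $\mm$-a.e., and on the regular stratum $\mathcal{R}_n^*$, where $\mm\ll\haus^n\ll\mm$, standard differentiation of measures yields $\mm(B_r(x))\asymp r^n$ as $r\to 0^+$; finiteness of $\theta_N(x)$ then forces $N\leq n$, and the reverse inequality is automatic. The second subclaim would rest on a sharpened, dimension-sensitive Bochner inequality, morally of the form
\begin{equation}\notag
\Gamma_2(f)\geq K\,\Gamma(f)+|\hess f|^2+\frac{(\Delta f-\tra\hess f)^2}{N-n},
\end{equation}
valid for sufficiently regular $f$ on an RCD$(K,N)$ space of essential dimension $n<N$ (note that minimizing in $a=\tra\hess f$ with the constraint $|\hess f|^2\geq a^2/n$ recovers the standard term $(\Delta f)^2/N$, which is consistent with $\BE(K,N)$). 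With $n=N$, a limiting argument using the inclusion of $\RCD(K,N)$ into $\RCD(K,N')$ for $N'>N$ then forces $\Delta f=\tra\hess f$ $\mm$-a.e., since otherwise the singular term would blow up as $N'\downarrow N$. Once this identity holds on a sufficiently rich algebra of test functions, the auxiliary equivalence gives $\mm=c\haus^N$, and a renormalization completes the proof.

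The main obstacle I anticipate is the nontrivial direction of the auxiliary equivalence itself: converting a second-order identity on test functions into the global measure-theoretic statement $\mm=c\haus^n$. Heuristically this mirrors the weighted Riemannian case, where the discrepancy $\Delta f-\tra\hess f$ has the form $\langle\nabla f,\nabla V\rangle$ for a potential $V$, and its vanishing forces $V$ to be constant, i.e.\ $\mm=c\,\mathrm{Vol}_g$. On an $\RCD$ space, however, one must work without a smooth structure, instead leveraging the second-order calculus and the structure of the regular stratum $\mathcal{R}_n^*$, and I expect this is where most of the paper's technical weight lies. A secondary difficulty is producing the trace identity simultaneously on a class of functions rich enough to feed the auxiliary result, rather than merely $\mm$-a.e.\ for each individual $f$.
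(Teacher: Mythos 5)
Your overall reduction---establish that the essential dimension equals $N$, prove the trace identity $\Delta f=\tra(\hess f)$ on all of $\X$, and invoke Theorem \ref{thm:main} with $U=\X$---is exactly the structure of the paper's proof. The first subclaim is correct and is the content of Theorem \ref{thm:equivnon}, and your identification of where the hard work lives is accurate: the integration-by-parts formula behind Theorem \ref{thm:main} (Theorem \ref{intbyparts}, built from the smoothed metrics $g_t$, the identity $\nabla^*g_t=-\frac{1}{4}\,\d\Delta p_{2t}$, and their asymptotics as $t\to0^+$) occupies most of Section \ref{sec:3}.

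The gap is localized to your derivation of the trace identity. You posit a sharpened Bochner inequality with remainder $\frac{(\Delta f-\tra\hess f)^2}{N-n}$ and run a limit $N'\downarrow N$ using $\RCD(K,N)\subset\RCD(K,N')$; but that inequality is only asserted (``morally of the form''), not proved or precisely cited, so the step that is supposed to yield the trace identity is missing. The paper avoids the issue via Theorem \ref{thm:generalizedhan}, an extension of \cite[Proposition 3.2]{Han14}: on an $\RCD(K,N)$ space of essential dimension $n$, the Bochner inequality with constant $(\Delta f)^2/n$ in place of $(\Delta f)^2/N$ on an open set $U$ is \emph{equivalent} to $\Delta f=\tra(\hess f)$ $\mm$-a.e.\ on $U$. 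Since here $n=N$, the defining Bochner inequality \eqref{eq:bochner} already carries the sharp exponent, so item \ref{item2han} of that theorem gives the trace identity directly, with no dimension-sensitive remainder and no limit in $N'$. Your observation about minimizing in $a=\tra\hess f$ subject to $|\hess f|^2\ge a^2/n$ is in fact the easy direction (item \ref{item2han} implies item \ref{item1han}); the nontrivial converse is proved by following Han's argument, not by a limit in the dimensional parameter. If you were to actually establish your sharpened inequality---which is true in the smooth case and does appear in variants in the RCD literature---your route would close, but as written it does not.
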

Notice that as a consequence of our main result, we obtain that if the Hausdorff dimension of an $\RCD(K,N)$ space is $N$, then also its topological dimension is $N$ (we refer to \cite{Nagata83} for the relevant definitions).
Indeed, under this assumption Theorem \ref{thm:equivnon} and our main result imply that the space is, up to a scalar multiple of the reference measure, a $\ncRCD(K,N)$ space. Then from   the Reifenberg flatness around a regular point (see \cite{Cheeger-Colding97I} and then \cite{DPG17}, \cite{KapMon19}) we see that   any regular point has a neighbourhood which is homeomorphic to $\mathbb{R}^N$. This proves that the topological dimension is at least $N$ and since in  general this   is at most the Hausdorff one (see e.g.\ \cite[Theorem 8.14]{Heinonen01}), our claim is proved.

\subsection{Strategy of the proof}

The basic strategy we adopt in proving this theorem is the one introduced by the third named author in \cite{H19} to handle the compact case. Still, moving from compact to non-compact creates additional technical complications that must be handled: one of the things we do here is to replace the approximation of the heat kernel via eigenfunctions - used in \cite{H19} - with suitable decay estimates based on Gaussian bounds. Also, in the course of the proof we obtain (by making explicit some ideas that were implicitly used in \cite{H19}) interesting intermediate results that are new even in the smooth context, see in particular formula \eqref{eq:gtdiv}. Finally, on general $\RCD$ spaces $\X$ of essential dimension $n$ and $U\subset\X$ open we establish relevant links between the properties 
\begin{itemize}
\item[-] $\mathrm{tr}(\mathrm{Hess}f)=\Delta f$ on $U\subset\X$ for every $f$ sufficiently regular,
\item[-] $\mm=c\haus^n$ on $U\subset\X$ for some $c>0$,
\end{itemize} 
see Theorem \ref{thm:main} below for the precise statement.

\bigskip

With this said, let us describe the main idea by having once again a look at the case of a weighted $n$-dimensional Riemannian manifold $(M, g, e^{-V}\di \mathrm{Vol}_g)$. Let us consider the reference measure $\mm:=e^{-V} \mathrm{Vol}_g$ and the Hausdorff measure $\haus^n= \mathrm{Vol}_g$. Assume that ${\rm Ric}_N\geq Kg$  for some $K \in \mathbb{R}$ and some $N \in [n, \infty)$ (namely $(M, \dist_g, \mass)$ is an $\RCD(K, N)$ space, recall \eqref{eq:beric} and \eqref{eq:bekn}). Now notice that the following integration by parts formulas hold: for every $f,\phi\in C^\infty_c(M)$ we have

\begin{subequations}
\begin{align}
\label{eq:intparts1}
-\int_M \langle\d f,\d\phi\rangle\,\d\mm&=\int_M \phi\,\Delta f\,\d\mm,\\
\label{eq:intparts2}
-\int_M \langle\d f,\d\phi\rangle\,\d\haus^n&=\int_M \phi\,\mathrm{tr}(\mathrm{Hess}f)\,\d\haus^n.
\end{align}
\end{subequations}
From these identities it is easy to conclude that
\begin{equation}\label{eq:trHD}
\mm=c\haus^n\qquad\Leftrightarrow\qquad \mathrm{tr}(\mathrm{Hess}f)=\Delta f\quad\forall f\in C^\infty_c(M).
\end{equation}
Thus recalling \eqref{eq:fromloctoglob} we see that the desired result will follow if we can show that 
\[
\text{$\theta_N[M,\sfd_g,\mm]<\infty$ a.e.\ implies that $\mathrm{tr}(\mathrm{Hess}f)=\Delta f$ for any smooth function $f$}.
\]
To see this recall that, as already noticed, having $\theta_N[M,\sfd_g,\mm](x)<\infty$ for some point $x\in M$ implies that $M$ is $N$-dimensional (and thus in particular that $N$ is an integer), then recall \eqref{eq:bekn} and the definition \eqref{eq:beric} of the $N$-Ricci curvature tensor.

This establishes the claim in the smooth setting. In the general case we follow the same general ideas, but we have to deal with severe technical complications. Start observing that the analogue of \eqref{eq:intparts1} holds in general $\RCD$ spaces by the very definition of $\Delta$ (see \cite{Gigli12}) and that the line below \eqref{eq:trHD} is known. 
Thus to conclude along the lines above it is sufficient to prove that \eqref{eq:intparts2} holds on $\RCD(K,N)$ spaces of essential dimension $n$. We do not have exactly such result, but have instead the following result which is anyway sufficient to conclude:
\begin{theorem}\label{le:keyintro}
Let $(\X,\sfd,\mm)$ be an $\RCD(K,N)$ space of essential dimension $n$. Let $U\subset\X$ be bounded open and assume that
\begin{equation}
\label{eq:assU}
\inf_{r\in(0,1),x\in U}\frac{\mm(B_r(x))}{r^n}>0.
\end{equation}
Then for every $\phi\in \Lip(\X, \dist)$, $f\in D(\Delta)$ with $\supp(\phi),\supp(f)\subset U$ formula \eqref{eq:intparts2} holds.
\end{theorem}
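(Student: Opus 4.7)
The starting observation is that \eqref{eq:assU}, combined with the essential-dimension property $\mm\ll\haus^n\ll\mm$ on $\mathcal{R}_n^*$ quoted in the introduction and with the upper density bound valid for any Radon representative of $\haus^n$, forces $\mm\res U=g\,\haus^n\res U$ for a Borel function $g=\omega_n\,\theta_n[\X,\dist,\mm]$ that is bounded above and bounded away from zero on $U$. Heuristically, the weighted identity
\[
\tra(\hess f)=\Delta f-\langle\nabla\log g,\nabla f\rangle\qquad \text{on }U,
\]
combined with \eqref{eq:intparts1}, would produce \eqref{eq:intparts2} immediately; the issue is that $g$ is only Borel a priori, so $\nabla \log g$ is not directly meaningful.

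To circumvent this, I would follow the heat-flow regularization strategy anticipated in the introduction as the substitute for the eigenfunction expansion of \cite{H19}. The steps are: (i) mollify $f\in D(\Delta)$ via the heat semigroup, setting $f_\eps\defeq h_\eps f$, so that both $\Delta f_\eps$ and $\tra(\hess f_\eps)$ are pointwise objects with good integrability, with the convergence $f_\eps\to f$ controlled in $W^{1,2}$-type norms through the two-sided Gaussian bounds on the $\RCD(K,N)$ heat kernel; (ii) at fixed $\eps>0$, derive an intermediate divergence-type representation of $\tra(\hess f_\eps)$ with respect to $\haus^n$ — this is presumably the content of the intrinsic formula \eqref{eq:gtdiv} announced in the strategy section — from which an integration by parts against a Lipschitz $\phi$ supported in $U$ yields the analogue of \eqref{eq:intparts2} for $f_\eps$; (iii) let $\eps\to 0^+$, using the Gaussian decay estimates to justify the convergence and exploiting the two-sided density estimate on $g$ to freely interchange integration against $\mm$ with integration against $\haus^n$ on $U$.

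The main obstacle is clearly step (ii), the divergence representation of $\tra(\hess)$ against $\haus^n$. In a smooth weighted model this reduces to recognizing $-\nabla\log g$ as the drift of the weighted Laplacian, but the lack of a priori Sobolev regularity of $g=\omega_n\theta_n$ prevents a literal application of that formula. The uniform lower density bound \eqref{eq:assU} is essential here: it guarantees that $g$ stays bounded below everywhere on $U$ (not merely $\mm$-a.e.), which is what makes the mollification $f_\eps$ robust enough to push the weighted identity through without requiring gradient regularity of $g$ itself, and which provides the uniform control needed to pass to the limit in step (iii). Once the divergence representation at level $\eps>0$ is secured, \eqref{eq:intparts2} follows by a standard approximation argument combining \eqref{eq:intparts1}, the density equivalence on $U$, and the Gaussian decay estimates.
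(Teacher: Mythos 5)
Your heuristic derivation of the smooth-setting identity $\tra(\hess f)=\Delta f-\langle\nabla\log g,\nabla f\rangle$ is correct, and you correctly diagnose the obstacle: the density $g=\d\mm/\d\haus^n$ is only Borel, so $\nabla\log g$ has no a priori meaning. But your proposed remedy — mollifying $f$ via $f_\eps=\heat_\eps f$ — does not address this obstacle, and your identification of what \eqref{eq:gtdiv} does is incorrect. In the paper, $g_t$ is not a mollified function; it is a smoothed \emph{metric tensor}, defined by pulling back the flat $L^2$ metric through the map $x\mapsto p(x,\plchldr,t)$, i.e.\ $g_t=\int_\XX \d p_{y,t}\otimes\d p_{y,t}\,\d\mm(y)$. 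Formula \eqref{eq:gtdiv} reads $\nabla^*g_t=-\tfrac14\d\Delta p_{2t}$: it is the codifferential of this metric tensor, not a divergence representation of $\tra(\hess f_\eps)$.

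This distinction is not cosmetic. Mollifying $f$ cannot help, because $f\in D(\Delta)$ was already regular enough for both $\Delta f$ and $\hess f$ (and hence $\tra(\hess f)$) to be well-defined $L^2$ objects; the difficulty was never the regularity of $f$. Consequently your step (ii) — ``derive an intermediate divergence-type representation of $\tra(\hess f_\eps)$ with respect to $\haus^n$'' — is precisely the original problem restated, and you offer no mechanism for proving it that is any easier for $f_\eps$ than for $f$. What actually breaks the impasse in the paper is a different idea: the rescaled smoothed metrics $\omega_n t^{(n+2)/2}g_t$ converge in $L^p_\loc$ to $c_n\frac{\d\haus^n}{\d\mm}\,g$ (Corollary \ref{cor:convgt}, which uses \eqref{eq:assU} through Lemma \ref{le:densh}), while by \eqref{eq:gtdiv} and Gaussian estimates $t^{(n+2)/2}\nabla^*g_t=-\tfrac14 t^{(n+2)/2}\d\Delta p_{2t}\to0$ (Corollary \ref{cor:hhhss}). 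Pairing $g_t$ against $\nabla(\phi\,\d f)=\d\phi\otimes\d f+\phi\,\hess f$ and letting $t\to0^+$ then yields exactly \eqref{eq:intparts2}, with the density $\frac{\d\haus^n}{\d\mm}$ appearing in the limit of $g_t$ rather than being differentiated. In short: the heat flow regularizes the Riemannian metric, not the function, and this is what allows one to avoid ever needing Sobolev regularity of the density.
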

See Theorem \ref{intbyparts} for a slightly sharper statement. Notice also that by the Bishop-Gromov inequality, assumption \eqref{eq:assU} holds trivially with $n=N$ for any bounded subset $U$ of a weakly non-collapsed $\RCD$ space. Also, the statement above is interesting regardless of the application we just described, and valid also in possibly `collapsed' $\RCD$ spaces.

Thus everything boils down to the proof of such result, and indeed from both the technical and conceptual perspective this is the most important part of our paper. The basic idea for the proof is to perform a smoothing of the metric tensor via heat flow. Let us describe the procedure, introduced in \cite{Berard:1994vy}, in the smooth setting. Consider a compact smooth Riemannian manifold $(M, g,\di \mathrm{Vol}_g)$ and, for every $t>0$, let $\Phi_t:M\to L^2(M,\mathrm{Vol}_g)$ be defined as
\[
\Phi_t(x):=\left( y \mapsto p(x, y, t)\right),
\] 
where $p$ is the heat kernel. We can use this map to pull-back the flat metric $g_{L^2}$ of $L^2(M,\mathrm{Vol}_g)$ and obtain the metric tensor $g_t:=\Phi_t^*g_{L^2}$ that is explicitly given by
\begin{equation}
\label{eq:defgt}
g_t=\int_{M}\di p( \cdot, y, t) \otimes \di p( \cdot, y, t)\di \mathrm{Vol}_g(y) \in C^{\infty}((T^*)^{\otimes 2}M).
\end{equation}
The interesting fact is that, after appropriate rescaling, the tensors $g_t$ converge to the original one $g$. More precisely, we have
\begin{equation}
\label{eq:unifconv}
\|4(8\pi)^{n/2}t^{(n+2)/2}g_t-g\|_{L^{\infty}} \to 0\qquad\text{as}\ t\to 0^+,
\end{equation}
where $n$ denotes the dimension of $M$.
In fact in \cite{Berard:1994vy} more is proved, as it is provided the first order Taylor expansion of $t^{(n+2)/2}g_t$, but this is not relevant for our application. A way to read  this convergence is via the stability of the heat flow under measured-Gromov-Hausdorff convergence of spaces with Ricci curvature uniformly bounded from below; this observation is more recent than \cite{Berard:1994vy}, as it has been made  by the second author in \cite{Gigli10},  still, this is the argument used in the $\RCD$ setting so let us present this viewpoint. It is clear that for $M=\R^n$ the tensor $g_t$ is just a rescaling of the Euclidean tensor. On the other hand, denoting by $M^\lambda$ the manifold $M$ equipped with the rescaled metric tensor $\lambda g$, and by $p^\lambda$ the associated heat kernel, it is also clear that $p(x,y,t)=p^\lambda(x,y,\lambda^{-1}t)$. Thus the asymptotics of $p(x,y,t)$ as $t\to0^+$ corresponds to that of $p^\lambda(x,y,1)$ as $\lambda\to\infty$ and, as said, these kernels converge to the Euclidean ones where the evolution of the metric tensors $g_t$ is trivial.

Coming back to the $\RCD$ setting, we recall that the heat kernel is well-defined in this context \cite{Ambrosio_2014}, and a differential calculus is available in this framework \cite{Gigli14}. Thus the same definition as in \eqref{eq:defgt} can be given and one can wonder whether the same convergence result as in \eqref{eq:unifconv} holds. Interestingly, in this case one has
\begin{equation}
\label{eq:lpconv}
\|t\mm(B_{\sqrt t}(\cdot))g_t-c_n g\|_{L^{p}_{loc}} \to 0\qquad\text{as}\ t\to 0^+,\ \forall p\in[1,\infty)
\end{equation}
for some constant $c_n$ depending only on the essential dimension of $\X$ (this has been proved in \cite{AHPT} for compact $\RCD(K,N)$ spaces, and is generalized here to the non-compact setting). Notice that the loss from convergence in $L^\infty$ to convergence in $L^p_{loc}$ is unavoidable, but unharmful for our purposes. It is important to remark that the factor in front of $g_t$ is now not constant anymore: this has to do with Gaussian gradient estimates for the heat kernel. Now let $U\subset\X$ be open bounded and assume that  $\haus^n$ is a Radon measure on $U$ (this is always the case if \eqref{eq:assU} holds). In this case by standard results about differentiations of measures we have
\[
\lim_{t\to 0^+}\frac{t\mm(B_{\sqrt t}(\cdot))}{t^{\frac{n+2}2}}=c'_n\frac{\d\mm}{\d\haus^n}\qquad\mm\text{-a.e.\ }on\ U.
\]
Thus if \eqref{eq:assU} holds, from \eqref{eq:lpconv} we deduce that 
\begin{equation}
\label{eq:lpconv2}
\|t^{\frac{n+2}2}g_t -c_n''\tfrac{\d\haus^n}{\d\mm} g\|_{L^p(U)}\to 0\qquad\text{as}\ t\to 0^+.
\end{equation}
We couple this information with the following explicit computation of the adjoint  $\nabla^*$ of the covariant derivative of $g_t$:
\begin{equation}\label{eq:gtdiv}
\nabla^*g_t(x)=-\frac{1}{4}\di_x\Delta_xp(x, x, 2t).
\end{equation}
This formula was obtained in \cite{H19} in the compact setting by expanding the heat kernel via eigenfunctions of the Laplacian. This approach does not work in our current framework and we will rather proceed via a somehow more direct approach based on `local' Bochner integration (see Section \ref{subse:local}).

We are almost done: by explicit computations based on Gaussian estimates one can see that
\[
t^{\frac{n+2}2} \di_x\Delta_xp(x, x, 2t)\weakto 0\qquad\text{as}\ t\to 0^+
\]
in a suitable sense, thus coupling this information with \eqref{eq:gtdiv}, \eqref{eq:lpconv2} and the closure of $\nabla^*$ we conclude that
\[
\nabla^*(\tfrac{\d\haus^n}{\d\mm} g)=0\qquad\text{in }U.
\]
This latter equation is a restatement of \eqref{eq:intparts2} for $f,\phi$ with support in $U$, i.e.\ this argument gives Theorem \ref{le:keyintro}, as desired.

We conclude emphasizing that our proof also yields the following result, which is of independent interest and will play a prominent role in the proof of Theorem \ref{thm:mainres}.

\begin{theorem}\label{thm:main}
Let $(\XX,\dist,\mass)$ be an $\RCD(K,N)$ space of essential dimension $n$, and let $U$ be a connected open subset of $\XX$ with 
    \begin{equation}\label{main:uniformbound}
        \inf_{r \in (0, 1),x \in A}\frac{\meas(B_r(x))}{r^n}>0
    \end{equation} 
for any compact subset $A\subseteq U$. Then the following two conditions are equivalent:
\begin{enumerate}
    \item\label{thm:item1} for every $f\in D(\Delta)$,
\[
        \Delta f=\tra(\hess f)\quad\text{$\mass$-a.e.\ on $U$};
\]
    \item\label{thm:item2} for some $c\in (0,\infty)$,
\[
        \mass\mres U=c\haus^n\mres U.
\]
    \end{enumerate}
\end{theorem}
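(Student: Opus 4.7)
The plan is to use Theorem \ref{intbyparts} (the sharp form of Theorem \ref{le:keyintro}) together with the defining integration-by-parts identity for $\Delta$ as the bridge between the two conditions. Assumption \eqref{main:uniformbound} combined with the Bishop-Gromov inequality guarantees that, on any relatively compact open subset of $U$, one has $c_1r^n\leq \mass(B_r(x))\leq c_2r^n$ uniformly in $x$ for small $r>0$; hence $\haus^n\mres U$ is locally Radon and comparable to $\mass\mres U$, and the density $\rho:=\d\haus^n/\d\mass$ is positive, locally bounded above and below away from zero $\mass$-a.e.\ on $U$.

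Direction $\ref{thm:item2}\Rightarrow\ref{thm:item1}$ is immediate: exhausting $U$ by bounded open subsets to which Theorem \ref{intbyparts} applies, the identity $\mass\mres U=c\haus^n\mres U$ turns the $\haus^n$-integration by parts formula into $-\int\langle\d f,\d\phi\rangle\,\d\mass=\int\phi\,\tra(\hess f)\,\d\mass$ for $\phi\in\Lip_c(U)$ and suitably regular $f\in D(\Delta)$ supported in $U$. Comparison with the defining identity $-\int\langle\d f,\d\phi\rangle\,\d\mass=\int\phi\,\Delta f\,\d\mass$ and the arbitrariness of $\phi$ give $\Delta f=\tra(\hess f)$ $\mass$-a.e.\ on $U$; the extension to all of $D(\Delta)$ is by cutoff and density of the Hessian class.

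The substance of the proof is the reverse direction $\ref{thm:item1}\Rightarrow\ref{thm:item2}$, whose goal is to show that $\rho$ is locally constant on $U$ (hence constant by connectedness of $U$). Rewriting Theorem \ref{intbyparts} through $\d\haus^n=\rho\,\d\mass$ and substituting $\tra(\hess f)=\Delta f$ from \ref{thm:item1} yields
\[
\int(\phi\rho)\,\Delta f\,\d\mass=-\int\rho\,\langle\d f,\d\phi\rangle\,\d\mass\qquad(\ast)
\]
for admissible test pairs. Granted the regularity $\rho\in W^{1,2}_{\loc}(U)$, the defining identity of $\Delta$ applied with the test function $\phi\rho$ produces a second expression for the left-hand side of $(\ast)$ through the Leibniz rule $\d(\phi\rho)=\phi\,\d\rho+\rho\,\d\phi$; subtracting the two forces $\int\phi\,\langle\d f,\d\rho\rangle\,\d\mass=0$, and by varying $\phi$ and $f$ (using that gradients of compactly supported Laplacian-class functions generate the cotangent module on $U$) one obtains $\d\rho=0$ $\mass$-a.e.\ on $U$.

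The chief obstacle is therefore establishing the a priori regularity $\rho\in W^{1,2}_{\loc}(U)$, since $\rho$ is to begin with only a bounded Borel function. My approach is to smooth $\rho$ via the heat semigroup $\{\heat_t\}_{t>0}$ of $(\XX,\dist,\mass)$: the approximants $\rho_t:=\heat_t(\chi\rho)$, with $\chi$ a Lipschitz cutoff supported near $\supp\phi\subset U$, are Lipschitz, satisfy the Leibniz rule exactly, and converge to $\rho$ in $L^2_{\loc}(U)$. Inserting $\phi\rho_t$ as a legitimate test function in the defining identity of $\Delta$, comparing with $(\ast)$, and passing to the limit $t\to 0^+$ using standard gradient estimates for $\heat_t$ together with the closure of $\d$ should yield both the Sobolev regularity of $\rho$ and the vanishing of $\d\rho$ on $U$. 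This regularization step is where the bulk of the technical work lies.
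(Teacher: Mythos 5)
Your overall route matches the paper's: apply Theorem \ref{intbyparts} as the bridge, dispose of $\ref{thm:item2}\Rightarrow\ref{thm:item1}$ immediately, reduce $\ref{thm:item1}\Rightarrow\ref{thm:item2}$ to showing $\rho:=\d\haus^n/\d\mass$ is locally constant on $U$, and prove this by heat-flow regularization $\rho_t:=\heat_t(\chi\rho)$. This is exactly what the paper does (Lemma \ref{prop:main}, with $\xi=\chi_U\d\haus^n/\d\mass$). The final algebraic step — using $(\ast)$ together with the defining identity for $\Delta$ once $\rho\in H^{1,2}_{loc}$ to force $\langle\nabla\rho,\nabla f\rangle=0$ and hence $\nabla\rho=0$ — also matches. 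Two remarks, one minor and one substantial.

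Minor: \eqref{main:uniformbound} together with Bishop--Gromov gives only a \emph{lower} bound $\mass(B_r(x))\geq c_1 r^n$, not a two-sided comparison $c_1 r^n\leq\mass(B_r(x))\leq c_2 r^n$; Bishop--Gromov bounds ratios $\mass(B_R)/\mass(B_r)$ from above, hence bounds $\mass(B_r)$ from \emph{below}, never from above. Consequently one gets $\rho\in L^\infty_{loc}(U,\mass)$ (which is what the paper uses) but not a lower bound on $\rho$. This is harmless here since only the upper bound enters, but the assertion as written is unjustified.

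Substantial: the regularization step as you describe it does not close. You propose to ``insert $\phi\rho_t$ as a test function in the defining identity of $\Delta$, compare with $(\ast)$, and pass to the limit using standard gradient estimates for $\heat_t$.'' The comparison yields, for each fixed $f\in D(\Delta)$,
\begin{equation}\notag
\int\phi\langle\d\rho_t,\d f\rangle\,\d\mass=-\int\phi(\rho_t-\rho)\Delta f\,\d\mass-\int(\rho_t-\rho)\langle\d\phi,\d f\rangle\,\d\mass\longrightarrow 0,
\end{equation}
but this pointwise-in-$f$ vanishing gives no $t$-uniform control on $\|\phi\,\d\rho_t\|_{L^2}$, and the standard semigroup gradient estimate $\|\,|\d\heat_tg|\,\|_{L^2}\leq\|g\|_{L^2}/\sqrt t$ blows up as $t\to 0^+$. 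The paper's argument is genuinely sharper at precisely this point: it plugs the heat kernel itself, $f=p_{y,t}$, into the hypothesis $(\ast)$. Combined with the symmetry $\Delta_xp(x,y,t)=\Delta_yp(x,y,t)$ and Hille's theorem this produces an explicit formula
\begin{equation}\notag
\Delta\rho_t(y)=\int(\chi_{B_{2r}(z)}-\psi)\,\xi\,\Delta p_{y,t}\,\d\mass-\int\xi\,\langle\nabla\psi,\nabla p_{y,t}\rangle\,\d\mass,
\end{equation}
whose integrands are supported in the annulus $B_{3r}(z)\setminus B_{2r}(z)$, at distance $\geq r$ from any $y\in B_r(z)$. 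Gaussian bounds on $\Delta p_{y,t}$ and $\nabla p_{y,t}$ then give $\Delta\rho_t\to 0$ \emph{uniformly} on $B_r(z)$; feeding this into the integration-by-parts identity $\int|\d(\phi\rho_t)|^2\,\d\mass=\int|\rho_t|^2|\d\phi|^2-\phi^2\rho_t\Delta\rho_t\,\d\mass$ controls the Cheeger energy of $\phi\rho_t$ uniformly in $t$, whence $\phi\rho\in H^{1,2}$ by lower semicontinuity. That choice of test function $f=p_{y,t}$ — turning the hypothesis into a statement about $\Delta\rho_t$ localized in an annulus — is the device your proposal is missing; without it the regularization step stalls.
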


Notice that this has nothing to do with non-collapsing properties and, in particular, it can very well be that the assumption \eqref{main:uniformbound} holds for $U=\X$. Moreover items \ref{thm:item1} and \ref{thm:item2} may hold only on some $U\subsetneq \X$: just consider the case of a weighted Riemannian manifold as before with $V$ constant on $U$ but non-constant outside $U$.
\subsection{Applications}
The following applications seem to be already known to experts if Theorem \ref{thm:mainres} is established (for instance \cite{KapMon19} and \cite{honda2019sphere}).
However for readers' convenience let us give them precisely. Roughly speaking, they are based on a fact that the space of weakly non-collapsed spaces is open in the space of $\RCD(K, N)$ spaces because of the lower semicontinuity of the essential dimensions with respect to pointed measured Gromov-Hausdorff convergence proved in \cite{kitabeppu2017sufficient} (Theorem \ref{thm:loweress}). 

It is known that pointed Gromov-Hausdorff (pGH) and pointed measured Gromov-Hausdorff (pmGH) convergences are metrizable (see for instance in \cite{GMS15}). Thus `$\epsilon$-pGH close'  and `$\epsilon$-pmGH close' make sense as appeared in the following theorem. 
Note that as the sequential compactness of $\RCD(K, N)$ spaces is known (Theorem \ref{thm:moduli}), any such metric determines the same compact topology.

The first application is stated as follows.
\begin{theorem}\label{thm:close}
For any $K \in \mathbb{R}$, any $N \in \mathbb{N}$, any $\delta\in (0,\infty)$ and any $v \in (0, \infty)$, there exists $\epsilon:=\epsilon (K, N, \delta, v) \in (0, 1)$ such that if a pointed $\RCD(K, N)$ space $(\XX, \dist, \meas, x)$ is so  that $(\XX, \dist, x)$ is $\epsilon$-${\rm pGH}$ close to $(\YY, \dist_{\YY}, y)$ for some non-collapsed $\RCD(K, N)$ space $(\YY, \dist_{\YY}, \haus^N)$ with 
\begin{equation}\label{eq:volumenot}
\haus^N(B_1(y))\ge v, 
\end{equation}
then we have $\meas=c\haus^N$ for some $c \in (0, \infty)$, and moreover $\abs{\haus^N(B_1(x))-\haus^N(B_1(y))}< \delta$. 
\end{theorem}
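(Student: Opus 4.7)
I would argue by contradiction along a standard compactness scheme. Assume the theorem fails for some fixed $K,N,\delta,v$: then there is a sequence of pointed $\RCD(K,N)$ spaces $(\XX_i,\dist_i,\meas_i,x_i)$ and non-collapsed $\RCD(K,N)$ spaces $(\YY_i,\dist_{\YY_i},\haus^N,y_i)$ with $(\XX_i,\dist_i,x_i)$ being $(1/i)$-pGH close to $(\YY_i,\dist_{\YY_i},y_i)$ and $\haus^N(B_1(y_i))\geq v$, yet such that for each $i$ either $\meas_i$ is not a constant multiple of $\haus^N$ or $|\haus^N(B_1(x_i))-\haus^N(B_1(y_i))|\geq\delta$. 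By the volume convergence theorem for non-collapsed $\RCD$ spaces (the $\RCD$ counterpart of Colding's theorem, following the De~Philippis--Gigli framework), up to subsequences the sequence $(\YY_i,\dist_{\YY_i},\haus^N,y_i)$ converges in pmGH sense to a pointed non-collapsed $\RCD(K,N)$ space $(\YY_\infty,\dist_\infty,\haus^N,y_\infty)$ with $\haus^N(B_1(y_\infty))\geq v$; in particular this limit has essential dimension $N$.

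To handle the $(\XX_i,\dist_i,\meas_i,x_i)$, renormalize the measures as $\tilde\meas_i\defeq\meas_i/\meas_i(B_1(x_i))$; the $(1/i)$-pGH closeness to $\YY_i$ together with Bishop--Gromov ensures that the volumes $\tilde\meas_i(B_R(x_i))$ are uniformly controlled for every $R>0$. Applying the pmGH compactness of the class of pointed $\RCD(K,N)$ spaces (Theorem~\ref{thm:moduli}), we extract a further subsequence converging in pmGH sense to a pointed $\RCD(K,N)$ space whose metric structure coincides with $(\YY_\infty,\dist_\infty,y_\infty)$. The lower semicontinuity of the essential dimension under pmGH convergence (Theorem~\ref{thm:loweress}) now gives
\[
N=\dim_{\rm ess}(\YY_\infty)\leq\liminf_i\dim_{\rm ess}(\XX_i,\dist_i,\meas_i)\leq N,
\]
so $\dim_{\rm ess}(\XX_i,\dist_i,\meas_i)=N$ for every sufficiently large $i$. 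By the structural results recalled in the introduction, this forces $\theta_N[\XX_i,\dist_i,\meas_i]<\infty$ $\meas_i$-a.e., i.e.\ $(\XX_i,\dist_i,\meas_i)$ is weakly non-collapsed, and Theorem~\ref{thm:mainres} (already proved in the paper) yields $\meas_i=c_i\haus^N$ for some $c_i\in(0,\infty)$. This rules out the first alternative.

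For the second alternative, we now know that each $(\XX_i,\dist_i,c_i^{-1}\meas_i,x_i)=(\XX_i,\dist_i,\haus^N,x_i)$ is itself non-collapsed (for all $i$ large) and pGH-converges to the non-collapsed limit $(\YY_\infty,\dist_\infty,\haus^N,y_\infty)$; the volume convergence theorem for non-collapsed $\RCD$ spaces therefore yields $\haus^N(B_1(x_i))\to\haus^N(B_1(y_\infty))=\lim_i\haus^N(B_1(y_i))$, contradicting $|\haus^N(B_1(x_i))-\haus^N(B_1(y_i))|\geq\delta$. The main obstacle is the passage from the pGH-closeness hypothesis (which carries no measure information on $\XX_i$) to genuine pmGH convergence: this is arranged through the renormalization and the Bishop--Gromov-based uniform volume bounds, at which point the lower semicontinuity of the essential dimension brings Theorem~\ref{thm:mainres} into play, and the stability of non-collapsed spaces under pGH convergence (with Hausdorff measure passing to the limit) closes the argument.
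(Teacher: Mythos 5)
Your argument is correct and follows essentially the same compactness-by-contradiction scheme as the paper: renormalize $\meas_i$, use Theorem~\ref{thm:moduli} to extract a pmGH limit of the $\XX_i$ whose metric agrees with the pGH limit of the $\YY_i$, invoke Theorem~\ref{thm:pGH} to see that the limit is weakly non-collapsed, conclude $\mathrm{essdim}(\XX_i)=N$ via Theorem~\ref{thm:loweress}, and apply Theorem~\ref{thm:mainres}. The one presentational difference is in the volume estimate: the paper disposes of $\abs{\haus^N(B_1(x))-\haus^N(B_1(y))}<\delta$ immediately via the continuity of $\haus^N$ on $\RCD(K,N)$ unit balls (Theorem~\ref{thm:conthN}), which requires no non-collapsedness of $\XX$, whereas you fold it into the contradiction argument after first showing $\meas_i=c_i\haus^N$; the latter works but is a detour, since Theorem~\ref{thm:conthN} already applies to arbitrary $\RCD(K,N)$ spaces.
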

Next application shows that the non-collapsed condition can be recognized from the point of an infinitesimal view. 
\begin{theorem}\label{thm:tangcha}
Let $(\XX,\dist,\mass)$ be an $\RCD(K,N)$ space. If the essential dimension of some tangent cone $(\YY, \dist_{\YY}, \meas_Y, y)$ at some point $x \in \XX$ is equal to $N$, then $\meas=c\haus^N$ for some $c \in (0, \infty)$.
\end{theorem}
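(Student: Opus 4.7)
The plan is to combine the main theorem (Theorem \ref{thm:mainres}) with the stability result (Theorem \ref{thm:close}), exploiting the fact that the non-collapsing condition is essentially scale-invariant.

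First I would realize the tangent cone $(\YY,\dist_\YY,\meas_Y,y)$ as a pointed measured Gromov-Hausdorff limit of rescalings $(\XX,r_i^{-1}\dist,\meas_{r_i}^x,x)$ with $r_i\downarrow 0$, where $\meas_{r_i}^x$ is obtained from $\meas$ by the usual renormalization preserving unit mass on a fixed base ball. Each such rescaling is an $\RCD(r_i^2 K,N)$ space, so for $r_i\leq 1$ they all belong to $\RCD(K^-,N)$ with $K^-:=\min\{K,0\}$, and in particular $(\YY,\dist_\YY,\meas_Y)$ is $\RCD(0,N)$. The hypothesis that the essential dimension of $\YY$ equals $N$ means that $\meas_Y$ is concentrated on the regular stratum $\RR_N^*$ where the tangent is $\meas_Y$-a.e.\ Euclidean $\R^N$; this forces $\theta_N[\YY,\dist_\YY,\meas_Y]\in(0,\infty)$ $\meas_Y$-a.e., i.e.\ $\YY$ is weakly non-collapsed. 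Applying Theorem \ref{thm:mainres} to $\YY$ then yields $\meas_Y=c_Y\haus^N$ for some $c_Y\in(0,\infty)$.

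Next I would set $v:=\tfrac12\haus^N(B_1(y))$, which is strictly positive because the pmGH normalization ensures $\meas_Y(B_1(y))>0$ and $\meas_Y=c_Y\haus^N$. Fix any $\delta>0$ and let $\epsilon:=\epsilon(K^-,N,\delta,v)$ be the threshold furnished by Theorem \ref{thm:close}. Since pmGH convergence entails pGH convergence of the underlying pointed metric spaces, for $i$ large enough $(\XX,r_i^{-1}\dist,x)$ is $\epsilon$-pGH close to the non-collapsed $\RCD(K^-,N)$ space $(\YY,\dist_\YY,\haus^N,y)$, and the volume bound $\haus^N(B_1(y))\geq v$ holds by construction. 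Theorem \ref{thm:close} then applies to the rescaled space $(\XX,r_i^{-1}\dist,\meas_{r_i}^x,x)$ and produces $\meas_{r_i}^x=c_i\haus^N$ (computed in the rescaled metric) for some $c_i\in(0,\infty)$. Unscaling, using that Hausdorff measure scales by the factor $r_i^{-N}$ and that $\meas_{r_i}^x$ is a positive multiple of $\meas$, yields $\meas=c\haus^N$ on $\XX$, as desired.

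The only subtle point is producing the quantitative lower bound $\haus^N(B_1(y))\geq v>0$ required to invoke Theorem \ref{thm:close}: this is precisely where the identification $\meas_Y=c_Y\haus^N$ coming from Theorem \ref{thm:mainres} is indispensable, since without it we would only control $\meas_Y(B_1(y))$ and would have no means to transfer that information into a Hausdorff-measure lower bound. Once that bound is in hand, the parameters in the stability statement are fixed once and for all, and the limit $i\to\infty$ completes the argument.
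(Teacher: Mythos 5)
Your argument is correct, but it takes a genuinely different route from the paper's. The paper's proof is shorter and entirely qualitative: it observes that the essential dimension is invariant under the rescaling in the LHS of \eqref{eq:ulla_ulla}, so by the lower semicontinuity of essential dimension under pmGH convergence (Theorem \ref{thm:loweress}) one gets $\mathrm{essdim}(\XX)\geq\mathrm{essdim}(\YY)=N$; since $\mathrm{essdim}(\XX)\leq N$ always, equality holds, and then Theorem \ref{thm:equivnon} plus Theorem \ref{thm:mainres} applied directly to $\XX$ finishes. Your argument instead first applies Theorem \ref{thm:mainres} to the tangent cone $\YY$ to identify $\meas_Y=c_Y\haus^N$, then invokes the quantitative stability result Theorem \ref{thm:close} to pull the non-collapsing condition back onto the rescaled $\XX$'s, and finally unscales. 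This is a valid strategy---you correctly handle the scale-dependence of the $\RCD$ constants by passing to $K^-=\min\{K,0\}$, and the volume bound $\haus^N(B_1(y))\geq v$ you manufacture from the tangent cone's non-collapsedness is exactly what is needed to invoke Theorem \ref{thm:close}. What the paper's route buys is economy: one never needs the quantitative $\epsilon$-closeness machinery, nor the volume lower bound, nor the unscaling computation, because the lower semicontinuity of $\mathrm{essdim}$ (which also underlies the proof of Theorem \ref{thm:close} itself) applies directly to the original space. What your route highlights, as a byproduct, is that the tangent cone in question is itself a non-collapsed $\RCD(0,N)$ space, a fact the paper records separately via the converse remark after the theorem statement.
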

Note that the converse implication also holds in Theorem \ref{thm:tangcha}, namely if $(\XX,\dist,\haus^N)$ is a non-collapsed $\RCD(K, N)$ space, then any tangent cone at any point is also a pointed non-collapsed $\RCD(0, N)$ space (see Theorem \ref{thm:pGH}).

The following final application shows that the non-collapsed condition can be also recognized from the asymptotical point of view. Note that the LHS of \eqref{eq:fromfareq} exists by the Bishop-Gromov inequality, and does not depend on the choice of $x\in\XX$.

\begin{theorem}\label{thm:fromfar}
Let $(\XX,\dist,\mass)$ be an $\RCD(0,N)$ space and assume that 
\begin{equation}\label{eq:fromfareq0}
	\sup_{x\in\XX} \mass(B_1(x))<\infty
\end{equation}
and that for some (hence all) $x\in\XX$
\begin{equation}\label{eq:fromfareq}
	\lim_{r\rightarrow\infty}\frac{\mass(B_r(x))}{r^N}>0.
\end{equation}
Then $\meas=c\haus^N$ for some $c \in (0, \infty)$.
\end{theorem}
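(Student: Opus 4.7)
The plan is to apply Theorem~\ref{thm:mainres} twice: first to an asymptotic cone $\YY$ of $\XX$, which I will show is non-collapsed, and then to $\XX$ itself after transferring the essential dimension back by lower semicontinuity. Set $\Theta := \lim_{r \to \infty} \meas(B_r(x))/r^N > 0$, which by~\eqref{eq:fromfareq} is basepoint-independent. Pick any $x \in \XX$, let $r_k \to \infty$, and consider the rescaled pointed $\RCD(0,N)$ spaces $(\XX, \dist_k, \meas_k, x)$ with $\dist_k := r_k^{-1}\dist$ and $\meas_k := \meas(B_{r_k}(x))^{-1}\meas$. By the pmGH compactness of $\RCD(0,N)$ spaces (Theorem~\ref{thm:moduli}), along a subsequence these converge to some pointed $\RCD(0,N)$ space $(\YY, \dist_{\YY}, \meas_{\YY}, y)$.

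The key step is to show $\meas_{\YY}$ is Ahlfors $N$-regular, i.e.\ $\meas_{\YY}(B_R(z)) \asymp R^N$ uniformly for $z \in \YY$ and $R > 0$. I combine the Bishop-Gromov monotonicity of $r \mapsto \meas(B_r(z))/r^N$ (valid in $\RCD(0,N)$) with both hypotheses. Monotonicity plus~\eqref{eq:fromfareq0} gives $\meas(B_r(z))/r^N \le \meas(B_1(z)) \le C$ for every $z \in \XX$ and every $r \ge 1$, where $C := \sup_{z\in\XX}\meas(B_1(z))$; monotonicity plus the basepoint-independent limit $\Theta$ gives $\meas(B_r(z))/r^N \ge \Theta$ for every $z \in \XX$ and every $r > 0$. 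In particular $\meas(B_{r_k}(x)) \in [\Theta r_k^N, Cr_k^N]$, so the normalizing constants stay non-degenerate, and for any $z_k \in \XX$ and $R > 0$ with $r_k R \ge 1$,
\begin{equation*}
\frac{\Theta}{C}\,R^N \;\le\; \meas_k\bigl(B_R^{\dist_k}(z_k)\bigr) \;\le\; \frac{C}{\Theta}\,R^N.
\end{equation*}
Passing to the pmGH limit along $\meas_{\YY}$-continuity radii transfers the same bound to $\YY$.

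With Ahlfors $N$-regularity in hand, $\theta_N[\YY,\dist_{\YY},\meas_{\YY}](z) \in (0,\infty)$ for every $z \in \YY$, and in particular $\meas_{\YY}(\{\theta_N<\infty\}) > 0$, so Theorem~\ref{thm:mainres} applied to $(\YY,\dist_{\YY},\meas_{\YY})$ yields $\meas_{\YY} = c'\haus^N$ for some $c' \in (0,\infty)$; hence the essential dimension of $\YY$ equals $N$. Since essential dimension is invariant under scaling of metric and measure, each $(\XX,\dist_k,\meas_k)$ has the same essential dimension $n$ as $\XX$; the lower semicontinuity of the essential dimension under pmGH convergence (Theorem~\ref{thm:loweress}) therefore forces $N \le n$, and combined with the a priori $n \le N$ we conclude $n = N$. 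Thus $\XX$ is weakly non-collapsed, and a second application of Theorem~\ref{thm:mainres}, this time to $(\XX,\dist,\meas)$, produces the desired $c \in (0,\infty)$ with $\meas = c\haus^N$.

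The main obstacle is really only in the second paragraph, where the role of each hypothesis is essential: \eqref{eq:fromfareq} delivers both the Euclidean-type lower bound on ball measures at \emph{all} scales and the non-degeneracy of the normalization $\meas(B_{r_k}(x))$, while \eqref{eq:fromfareq0} furnishes the matching upper bound that prevents $\meas_{\YY}$ from blowing up in the limit. Passing ball measures through pmGH convergence is routine modulo the standard caveat of restricting to continuity radii, and everything else reduces to a direct invocation of the tools established earlier in the paper.
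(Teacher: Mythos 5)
Your proof is correct and follows the same strategy as the paper's: pass to an asymptotic cone via Bishop--Gromov monotonicity and the two hypotheses, conclude the cone is weakly non-collapsed, transfer the essential dimension back to $\XX$ by lower semicontinuity under pmGH convergence (Theorem~\ref{thm:loweress}) and scaling invariance, and finish with Theorem~\ref{thm:mainres}. The only cosmetic differences are that you establish a superfluous lower bound on ball measures in $\YY$ (full Ahlfors regularity is not needed, only that $\theta_N[\YY]<\infty$ on a positive-measure set) and that you apply Theorem~\ref{thm:mainres} to $\YY$ before concluding, where the paper just reads off $\mathrm{essdim}(\YY)=N$ directly from Theorem~\ref{thm:equivnon}.
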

Notice that the assumption \eqref{eq:fromfareq0} is essential, as this simple example shows: just consider the $\RCD(0,N)$ space $([0,\infty),\dist_\mathbb{R}, x^{N-1}\haus^1)$, which satisfies \eqref{eq:fromfareq} but is clearly not non-collapsed. Conversely, any non-collapsed $\RCD(K,N)$ space $(\XX,\dist,\haus^N)$ satisfies \eqref{eq:fromfareq0}, as a consequence of the Bishop-Gromov inequality and \eqref{eq:densityest1}.

\smallskip\noindent
\textbf{Acknowledgement.}
The authors thank the anonymous referee for the very careful review and several helpful suggestions that improve the presentation. The third named author acknowledges supports of the Grant-in-Aid for Scientific Research (B) of 20H01799 and 
the Grant-in-Aid for Scientific Research (B) of 21H00977.

\section{Preliminaries}\label{sec:2}
Throughout the paper:
\begin{itemize}
\item by \emph{metric measure space} $(\X,\sfd,\mm)$ we always intend a complete and separable metric space equipped with a non-negative Borel measure finite on bounded sets such that $\supp(\mm)=\X$;
\item $C$ denotes a positive constant, that may vary from step to step. Occasionally we may emphasize the parameters on which the constant depends, so that, say, $C(K, N)$ denotes a positive constant depending only on $K$ and $N$;
\item  $\mathrm{Lip}(\XX, \dist)$ (resp.\ $\mathrm{Lip}_b(\XX, \dist)$, resp.\  $\mathrm{Lip}_{bs}(\XX, \dist)$) denotes the set of all Lipschitz (resp.\ bounded Lipschitz, resp.\ Lipschitz with bounded support) functions on a metric space $(\XX, \dist)$;

\item We denote by $\lip f:\X\to[0,\infty]$ the \emph{local Lipschitz constant} of the function $f:\X\to\R$ defined by
\[
\lip f(x):=\limsup_{y\to x}\frac{|f(y)-f(x)|}{\sfd(y,x)}
\]
if $x$ is not isolated and has to be understood as $0$ if $x$ is isolated;
\item $L^p_{{loc}}$  means that the restriction (for functions, tensors and so on) to any compact subset of the domain is $L^p$.
\end{itemize}
\subsection{Definition and heat flow}\label{2dot1sect}
Fix a metric measure space $(\XX, \dist, \meas)$. The Cheeger energy $\Ch:L^2(\XX,\meas)\to [0,\infty]$ is defined by
\begin{equation}\label{eq:defchee}
  \Ch(f):=\inf_{\|f_i-f\|_{L^2}\to 0}
      \left\{ \liminf_{i\to\infty}\int_\XX(\lip f_i)^2\di\meas:\ f_i\in\Lip_b(\XX,\dist)\cap L^2(\XX,\meas)
     \right\}.
\end{equation}
Then, the Sobolev space $H^{1,2}(\XX,\dist,\meas)$ is defined as the finiteness domain of $\Ch$.
By looking at the optimal sequence in \eqref{eq:defchee} one can identify a canonical object $|\D f|$, called the minimal relaxed slope,
which is local on Borel sets (i.e.\ $|\D f_1|=|\D f_2|$ $\meas$-a.e.\ on $\{f_1=f_2\}$) and provides integral representation to $\Ch$, namely
\[
\Ch (f)=\int_\XX|\D f|^2\di\meas\qquad\forall f\in H^{1,2}(\XX,\dist,\meas).
\]
We are now in a position to introduce the definition of $\RCD(K, N)$ spaces (the equivalence of the following definition with the  one proposed in  \cite{Gigli12} is in \cite{AmbrosioGigliSavare12}, \cite{Erbar-Kuwada-Sturm13}, see also \cite{AmbrosioMondinoSavare13}):

\begin{definition}[$\RCD(K,N)$ space]\label{def:rcdkn}
For any $K \in \mathbb{R}$ and any $N \in [1, \infty]$, a metric measure space $(\XX, \dist, \meas)$ is said to be an $\RCD(K, N)$ space if the following four conditions are satisfied.
\begin{enumerate}
  \item There exist $x \in X$ and $C>1$ such that $\meas (B_r(x))\le Ce^{Cr^2}$ holds for any $r>0$.
  \item $\Ch$ is a quadratic form. In this case for  $f_i \in H^{1, 2}(\XX, \dist, \meas)  (i=1, 2)$ we put
    \begin{equation}\notag
    \langle\nabla f_1,\nabla f_2\rangle:=\lim_{\epsilon\to 0}\frac{|\D (f_1+\epsilon f_2)|^2-|\D f_1|^2}{2\epsilon} \in L^1(\XX, \meas).
    \end{equation}
  \item Any $f \in H^{1, 2}(\XX, \dist, \meas)$ with $|\D f| \le 1$ for $\meas$-a.e.\ has a $1$-Lipschitz representative.
  \item For any $f \in D(\Delta)$ with $\Delta f \in H^{1, 2}(\XX, \dist, \meas)$ we have
    \begin{equation}\label{eq:bochner}
      \frac{1}{2}\int_\XX|\D f|^2\Delta \phi \di \meas \ge \int_\XX\phi \left( \frac{(\Delta f)^2}{N}+\langle \nabla \Delta f, \nabla f\rangle +K|\D f|^2\right)\di \meas
    \end{equation}
    for any $\phi \in D(\Delta) \cap L^{\infty}(\XX, \meas)$ with $\phi \ge 0$, $\Delta \phi \in L^{\infty}(\XX, \meas)$, where 
     \begin{align*}
       D(\Delta)
\defeq \bigg\{ &f \in H^{1,2}(\XX,\dist,\meas) : \exists h \in L^2(\XX,\meas) \, \, \text{s.t.}\\ &\qquad\int_{\XX}\langle \nabla f, \nabla \phi \rangle \di \meas = - \int_\XX h \phi \di \meas, \, \, \forall \phi \in H^{1,2}(\XX,\dist,\meas) \bigg\}
     \end{align*}
 and $\Delta f := h$ for any $f \in D(\Delta)$.
\end{enumerate}
\end{definition}
 
We point out that, unless otherwise specified, when we write $\RCD(K,N)$ we implicitly assume $N<\infty$.
Notice that, by the very definition, if $(\XX,\dist,\mass)$ is an $\RCD(K,N)$ space, then $(\XX,a \dist,b \mass)$ is an $\RCD(a^{-2}K, N)$ space for any $a,b\in(0,\infty)$, for $N\in [1,\infty]$.

In the rest of this subsection, let us fix an $\RCD(K, \infty)$ space $(\XX, \dist, \meas)$ and let us introduce the fundamental properties, except for the \textit{second order differential calculus} developed in \cite{Gigli14} which will be treated in Subsection \ref{subsec:diff}. 

First let us recall the \textit{heat flow} associated with $\Ch$
\[
\heat_t:L^2(\XX, \meas) \to L^2(\XX, \meas).
\]
This family of  maps is characterized by the properties: $\heat_tf \to f$ in $L^2(\XX, \meas)$ as $t\to 0^+$, $\heat_t f\in  D(\Delta)$ for any $f\in L^2$, $t>0$ and for any $t>0 $ it holds
\begin{equation}\label{eq:heatfl}
\frac{\di}{\di t}\heat_tf=\Delta \heat_tf\qquad \text{in $L^2(\XX, \meas)$.}
\end{equation}
It will be useful to keep in mind the following a-priori estimates (\cite[Remark 5.2.11]{GP19}):
\begin{equation}\label{eq:classheat1}
    \norm{|\D \heat_t f|}_{L^2}\le \frac{\norm{f}_{L^2}}{\sqrt t}\qquad\qquad    \norm{\Delta\heat_t f}_{L^2}\le \frac{\norm{f}_{L^2}}{t}\qquad\forall f\in L^2(\X,\mm),\forall t>0
\end{equation}
as well as the fact that
\begin{equation}
\label{eq:l2decr}
t\mapsto \|\heat_tf\|_{L^2}\quad\text{ is non-increasing for every }f\in L^2(\X,\mm).
\end{equation}

Then the $1$-\textit{Bakry-\'Emery} estimate proved in \cite[Corollary 4.3]{Savare13} is stated as for any $f \in H^{1, 2}(\XX, \dist, \meas)$,
\begin{equation}\label{eq:1be}
|\D \heat_tf|(x)\le e^{-Kt}|\D f|(x)\qquad \text{for $\meas$-a.e.\ $x \in \XX$,}
\end{equation}
which in particular implies
\begin{equation}\notag
\heat_tf \to f\qquad \text{in $H^{1, 2}(\XX, \dist, \meas)$.}
\end{equation}
It is also worth pointing out that the heat flow $\heat_t$ also acts on $L^p(\XX, \meas)$ for any $p \in [1, \infty]$ with
\begin{equation}\notag
\|\heat_tf\|_{L^p}\le \|f\|_{L^p}\qquad \forall f \in L^p(\XX, \meas).
\end{equation}

%
%
%
%
Finally let us recall that the following $(1, 1)$-Poincaré inequality is satisfied:
\begin{equation}\label{eq:poincare}
\begin{split}
	&\int_{B_r(x)}\Big| f-\frac{1}{\meas (B_r(x))}\int_{B_r(x)}f\di \meas \Big| \di \meas \le 4e^{|K|r^2}r\int_{B_{3r}(x)}|\D f|\di \meas \qquad \forall f \in H^{1, 2}(\XX, \dist, \meas),\forall r>0
\end{split}
\end{equation}
which is also valid for larger class, $\CD(K, \infty)$ spaces.
See \cite{Rajala12} for the detail.

\subsection{Calculus on $\RCD(K, \infty)$ spaces}\label{subsec:diff}

Let $(\XX, \dist, \meas)$ be a metric measure space. We assume that the readers are familiar with the notion of normed module, introduced in \cite{Gigli14}, inspired by the theory developed in \cite{Weaver01}. Here we just recall few basic definitions, mostly to fix the notation. Unless otherwise stated, the material comes form \cite{Gigli14}, \cite{Gigli17}.

A $L^0$-normed module is a topological vector space $\mathscr M$ that is also a module over the commutative ring with unity $L^0(\X,\mm)$, possessing a \emph{pointwise norm}, i.e.\ a map $|\cdot|:\mathscr M\to L^0(\X,\mm)$ such that
\[
|fv+gw|\leq |f||v|+|g||w|\qquad\mm\text{-a.e.,\ }\forall v,w\in\mathscr M,\forall f,g\in L^0(\X,\mm),
\]
and such that the distance
\begin{equation}
\label{eq:distmod}
\sfd_{\mathscr M}(v,w):=\int_\XX 1\wedge|v-w|\,\d\mm'
\end{equation}
is complete and induces the topology of $\mathscr M$, where here $\mm'$ is a Borel probability  measure such that $\mm\ll\mm'\ll\mm$ (the actual choice of $\mm'$ affects the distance but not the topology nor  completeness).

$\mathscr M$ is said to be a Hilbert module provided
\[
|v+w|^2+|v-w|^2=2(|v|^2+|w|^2)\qquad\mm\text{-a.e.,\ }\forall v,w\in\mathscr M
\]
and in this case by polarization we can define a pointwise scalar product as
\[
\langle v,w\rangle:=\tfrac12(|v+w|^2-|v|^2-|w|^2)\qquad\mm\text{-a.e.,\ }\forall v,w\in\mathscr M
\]
that turns out to be $L^0$-bilinear and continuous. The tensor product of two Hilbert modules $\mathscr M_1,\mathscr M_2$ is defined as the completion of the algebraic tensor product as $L^0$-modules w.r.t.\ the distance induced by the pointwise norm that in turn is induced by the pointwise scalar product characterized by
\[
\langle v_1\otimes w_1,v_2\otimes w_2\rangle_{\sf HS}:=\langle v_1,v_2\rangle_1\, \langle   w_1,  w_2\rangle_2.
\]
The pointwise norm and scalar product on a tensor product will often be denoted with the subscript ${\sf HS}$, standing for \emph{Hilbert-Schmidt}. The dual  $\mathscr M^*$ of $\mathscr M$ is defined as the collection of $L^0$-linear and continuous maps $L:\mathscr M\to L^0(\X,\mm)$, is equipped with the natural multiplication by $L^0$ functions $(f\cdot L(v):=L(fv))$ and the pointwise norm
\[
|L|_*:=\esssup\limits_{v:|v|\leq1 \ \mm\text{-a.e.}}L(v).
\]
It is then easy to check that $\mathscr M^*$ equipped with the topology induced by the distance defined as in \eqref{eq:distmod} is a $L^0$-normed module. If $\mathscr M$ is Hilbert, then so is $\mathscr M^*$  and the map sending $v\in \mathscr M$ to $(w\mapsto \langle v,w\rangle)\in\mathscr M^*$ is an isomorphism of $L^0$-modules, called Riesz isomorphism. 

The kind of differential calculus on metric measure spaces we are going to use in this manuscript is based around the following result, that defines both the \emph{cotangent module} and the \emph{differential} of Sobolev functions:
\begin{theorem}
Let $(\X,\sfd,\mm)$ be a metric measure space. Then there is a unique, up to unique isomorphism, couple $(L^0(T^*(\X,\sfd,\mm)),\d)$ such that $L^0(T^*(\X,\sfd,\mm))$ is a $L^0$-normed module, $\d: H^{1,2}(\X,\sfd,\mm)\to L^0(T^*(\X,\sfd,\mm))$ is linear and such that:
\begin{itemize}
\item[1)] $|\d f|=|D f|$ $\mm$-a.e.\ for every $f\in H^{1,2}(\X,\sfd,\mm)$,
\item[2)] $L^0$-linear combinations of elements of the form $\d f$ for $f\in H^{1,2}(\X,\sfd,\mm)$ are dense in $L^0(T^*(\X,\sfd,\mm))$.
\end{itemize}
\end{theorem}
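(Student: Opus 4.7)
The goal is to construct a canonical $L^0$-normed module together with a linear map $\d$ from $H^{1,2}$ satisfying the two listed properties, and to show that any two such objects are canonically isomorphic. I would follow the standard ``pre-cotangent module'' construction introduced by Weaver and adapted by Gigli, which uses the minimal relaxed slope $|Df|$ as the scaffold for a pointwise norm and then completes in the $L^0$-metric.

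\textbf{Existence.} First I would form the \emph{pre-cotangent module} $\mathrm{Pcm}$ consisting of formal objects $\{(A_i, f_i)\}_{i\in\mathbb N}$, where $\{A_i\}$ is a Borel partition of $\X$ and $f_i\in H^{1,2}(\X,\sfd,\mm)$, subject to the natural integrability $\sum_i \int_{A_i} |Df_i|^2\wedge 1 \,\d\mm' < \infty$ for a reference probability $\mm'$ with $\mm\ll\mm'\ll\mm$. Two such objects are identified whenever they give the same element under the formal rule $\sum_i \chi_{A_i}\d f_i$, which one defines rigorously by declaring pointwise a.e.\ agreement on a common refinement of the partitions. On $\mathrm{Pcm}$ I would introduce the pointwise (semi)norm
\[
\Bigl|\sum_i \chi_{A_i}\d f_i\Bigr| := \sum_i \chi_{A_i}|Df_i| \qquad \mm\text{-a.e.},
\]
and crucially verify that this is well-defined: this uses the \emph{locality} of the minimal relaxed slope, i.e.\ $|Df_1|=|Df_2|$ $\mm$-a.e.\ on $\{f_1=f_2\}$, together with the identity $|D(\lambda f+\mu g)|\leq |\lambda||Df|+|\mu||Dg|$ to handle linear combinations on common refinements. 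One then quotients by the subspace of elements with zero pointwise norm, equips the quotient with scalar multiplication by simple functions, and completes in the distance $\sfd_{\mathscr M}$ from \eqref{eq:distmod}. The $L^0$-action extends from simple to general $L^0$-functions by continuity, producing an $L^0$-normed module. The map $\d f := [(\X,f)]$ is linear, satisfies $|\d f|=|Df|$ by construction, and $L^0$-linear combinations of its image are dense by the very definition of $\mathrm{Pcm}$, so density (property 2) comes essentially for free.

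\textbf{Uniqueness.} Suppose $(\mathscr M_1,\d_1)$ and $(\mathscr M_2,\d_2)$ both satisfy the conclusion. I would define an isomorphism $\Phi$ on the dense subspace of $L^0$-linear combinations $\sum_i \chi_{A_i}\d_1 f_i$ by sending it to $\sum_i \chi_{A_i}\d_2 f_i$. Well-definedness and norm preservation on this dense subspace reduce to showing that whenever $\sum_i \chi_{A_i}\d_1 f_i = 0$ in $\mathscr M_1$, the same combination vanishes in $\mathscr M_2$; but this is equivalent to $\sum_i \chi_{A_i}|Df_i|=0$ $\mm$-a.e., which is a condition intrinsic to $H^{1,2}$ and hence independent of which module we are in. The pointwise-norm identity on $\d_i f$ together with the compatibility of pointwise norms with the distance $\sfd_{\mathscr M_i}$ then shows $\Phi$ is an $L^0$-linear isometry on a dense set, so it extends uniquely to the whole space and clearly intertwines $\d_1$ with $\d_2$. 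Uniqueness of the extension is automatic from density.

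\textbf{Main obstacle.} The delicate point is the well-definedness of the pointwise norm on the pre-cotangent module: one must carefully leverage the locality of $|D\cdot|$ and closure/lower semicontinuity of the Cheeger energy under the relevant limits, and verify that the equivalence classes respect refinements of Borel partitions and linear operations. Everything else (completion, extension of $L^0$-action, construction of the canonical isomorphism) is then essentially a formal consequence, but all of it hinges on the pointwise norm being genuinely intrinsic to the Sobolev structure.
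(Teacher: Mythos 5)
Your proof sketch is correct and follows exactly the pre-cotangent module construction of Gigli (\cite{Gigli14}, \cite{Gigli17}), which are precisely the references the paper defers to for this statement without reproducing the argument. The one detail worth flagging is that your integrability condition $\sum_i\int_{A_i}|Df_i|^2\wedge1\,\d\mm'<\infty$ is vacuous for the $L^0$-module (it is automatic since $\mm'$ is a probability and the summands are bounded by $\mm'(A_i)$), and the equivalence relation on formal objects should be stated explicitly as $|D(f_i-g_j)|=0$ $\mm$-a.e.\ on $A_i\cap B_j$, from which well-definedness of the pointwise norm follows via subadditivity of the minimal relaxed slope — but these are precisely the points you identify as needing care, and the strategy is sound.
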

The dual of $L^0(T^*(\X,\sfd,\mm))$ is denoted $L^0(T(\X,\sfd,\mm))$ and called \emph{tangent module}. Elements of $L^0(T^*(\X,\sfd,\mm))$  are called 1-forms and elements of $L^0(T(\X,\sfd,\mm))$  are called vector fields on $\X$.

In this case we shall denote by $\nabla f\in L^0(T(\X,\sfd,\mm))$ the image of $\d f$ under the Riesz isomorphism.

The tensor product of $L^0(T(\X,\sfd,\mm))$ with itself will be denoted $L^0(T^{\otimes 2}(\X,\sfd,\mm))$, similarly for $L^0(T^*(\X,\sfd,\mm))$.  Notice that, rather trivially, $L^0(T^{\otimes 2}(\X,\sfd,\mm))$ and $L^0((T^*)^{\otimes 2}(\X,\sfd,\mm))$ are one the dual of each other, in a natural way. 

For $p\in[1,\infty]$, the collection of 1-forms $\omega$ with $|\omega|\in L^p(\X,\mm)$ (resp.\ $L^p_{loc}(\X,\mm)$) will be denoted $L^p(T^*(\X,\sfd,\mm))$ (resp.\ $L^p_{loc}(T^*(\X,\sfd,\mm))$). Similarly for vector fields and other tensors. Convergence in the spaces $L^p(T^*(\X,\sfd,\mm))$ (resp.\ $L^p_{loc}(T^*(\X,\sfd,\mm))$) is defined in the obvious way.

\bigskip

All this for general metric measure spaces. In the $\RCD(K,\infty)$ case we now recall the definition of the set of \textit{test} functions (introduced in \cite{Savare13}):
\[
\mathrm{Test}F(\XX, \dist, \meas):=\left\{ f \in \mathrm{Lip}(\XX, \dist) \cap D(\Delta) \cap L^{\infty}(\XX, \meas): \Delta f \in H^{1, 2}(\XX, \dist, \meas)\right\}
\]
which is an algebra. It is known (\cite{Savare13}) that $|\nabla f|^2 \in H^{1, 2}(\XX, \dist, \meas)$ for any $f \in \mathrm{Test}F(\XX, \dist, \meas)$, that $\mathrm{Test}F(\XX, \dist, \meas)$ is dense in $(D(\Delta), \| \cdot\|_D)$, where $\|f\|_D^2:=\|f\|_{H^{1, 2}}^2+\|\Delta f\|_{L^2}^2$,  and that if $f \in L^2 \cap L^{\infty}(\XX, \meas)$, then $\heat_tf \in \mathrm{Test}F(\XX, \dist, \meas)$ for any $t>0$.
The following result is proved in \cite{Gigli14}.
\begin{theorem}\label{thm:bochnerhessian} Let $(\XX, \dist, \meas)$ be an $\RCD(K, \infty)$ space.
For any $f \in \mathrm{Test}F(\XX, \dist, \meas)$ there exists a unique $T \in L^2((T^*)^{\otimes 2}(\XX, \dist, \meas))$ such that for all $f_i \in \mathrm{Test}F(\XX, \dist, \meas)$,
\begin{equation}\label{eq:hess}
T(\nabla f_1, \nabla f_2)=\frac{1}{2}\left(\langle \nabla f_1, \nabla \langle \nabla f_2, \nabla f\rangle \rangle + \langle \nabla f_2, \nabla \langle \nabla f_1, \nabla f \rangle \rangle -\langle f, \nabla \langle \nabla f_1, \nabla f_2\rangle \rangle \right)
\end{equation}
holds for $\meas$-a.e.\ $x \in \XX$. Since $T$ is unique, we denote it by $\mathrm{Hess} f$ and call it the \textit{Hessian} of $f$. Moreover 
for any $f \in \mathrm{Test}F(\XX, \dist, \meas)$ and any $\phi \in D(\Delta) \cap L^{\infty}(\XX, \meas)$ with $\Delta \phi \in L^{\infty}(\XX, \meas)$ and $\phi \ge 0$, we have
\begin{equation}\label{eq:bochnerhessi}
\int_{\XX}\phi |\mathrm{Hess} f|_{\sf HS}^2\di \meas \le \int_{\XX} \frac{1}{2}\Delta \phi \cdot |\nabla f|^2- \phi\langle \nabla \Delta f, \nabla f\rangle -K\phi |\nabla f|^2 \di \meas
\end{equation}
and
\begin{equation}\label{eq:bochner2}
\int_{\XX}|\mathrm{Hess}f|_{\sf HS}^2\dd \meas \le \int_{\XX} (\Delta f)^2-K|\nabla f|^2\dd \meas.
\end{equation}
\end{theorem}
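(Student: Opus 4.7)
The plan is to follow the standard construction of a Hessian in the $\RCD$ setting: first define a candidate $T$ via the Koszul-type formula \eqref{eq:hess} on pairs of gradients of test functions (which constitute a dense subset of the tangent module), then verify tensoriality of this candidate, and finally use the Bochner inequality \eqref{eq:bochner} from Definition \ref{def:rcdkn} to produce the $L^2$-control that promotes $T$ to a well-defined element of $L^2((T^*)^{\otimes 2}(\XX,\dist,\meas))$.

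In more detail: fix $f\in\mathrm{Test}F(\XX,\dist,\meas)$ and, for $f_1,f_2\in\mathrm{Test}F(\XX,\dist,\meas)$, denote by $H[f](f_1,f_2)\in L^1_{loc}(\XX,\meas)$ the function defined by the right-hand side of \eqref{eq:hess} (with the last term understood as $\langle\nabla f,\nabla\langle\nabla f_1,\nabla f_2\rangle\rangle$). All three inner products make sense because $|\nabla g|^2\in H^{1,2}(\XX,\dist,\meas)$ for any test $g$, so $\langle\nabla g,\nabla h\rangle$ lies in $H^{1,2}$ for test $g,h$ and its gradient can be paired with another gradient. One immediately checks that $H[f]$ is $\mathbb R$-bilinear and, after an elementary rearrangement, symmetric in $(f_1,f_2)$. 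The crucial step is then to show the Leibniz rule
\[
H[f](g f_1,f_2)=g\,H[f](f_1,f_2)+f_1\,H[f](g,f_2)\qquad\mm\text{-a.e.}
\]
for $g,f_1,f_2$ test; this follows by expanding each inner product via the Leibniz rule $\nabla(g f_1)=g\nabla f_1+f_1\nabla g$ and the identity $\langle\nabla(g h),\nabla k\rangle=g\langle\nabla h,\nabla k\rangle+h\langle\nabla g,\nabla k\rangle$, then collecting terms. Once this tensoriality holds in each slot, $H[f]$ descends to a well-defined symmetric $L^0$-bilinear form on finite sums $\sum_i g_i\nabla f_i$ of simple vector fields.

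To obtain the $L^2$ bound, and with it existence, I would apply \eqref{eq:bochner} with $f$ replaced by $f_1$ and $f_1+\varepsilon f_2$, differentiate at $\varepsilon=0$, and combine polarizations over finite orthonormal-type families of gradients of test functions. This yields the pointwise Hilbert--Schmidt estimate \eqref{eq:bochnerhessi} directly from \eqref{eq:bochner}, and \eqref{eq:bochner2} follows by choosing $\phi$ that approximates the constant $1$ from within the class of admissible test multipliers (using density of $\mathrm{Test}F$ in $D(\Delta)$ in the graph norm, together with \eqref{eq:l2decr}). The estimate \eqref{eq:bochnerhessi} provides the continuity needed to extend $H[f]$ uniquely to an element $T=\mathrm{Hess}\,f\in L^2((T^*)^{\otimes 2}(\XX,\dist,\meas))$, and uniqueness of $T$ (given the defining formula on gradient pairs) follows from the density of simple vector fields. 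The main obstacle I foresee is verifying the Leibniz/tensoriality step cleanly: in the absence of an a priori pointwise second-order calculus, every algebraic manipulation must be reduced to first-order Leibniz and chain rules for $|\nabla g|^2$ and $\langle\nabla g,\nabla h\rangle$, and one must carefully track the $L^1$/$L^2$ integrability needed for each term so that the polarization argument from Bochner can be applied.
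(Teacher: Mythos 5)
The paper does not prove this statement; it is imported verbatim from \cite{Gigli14} (``The following result is proved in \cite{Gigli14}''), so there is no in-paper proof to compare against. Your overall architecture---define $H[f]$ via the Koszul-type formula \eqref{eq:hess}, verify tensoriality via the Leibniz rule, obtain an $L^2$ bound from the Bochner inequality \eqref{eq:bochner}, and extend by density of $\{\sum_i g_i \nabla f_i\}$---is indeed the skeleton of Gigli's argument, and your Leibniz computation is correct (the extra terms cancel pairwise).

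The genuine gap is in the step where you pass from \eqref{eq:bochner} to the Hessian estimate \eqref{eq:bochnerhessi}. You propose to ``apply \eqref{eq:bochner} with $f$ replaced by $f_1$ and $f_1+\varepsilon f_2$, differentiate at $\varepsilon=0$, and combine polarizations.'' Polarizing the Bochner inequality in the function argument only yields control on the bilinear form $\Gamma_2(f_1,f_2)=\tfrac12\Delta\langle\nabla f_1,\nabla f_2\rangle-\tfrac12\langle\nabla\Delta f_1,\nabla f_2\rangle-\tfrac12\langle\nabla f_1,\nabla\Delta f_2\rangle$, which in the smooth model equals $\langle\hess f_1,\hess f_2\rangle_{\sf HS}+\mathrm{Ric}(\nabla f_1,\nabla f_2)$. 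The Ricci term has no sign, so $\Gamma_2\geq K\,\Gamma$ does \emph{not} by itself isolate $|\hess f|_{\sf HS}^2$: one cannot read off $\int\phi|\hess f|_{\sf HS}^2\leq\int\big(\tfrac12\Delta\phi|\nabla f|^2-\phi\langle\nabla\Delta f,\nabla f\rangle-K\phi|\nabla f|^2\big)$ from linear polarization alone. What is actually required is Savar\'e's \emph{self-improvement} of the Bochner inequality (see \cite{Savare13} and the relevant sections of \cite{Gigli14}): one applies \eqref{eq:bochner} to nonlinear polynomial combinations $\Phi(f_1,\dots,f_m)$ of test functions, expands $\Gamma_2(\Phi(\vec f))$ by the chain rule---which is precisely how second-order quantities of the form on the right of \eqref{eq:hess} enter---and exploits the resulting non-negativity of a quadratic form in the free coefficients $\partial_i\Phi$, $\partial_{ij}\Phi$ to extract the Hilbert--Schmidt bound. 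This nonlinear input is what distinguishes the Hessian estimate from a mere $\Gamma_2$ bound, and it is missing from your sketch. There is also a circularity to watch: \eqref{eq:bochnerhessi} as written has $\hess f$ on its left side, so the a priori estimate must first be formulated purely in terms of first-order quantities (finite sums of Koszul expressions) before $\hess f\in L^2((T^*)^{\otimes 2})$ can be declared to exist; your order of presentation blurs this. By contrast, the passage from \eqref{eq:bochnerhessi} to \eqref{eq:bochner2} via a good cut-off sequence $\phi_n\uparrow 1$ with $\Delta\phi_n\to 0$ suitably, together with the integration by parts $\int\langle\nabla\Delta f,\nabla f\rangle\,\d\meas=-\int(\Delta f)^2\,\d\meas$, is routine as you suggest.
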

Thanks to (\ref{eq:bochnerhessi}) with the density of  $\mathrm{Test}F(\XX, \dist, \meas)$ in $D(\Delta)$, for any $f \in D(\Delta)$ we can also define $\mathrm{Hess}f \in L^2((T^*)^{\otimes 2}(\XX, \dist, \meas))$ with the equality (\ref{eq:hess}), where $\langle \nabla f, \nabla f_i\rangle \in H^{1, 1}(\XX, \dist, \meas)$.

\begin{definition}[Divergence $\mathrm{div}$]\label{def:adj}
Let $(\XX, \dist, \meas)$ be an $\RCD(K, \infty)$ space. Denote by $D(\mathrm{div})$ (resp.\ $D_{{loc}}(\mathrm{div})$) the set of all $V \in L^2(T(\XX, \dist, \meas))$ (resp.\ $V \in L^2_{{loc}}(T(\XX, \dist, \meas))$) for which there exists $f \in L^2(\XX, \meas)$ (resp.\ $f \in L^2_{{loc}}(\XX, \meas)$) such that 
\[
\int_\XX\langle V, \nabla h \rangle \dd \meas= -\int_\XX f h\, \dd \meas \qquad \forall h \in \mathrm{Lip}_{{bs}}(\XX, \dist).
\]
Since $f$ is unique (because $\mathrm{Lip}_{{bs}}(\XX, \dist)$ is dense in $L^2(\XX,\mass)$), we define $\mathrm{div} V\defeq f$.
\end{definition}
Note that for any $f \in H^{1, 2}(\XX, \dist, \meas)$, $f \in D(\Delta)$ if and only if $\nabla f \in D(\mathrm{div})$. Moreover if $f \in D(\Delta)$, then for any $\phi \in \mathrm{Lip}_b(\XX, \dist)$ we have $\phi \nabla f \in D(\mathrm{div})$ with 
\[
\mathrm{div}(\phi \nabla f)=\langle \nabla \phi, \nabla f\rangle +\phi \Delta f.
\]
Recalling that the covariant derivative of $f\dd h$ is given by $\dd f \otimes \dd h +f \mathrm{Hess} h$, the following definition is justified:
\begin{definition}[Adjoint operator $\nabla^*$] Let $(\XX, \dist, \meas)$ be an $\RCD(K, \infty)$ space.
Denote by $D(\nabla^*)$ (resp.\ $D_{{loc}}(\nabla^*)$) the set of all $T \in L^2((T^*)^{\otimes 2}(\XX, \dist, \meas))$ (resp.\ $T \in L^2_{{loc}}((T^*)^{\otimes 2}(\XX, \dist, \meas))$) for which there exists $\eta \in L^2(T^*(\XX, \dist, \meas))$ (resp.\ $\eta \in L^2_{{loc}}(T^*(\XX, \dist, \meas))$) such that 
\begin{equation}\notag
\int_\XX\langle T, \dd f \otimes \dd h+f\, \mathrm{Hess} h \rangle_{\sf HS}\,\dd \meas =\int_\XX\langle \eta, f \di h \rangle \dd \meas \qquad \forall f \in \mathrm{Lip}_{{bs}}(\XX, \dist),\forall h \in D(\Delta).
\end{equation}
 Since $\eta$ is unique (because objects of the form $f\d h$ generate $L^2(T^*(\X,\sfd,\mm))$), we denote it by $\nabla^*T$.
\end{definition}
It follows from a direct calculation that the following holds. See \cite[Proposition 2.18]{honda2021isometric} for the proof.
\begin{proposition}\label{prop:adjoing formula}
Let $(\XX, \dist, \meas)$ be an $\RCD(K, \infty)$ space and let $f \in \mathrm{Test}F(\XX, \dist, \meas)$. Then we have $\dd f \otimes \dd f \in D(\nabla^*)$ with
\[
\nabla^*(\dd f \otimes \dd f)=-\Delta f \dd f -\frac{1}{2}\dd |\dd f|^2.
\]\end{proposition}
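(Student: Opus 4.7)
The plan is to verify the defining identity of $\nabla^*$ by direct computation, unfolding the HS scalar product, rewriting the Hessian term via \eqref{eq:hess} and finally integrating by parts. Fix arbitrary $g \in \Lipbs(\XX,\dist)$ and $h \in D(\Delta)$: since $\langle \dd f \otimes \dd f, \dd g \otimes \dd h\rangle_{\sf HS} = \langle \nabla f,\nabla g\rangle\langle \nabla f,\nabla h\rangle$ and $\langle \dd f \otimes \dd f, \mathrm{Hess}\,h\rangle_{\sf HS} = \mathrm{Hess}\,h(\nabla f,\nabla f)$, the desired identity reduces to
\begin{equation*}
\int_\XX \langle \nabla f,\nabla g\rangle\langle \nabla f,\nabla h\rangle + g\, \mathrm{Hess}\,h(\nabla f,\nabla f)\, \di\meas = -\int_\XX g\Delta f \langle \nabla f,\nabla h\rangle + \tfrac{g}{2}\langle \nabla h, \nabla|\nabla f|^2\rangle\, \di\meas.
\end{equation*}

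First I would reduce to the case $h \in \mathrm{Test}F(\XX,\dist,\meas)$: since $\mathrm{Test}F$ is dense in $(D(\Delta),\|\cdot\|_D)$, and both sides of the above depend continuously on $h$ in this graph norm (the Hessian term via the bound \eqref{eq:bochner2}, the other terms via Cauchy--Schwarz together with $|\nabla f|,\Delta f \in L^\infty \cup L^2$), the general case follows once the identity is established for test $h$. For $h \in \mathrm{Test}F$, specializing the Hessian formula \eqref{eq:hess} to $f_1 = f_2 = f$ yields
\begin{equation*}
\mathrm{Hess}\,h(\nabla f,\nabla f) = \langle \nabla f,\nabla\langle \nabla f,\nabla h\rangle\rangle - \tfrac{1}{2}\langle \nabla h,\nabla|\nabla f|^2\rangle.
\end{equation*}
Substituting this identity on the LHS, the $-\tfrac{g}{2}\langle \nabla h,\nabla|\nabla f|^2\rangle$ contribution already matches the second term on the RHS, while the Leibniz rule collapses the remaining two terms of the LHS into $\int_\XX \langle \nabla f, \nabla\bigl(g \langle \nabla f,\nabla h\rangle\bigr)\rangle\,\di\meas$.

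Because $f \in \mathrm{Test}F \subset D(\Delta)$ and the function $g\langle \nabla f,\nabla h\rangle$ lies in $H^{1,2}(\XX,\dist,\meas)$ with bounded support (since $g$ has bounded support), one last integration by parts turns this quantity into $-\int_\XX g\Delta f\,\langle \nabla f,\nabla h\rangle\,\di\meas$, exactly the first term of the RHS, and the identity is proved. The one point that needs care is the regularity check $g\langle \nabla f,\nabla h\rangle \in H^{1,2}$ for $f,h \in \mathrm{Test}F$: I expect this to be the main (but still routine) technical obstacle, and it follows from the Sobolev calculus on $\RCD$ spaces developed in \cite{Gigli14}, namely that pointwise scalar products of gradients of test functions lie in $H^{1,2}$ and obey the Leibniz rule, so that multiplication by the Lipschitz function $g$ of bounded support preserves $H^{1,2}$.
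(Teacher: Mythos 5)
Your proof is correct, and it is the "direct calculation" the paper alludes to when citing the external reference: unfold $\langle\,\cdot\,,\dd g\otimes\dd h + g\,\mathrm{Hess}\,h\rangle_{\sf HS}$, rewrite $\mathrm{Hess}\,h(\nabla f,\nabla f)$ via \eqref{eq:hess} with $f_1=f_2=f$, collect terms by Leibniz into $\langle\nabla f,\nabla(g\langle\nabla f,\nabla h\rangle)\rangle$, and integrate by parts against $\Delta f$; the density reduction from $D(\Delta)$ to $\mathrm{Test}F$ is handled cleanly by the graph-norm continuity of both sides (using \eqref{eq:bochner2} for the Hessian term). The regularity check $g\langle\nabla f,\nabla h\rangle\in H^{1,2}$ indeed follows by polarizing $|\nabla(f\pm h)|^2\in H^{1,2}$ (since $\mathrm{Test}F$ is an algebra) and then multiplying by the bounded Lipschitz function $g$, exactly as you indicate.
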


\subsection{Structure of $\RCD(K, N)$ spaces and convergence}
Let $(\XX, \dist, \meas)$ be an $\RCD(K, N)$ space for some $K \in \mathbb{R}$ and some $N \in [1, \infty)$. The main purpose of this subsection is to provide a more detailed metric measure structure theory of $(\XX, \dist, \meas)$ we will need later. For our purpose it is enough to discuss the case when $K<0$.

First let us recall the Bishop-Gromov inequality (which is also valid for larger class, so-called $\CD(K,N)$ spaces, see \cite[Theorem 5.31]{Lott-Villani09}, \cite[Theorem 2.3]{Sturm06II}).
\begin{equation}\label{eq:bishgrom}
\frac{\mass(B_R(x))}{\mass(B_r(x))}\le \frac{\int_0^R \sinh(t\sqrt{\frac{-K}{N-1}})^{N-1}\dd{t}} {\int_0^r \sinh(t\sqrt{\frac{-K}{N-1}})^{N-1}\dd{t}}\qquad\forall x \in \XX,\forall r< R,
\end{equation}
where, in the case $N=1$, $\sinh(t\sqrt{\frac{-K}{N-1}})^{N-1}$ has to be interpreted as $1$.
It then follows from (\ref{eq:bishgrom}) that 
\begin{equation}\label{eq:growthes}
\frac{\mass(B_R(x))}{{\mass(B_r(x))}}\le C(K,N) \exp\left(C(K,N)\frac{R}{r}\right)\qquad\forall x \in \XX,\forall r< R
\end{equation}
and
\begin{equation}\label{eq:boundmeasballs}
\frac{\mass(B_r(x))}{{\mass(B_r(y))}}\le C(K,N) \exp\left(C(K,N)\frac{\dist(x,y)}{r}\right)\qquad \forall x, y\in\XX, \forall r>0
\end{equation}
are satisfied. It is well-known that from the Bishop-Gromov inequality it follows that the metric structure $(\XX, \dist)$ is proper, hence geodesic, being $(\XX, \dist)$ a length space. The length space property of $\RCD$ spaces follows quite easily from the so called \textit{Sobolev to Lipschitz} property, namely item 3) of Definition \ref{def:rcdkn} (e.g.\ \cite[Theorem 3.10]{AmbrosioGigliSavare12} and references therein).

The following elementary lemma will play a role later.
\begin{lemma}\label{lem:volume}
Let $(\XX,\dist,\meas)$ be an $\RCD(K, N)$ space. Then for any $t \in (0, 1]$, any $\alpha \in \mathbb{R}$, any $\beta \in (0, \infty)$ and any $x \in \XX$ we have
\begin{equation}\label{lem:bound}
\int_{\XX} \meas (B_{\sqrt{t}}(y))^{\alpha} \exp \left(-\frac{\beta \dist^2 (x, y)}{t}\right)\di\meas (y) \le C(K, N, \alpha, \beta )\meas (B_{\sqrt{t}}(x)))^{\alpha+1}.
\end{equation}
\end{lemma}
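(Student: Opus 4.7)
The plan is to decompose $\XX$ into annular shells centered at $x$ of radii comparable to multiples of $\sqrt{t}$, bound the integrand uniformly on each shell, estimate each shell's measure via the Bishop–Gromov consequences already recorded in \eqref{eq:growthes}--\eqref{eq:boundmeasballs}, and finally sum a convergent series in which the Gaussian factor defeats the exponential growth produced by Bishop–Gromov.

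First I would set $A_k\defeq B_{(k+1)\sqrt t}(x)\setminus B_{k\sqrt t}(x)$ for $k\in\N$, so that $\XX=\bigsqcup_{k\ge 0}A_k$. On $A_k$ the Gaussian factor is dominated: $\exp(-\beta \dist^2(x,y)/t)\le \exp(-\beta k^2)$. The volume of $A_k$ is bounded by $\mass(B_{(k+1)\sqrt t}(x))$, and \eqref{eq:growthes} with $R=(k+1)\sqrt t$ and $r=\sqrt t$ yields
\[
\mass(B_{(k+1)\sqrt t}(x))\le C(K,N)\exp\bigl(C(K,N)(k+1)\bigr)\mass(B_{\sqrt t}(x)).
\]
Note here I am using $t\le 1$ so that $R/r=k+1$ regardless of $t$. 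Next, for $y\in A_k$ we have $\dist(x,y)\le(k+1)\sqrt t$, so \eqref{eq:boundmeasballs} with $r=\sqrt t$ gives a two-sided bound
\[
C(K,N)^{-1}e^{-C(K,N)(k+1)}\mass(B_{\sqrt t}(x))\le \mass(B_{\sqrt t}(y))\le C(K,N)e^{C(K,N)(k+1)}\mass(B_{\sqrt t}(x)),
\]
and raising to the power $\alpha\in\R$ (taking one side or the other according to the sign of $\alpha$) produces
\[
\mass(B_{\sqrt t}(y))^{\alpha}\le C(K,N,\alpha)\exp\bigl(C(K,N,\alpha)(k+1)\bigr)\mass(B_{\sqrt t}(x))^{\alpha}.
\]

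Combining the three estimates on each $A_k$ and integrating,
\[
\int_{A_k}\mass(B_{\sqrt t}(y))^{\alpha}e^{-\beta\dist^2(x,y)/t}\di\mass(y)\le C(K,N,\alpha)\exp\!\bigl(C(K,N,\alpha)(k+1)-\beta k^2\bigr)\mass(B_{\sqrt t}(x))^{\alpha+1}.
\]
Summing over $k\ge 0$, the series $\sum_k \exp(C(K,N,\alpha)(k+1)-\beta k^2)$ converges because $\beta>0$, and its sum depends only on $K,N,\alpha,\beta$, yielding the claimed inequality.

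There is no genuine obstacle here: the only care required is that the doubling/Bishop–Gromov bounds give \emph{exponential} (rather than polynomial) control in $k$, so it is essential that $\beta>0$ is strictly positive to produce the Gaussian decay that absorbs these exponentials. The restriction $t\le 1$ is used precisely to make the ratios $R/r$ in \eqref{eq:growthes} and $\dist(x,y)/r$ in \eqref{eq:boundmeasballs} bounded by $k+1$ uniformly in $t$; otherwise additional $1/\sqrt t$ factors would appear inside the exponentials.
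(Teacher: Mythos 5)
Your proof is correct and follows essentially the same strategy as the paper: decompose $\XX$ into annuli around $x$, bound ball volumes via the Bishop--Gromov consequences \eqref{eq:growthes}--\eqref{eq:boundmeasballs}, and sum a series in which the Gaussian factor defeats the exponential volume growth. The paper first normalizes to $\beta=t=1$ by rescaling the distance and then uses dyadic annuli $B_{2^{j+1}}(x)\setminus B_{2^j}(x)$ over all $j\in\mathbb{Z}$, whereas you keep $\beta,t$ explicit and use unit-width annuli at scale $\sqrt t$; this is a minor, equivalent variation of the same argument.
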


\begin{proof}
Considering a rescaling $\sqrt{\beta /t} \cdot \dist$ with (\ref{eq:bishgrom}), it is enough to prove (\ref{lem:bound})  assuming  $\beta=t=1$. Then by (\ref{eq:growthes}) and (\ref{eq:boundmeasballs})
\begin{align}
&\int_{\XX} \meas (B_{1}(y))^{\alpha} \exp \left(-\dist^2 (x, y)\right)\di\meas (y) \nonumber \\
&=\sum_{j=-\infty}^{\infty}\int_{B_{2^{j+1}}(x) \setminus B_{2^j}(x)}\meas (B_{1}(y))^{\alpha} \exp \left(-\dist^2 (x, y)\right)\di\meas (y) \nonumber \\
&\le C(K, N)\meas (B_1(x))^{\alpha} \sum_{j=-\infty}^{\infty}\int_{B_{2^{j+1}}(x) \setminus B_{2^j}(x)}\exp \left( C(\alpha, K, N)2^{j+1}-2^{2j}\right)\di \meas(y) \nonumber \\
&= C(K, N)\meas (B_1(x))^{\alpha} \sum_{j=-\infty}^{\infty}\meas (B_{2^{j+1}}(x) \setminus B_{2^j}(x))\exp \left( C(\alpha, K, N)2^{j+1}-2^{2j}\right) \nonumber \\
&\le C(K, N) \meas (B_1(x))^{\alpha} \sum_{j=-\infty}^{\infty} \meas (B_1(x)) \cdot  \exp \left( C(K, N)2^j\right) \cdot \exp \left( C(\alpha, K, N)2^{j+1}-2^{2j}\right) \nonumber \\
&\le C(\alpha, K, N) \meas (B_1(x))^{\alpha+1}. \notag\qedhere
\end{align}
\end{proof}

For the definition of  \textit{pointed measured Gromov-Hausdorff convergence} and the following compactness result we refer, for instance, to \cite[Section 3]{GMS15}.

\begin{theorem}\label{thm:moduli}
If a sequence of pointed $\RCD(K, N)$ spaces $(\XX_i, \dist_i, \meas_i, x_i)$ satisfies
\[
0<\liminf_{i \to \infty}\meas_i(B_1(x_i)) \le \limsup_{i \to \infty}\meas_i(B_1(x_i))<\infty,
\]
then the sequence has a subsequence $(\XX_{i_j}, \dist_{i_j}, \meas_{i_j}, x_{i_j})$ ${\rm pmGH}$ converging  to a pointed $\RCD(K, N)$ space $(\X,\sfd,\mm,x)$. 
\end{theorem}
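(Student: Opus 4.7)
The plan is to combine Gromov precompactness at the metric level with a uniform mass bound on balls and the stability of the $\RCD(K,N)$ condition under pmGH convergence.

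First, I would establish pGH precompactness of the sequence $(\XX_i, \dist_i, x_i)$. By the Bishop-Gromov inequality \eqref{eq:bishgrom} and its consequence \eqref{eq:growthes}, each closed ball $\bar B_R(x_i)$ is uniformly totally bounded: the number of balls of radius $\eps$ needed to cover $\bar B_R(x_i)$ is bounded by a constant $N(K,N,R,\eps)$ independent of $i$. Since each $(\XX_i,\dist_i)$ is complete (hence, by completeness plus total boundedness on each ball, proper), Gromov's precompactness theorem yields a subsequence converging in the pGH sense to a proper pointed metric space $(\XX,\dist,x)$. I would fix an embedding of the $(\bar B_R(x_i),\dist_i)$ and $(\bar B_R(x),\dist)$ into a common proper metric space $(\ZZ,\dist_\ZZ)$ realizing the convergence (via a diagonal argument over $R\to\infty$).

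Next, I would upgrade to measure convergence. The assumption together with \eqref{eq:boundmeasballs} gives
\[
\sup_i \meas_i(B_R(x_i)) \le C(K,N,R)\cdot \limsup_i \meas_i(B_1(x_i)) < \infty \qquad\forall R>0.
\]
Pushing $\meas_i$ forward to $\ZZ$ and restricting to $\bar B_R(x)$ yields uniformly tight sequences of finite Borel measures, so Prokhorov gives a subsequence that converges weakly on each bounded set to a Radon limit $\meas$ on $\XX$, finite on bounded sets. The $\liminf$ hypothesis combined with \eqref{eq:bishgrom} (applied uniformly to propagate positivity of $\meas_i$ on balls centered at all $y$ within bounded distance of $x_i$) forces $\meas$ to give positive mass to every ball of $\XX$, so $\supp(\meas)=\XX$ and $(\XX,\dist,\meas)$ is a bona fide metric measure space; the convergence $(\XX_i,\dist_i,\meas_i,x_i)\to(\XX,\dist,\meas,x)$ in the pmGH sense follows.

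Finally, I would invoke the stability of the $\RCD(K,N)$ condition under pmGH convergence: the $\CD(K,N)$ condition (formulated via entropy convexity along optimal transport) is closed under pmGH limits by Lott-Villani/Sturm, and the Hilbertianity of $W^{1,2}$ passes to limits via stability of the Cheeger energy and the heat flow (Ambrosio-Gigli-Savaré, Gigli-Mondino-Savaré), so the limit $(\XX,\dist,\meas,x)$ is an $\RCD(K,N)$ space.

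The main substantive obstacle is the stability of $\RCD(K,N)$ under pmGH limits, which is nontrivial and relies on the $\Gamma$-convergence of Cheeger energies plus the stability of the Bakry-Émery/entropy-convexity formulations; the precompactness at the metric and measure levels is comparatively routine given the Bishop-Gromov inequality. A secondary subtlety is ensuring $\supp(\meas)=\XX$ in the non-compact setting, which requires transporting the lower mass bound at the base points to all of $\XX$ via uniform volume comparison.
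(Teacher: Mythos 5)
The paper does not give a proof of this theorem; it states it as a known compactness result and refers to \cite[Section 3]{GMS15}. Your sketch correctly reconstructs the standard argument from that literature -- uniform total boundedness of balls from Bishop--Gromov giving pGH precompactness via Gromov's theorem, uniform mass bounds plus Prokhorov yielding measure convergence in a common realizing space, propagation of the positive lower mass bound to ensure $\supp(\meas)=\XX$, and finally the stability of $\RCD(K,N)$ under pmGH limits (via stability of $\CD(K,N)$ in the Lott--Sturm--Villani sense together with Mosco/$\Gamma$-convergence of Cheeger energies to pass infinitesimal Hilbertianity to the limit) -- so this is the same approach the cited reference takes, and your identification of RCD-stability as the substantive ingredient is accurate.
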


Next we introduce the notion of \textit{tangent cones} 

\begin{definition}[Tangent cones]\label{def:tangen}Let $(\XX,\dist,\meas)$ be an $\RCD(K, N)$ space.
For $x \in\XX$, we denote by $\mathrm{Tan}(\XX, \dist,\meas,x)$ the set of tangent cones to $(\XX,\dist,\meas)$ at $x$: the collection of all isomorphism  classes of 
pointed metric measure spaces $(\YY, \dist_{\YY},\meas_{\YY},y)$ such that, as $i\to\infty$, one has
\begin{equation}\label{eq:ulla_ulla}
\left(\XX, \frac{1}{r_i}\dist, \frac{1}{\meas (B_{r_i}(x))}\meas, x\right) \stackrel{\mathrm{pmGH}}{\to} (\YY, \dist_{\YY},\meas_{\YY},y) 
\end{equation}
for some $r_i\to 0^+$.
\end{definition}

Note that Theorem \ref{thm:moduli} proves $\mathrm{Tan}(\XX, \dist,\meas,x) \neq \emptyset$ for any $x \in \XX$.
We are now in a position to introduce the key notions of   \textit{regular sets} and the \textit{essential dimension} as follows.

\begin{definition}[Regular set $\mathcal{R}_k$]Let $(\XX,\dist,\meas)$ be an $\RCD(K, N)$ space.
For any $k \geq 1$, we denote by $\mathcal{R}_k$ the $k$-dimensional regular set  of $(X, \dist, \meas)$, 
namely the set of points $x \in\XX$ such that
\[
\mathrm{Tan}(X, \dist,\meas,x) =\left\{ \left(\mathbb{R}^k, \dist_{\mathbb{R}^k},(\omega_k)^{-1}\haus^k,0_k\right) \right\},
\]
where $\omega_k$ is the $k$-dimensional volume of the unit ball in $\mathbb{R}^k$ with respect to the $k$-dimensional Hausdorff measure $\haus^k$. 
\end{definition}

The following result is proved in \cite[Theorem 0.1]{bru2018constancy}.

\begin{theorem}[Essential dimension]\label{th: RCD decomposition} Let $(\X,\sfd,\mm)$ be an $\RCD(K,N)$ space. Then there exists a unique integer $n\in [1,N]$, called the \textit{essential dimension of $(\XX, \dist, \meas)$}, denoted by $\mathrm{essdim}(\XX)$, such that
\[
\meas(X\setminus \mathcal{R}_n\bigr)=0.
\]
\end{theorem}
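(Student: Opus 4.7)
The plan is to combine the already-known existence of a Mondino--Naber-type decomposition into regular sets $\mathcal{R}_k$ with a ``constancy of dimension'' argument that forces only one value of $k$ to carry positive $\mm$-measure. More precisely, by the structure theory developed by Mondino--Naber and its refinements by Kell--Mondino, Gigli--Pasqualetto and De Philippis--Marchese--Rindler, one knows that for $\mm$-a.e.\ $x\in\XX$ there exists at least one Euclidean tangent $(\mathbb{R}^{k(x)},\dist_{\mathbb{R}^{k(x)}},\omega_{k(x)}^{-1}\haus^{k(x)},0)\in\mathrm{Tan}(\XX,\dist,\mm,x)$ with $k(x)\in\mathbb{N}\cap[1,N]$, and moreover $\mm\llcurly \haus^{k(x)}$ on a rectifiable set containing a full measure subset of $\mathcal{R}_{k(x)}$. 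Hence at the very least one has a disjoint decomposition
\[
\XX \;=\; \mathcal{N}\cup\bigcup_{k=1}^{\lfloor N\rfloor}\mathcal{R}_k \qquad \text{with }\mm(\mathcal{N})=0,
\]
so the existence of integers $k$ with $\mm(\mathcal{R}_k)>0$ is already settled; the content of the theorem is the \emph{uniqueness} of such an integer.

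To get uniqueness I would argue by contradiction: suppose $k_1<k_2$ are both such that $\mm(\mathcal{R}_{k_1}),\mm(\mathcal{R}_{k_2})>0$, and pick points $x_j\in\mathcal{R}_{k_j}$ that are simultaneously Lebesgue density points of $\mathcal{R}_{k_j}$ and points of approximate tangency of the corresponding $\haus^{k_j}$-rectifiable structure. The strategy is then to propagate the Euclidean regularity from $x_{k_1}$ to $x_{k_2}$ via an iterated tangent / splitting argument. The two ingredients are: (i) the iterated tangent principle, which ensures that at $\mm$-a.e.\ point every element of $\mathrm{Tan}$ of a tangent is again a tangent at the base point, and (ii) the Gigli splitting theorem, which upgrades any tangent splitting of the form $\mathbb{R}^h\times Y$ into a global splitting of the pointed tangent. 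Combined, these tools show that the set $\{k(x)=k\}$ is stable under blow-up and cannot decrease (a tangent at a point whose tangent already contains $\mathbb{R}^k$ must itself contain $\mathbb{R}^k$). One then uses a covering/good-point argument, together with the fact that density points of $\mathcal{R}_{k_2}$ have arbitrarily small balls of almost full $\mm$-measure inside $\mathcal{R}_{k_2}$, to produce a blow-up sequence at $x_{k_2}$ that must simultaneously be $(\mathbb{R}^{k_2},0)$ (by $x_{k_2}\in\mathcal{R}_{k_2}$) and must contain a further blow-up isomorphic to a factor $\mathbb{R}^{k_1}$ coming from nearby $\mathcal{R}_{k_1}$ points; by iterating one forces $k_1=k_2$, contradiction.

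A more quantitative version of this, which is actually the route pursued in \cite{bru2018constancy}, replaces the soft iterated-tangent argument by an $\eps$-regularity statement derived from the Bishop--Gromov and Bochner inequalities: one shows that if on a ball $B_r(x)$ one has quantitative $\eps$-closeness to $\mathbb{R}^{k_1}\times Z$ in the measured Gromov--Hausdorff sense, and simultaneously $\eps$-closeness to $\mathbb{R}^{k_2}$ at some comparable scale, then necessarily $k_1=k_2$. This $\eps$-regularity piece is the technical heart and is what makes the uniqueness of the essential dimension an intrinsically metric-measure statement rather than a purely topological one.

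The main obstacle I expect is precisely this last step: the dichotomy ``Euclidean vs.\ essentially lower-dimensional'' needs to hold in a quantitative, non-collapsing-stable form under pmGH convergence, and avoiding circularity with the very notion of essential dimension requires carefully using only the Bochner inequality, the splitting theorem and volume monotonicity. Once the $\eps$-regularity is in hand, the conclusion follows by standard differentiation of measures applied to $\mm\mres\mathcal{R}_{k_j}$ at a density point, which produces the Euclidean blow-up of the prescribed dimension and clashes with the $\eps$-regularity output unless all indices $k$ with positive $\mm$-measure coincide.
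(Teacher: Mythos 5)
The paper does not prove this theorem; it is quoted directly as \cite[Theorem 0.1]{bru2018constancy}, so there is no internal proof to compare against. Your sketch, however, contains a genuine gap in the uniqueness step. The soft iterated-tangent argument cannot close the contradiction you set up: if $x_{k_2}$ is a density point of $\mathcal{R}_{k_2}$, then $\mathcal{R}_{k_1}$ has $\mm$-density zero at $x_{k_2}$, so the normalized blow-up limit at $x_{k_2}$ (as in \eqref{eq:ulla_ulla}) is $(\mathbb{R}^{k_2},\omega_{k_2}^{-1}\haus^{k_2},0)$ and does not ``see'' nearby $\mathcal{R}_{k_1}$ points at all; there is no $\mathbb{R}^{k_1}$ factor for it to contain. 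Symmetrically, blow-ups at density points of $\mathcal{R}_{k_1}$ only see $\mathbb{R}^{k_1}$. A priori, the decomposition $\XX=\mathcal{N}\cup\bigcup_k\mathcal{R}_k$ with $\mm(\mathcal{N})=0$ could have several strata of positive measure coexisting, each perfectly compatible with the iterated tangent principle and propagation of regularity at its own density points; no pointwise blow-up or covering argument can rule that out. This is exactly why constancy of the dimension is a non-trivial, genuinely global theorem rather than a corollary of Mondino--Naber.

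Your description of the ``quantitative $\eps$-regularity'' alternative also does not match what \cite{bru2018constancy} actually does. Brué--Semola do not prove a closeness/rigidity dichotomy on a single ball. They construct regular Lagrangian flows of Sobolev vector fields and prove Lusin-type Lipschitz estimates for the flow maps; constancy of dimension then follows because one can transport, with a.e.\ bi-Lipschitz control on large sets, a neighbourhood of a density point of one stratum onto a neighbourhood of a density point of another, and bi-Lipschitz maps preserve the dimension of $\haus^k$-rectifiable pieces. This is a transport-of-structure argument \emph{across} the space, not a local $\eps$-regularity statement, and it is precisely what overcomes the obstruction in the previous paragraph. (Deng's alternative proof via Hölder continuity of tangent cones along geodesics is similarly non-local.) Your closing paragraph correctly senses that a non-trivial quantitative input is needed, but the missing ingredient is a global connectedness/transport mechanism, not a rigidity statement on a ball.
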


\begin{remark}\label{rem:essmetric}
The essential dimension is a purely metric concept, actually it is equal to the maximal number $n \in \mathbb{N}$ satisfying
\begin{equation}\notag
\left(\XX, \frac{1}{r_i}\dist, x\right)  \stackrel{\mathrm{pGH}}{\to}(\mathbb{R}^n, \dist_{\mathbb{R}^n}, 0_n)
\end{equation}
for some $x \in \XX$ and some $r_i \to 0^+$ because of the splitting theorem \cite[Theorem 1.4]{Gigli13} and the phenomenon of \emph{propagation of regularity}. See  \cite[Remark 4.3]{kitabeppu2017sufficient}, \cite[Proposition 2.4]{honda2019sphere} and \cite{BruPasSem20}. \fr
\end{remark}

Next let us introduce  a relationship between $\meas$ and the Hausdorff measure of the essential dimension. See \cite{AHT17, DPMR16, GP16-2, KelMon16} for the detail.

\begin{theorem}\label{thm:RN}
Let $(\X,\sfd,\mm)$ be an $\RCD(K,N)$ space and let $n$ be its   essential dimension. Then $\mm\ll \haus^n\res\mathcal{R}_n$. Also, letting  $\meas=\theta\haus^n\res\mathcal{R}_n$ and 
\begin{equation}\label{eq:rnstar}
{\mathcal R}_n^*:=\left\{x\in\mathcal{R}_n:\
\exists\lim_{r\rightarrow 0^+}\frac{\meas(B_r(x))}{\omega_n r^n}\in (0,\infty)\right\}
\end{equation}
we have that  $\mass(\mathcal{R}_n\setminus\mathcal{R}_n^*)=0$, $\mass\res\mathcal{R}_n^*$ and 
$\haus^n\res\mathcal{R}_n^*$ are mutually absolutely continuous and
\[
\lim_{r\rightarrow 0^+}\frac{\meas(B_r(x))}{\omega_n r^n}=\theta(x)
\qquad\text{for $\mass$-a.e.\ $x\in\mathcal{R}_n^*$.}
\]
Moreover  $\haus^n(\mathcal{R}_n\setminus\mathcal{R}_n^*)=0$ 
if $n=N$.
\end{theorem}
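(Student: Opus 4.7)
The plan is to build the theorem on top of the Mondino–Naber rectifiability theorem together with a blow‑up analysis of the measure at regular points and standard results about differentiation of Radon measures on doubling spaces.

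First I would recall that by the Mondino–Naber structural theorem, for each $k\ge 1$ the set $\mathcal{R}_k$ is $(\mm,k)$‑rectifiable: up to an $\mm$‑null set it is covered by countably many Borel pieces $E_i\subseteq \mathcal{R}_k$, each of which is bi‑Lipschitz to a Borel subset of $\R^k$, with bi‑Lipschitz constants that can be chosen arbitrarily close to $1$ (this is the refinement established in \cite{AHT17, DPMR16, GP16-2, KelMon16}). Combining this with the Bishop–Gromov inequality \eqref{eq:bishgrom}, which provides the $\mm$‑upper bound $\mm(B_r(x))\le C r^n$ locally on $\mathcal{R}_n$, a Federer–type covering argument yields $\mm\res\mathcal{R}_n \ll \haus^n\res\mathcal{R}_n$.

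Next I would establish the existence of the limit defining $\theta$ and the density formula. At every $x\in\mathcal{R}_n$ the definition of regular set and pmGH compactness (Theorem \ref{thm:moduli}) give
\begin{equation*}
\left(\X,\tfrac{1}{r}\sfd,\tfrac{1}{\mm(B_r(x))}\mm,x\right)\xrightarrow{\mathrm{pmGH}}\left(\R^n,\sfd_{\R^n},\omega_n^{-1}\haus^n,0_n\right)\quad\text{as } r\to 0^+.
\end{equation*}
Testing this convergence on balls (which is legitimate for $\mm$‑a.e.\ radius thanks to the doubling property \eqref{eq:growthes}) gives $\mm(B_{\lambda r}(x))/\mm(B_r(x))\to\lambda^n$ for every $\lambda>0$. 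Combined with the Lebesgue differentiation theorem on the doubling metric measure space $(\X,\sfd,\mm)$, applied to the Radon–Nikodym derivative $\theta=\d\mm/\d(\haus^n\res\mathcal{R}_n)$, this yields that at $\mm$‑a.e.\ $x\in\mathcal{R}_n$ the limit $\lim_{r\to 0^+}\mm(B_r(x))/(\omega_n r^n)$ exists in $(0,\infty)$ and coincides with $\theta(x)$. This gives $\mm(\mathcal{R}_n\setminus\mathcal{R}_n^*)=0$ and the mutual absolute continuity of $\mm\res\mathcal{R}_n^*$ and $\haus^n\res\mathcal{R}_n^*$ simultaneously, since $\theta$ is $\mm$‑a.e.\ positive and finite there.

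The step I expect to be the main obstacle is the final claim $\haus^n(\mathcal{R}_n\setminus\mathcal{R}_n^*)=0$ when $n=N$, because a priori $\mm\ll\haus^n$ does not give information about $\haus^n$‑null sets. Here the idea is to exploit the \emph{sharp} Bishop–Gromov monotonicity in the critical case $n=N$: the map
\begin{equation*}
r\mapsto \frac{\mm(B_r(x))}{\int_0^r \sinh\!\bigl(t\sqrt{-K/(N-1)}\bigr)^{N-1}\d t}
\end{equation*}
is non‑increasing, so the density $\theta_N(x)=\lim_{r\to 0^+}\mm(B_r(x))/(\omega_N r^N)$ exists in $[0,\infty]$ for \emph{every} $x\in\X$, not just $\mm$‑a.e. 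Using again the tangent cone identification on $\mathcal{R}_N$ together with the standard upper density bound $\limsup_{r\to 0^+}\haus^N(B_r(x))/(\omega_N r^N)\le 1$ for $\haus^N$‑a.e.\ $x$, one shows that $\theta_N(x)\in(0,\infty)$ for $\haus^N$‑a.e.\ $x\in\mathcal{R}_N$, hence $\haus^N(\mathcal{R}_N\setminus\mathcal{R}_N^*)=0$. This final implication is where the delicate interplay between rectifiability, Bishop–Gromov monotonicity, and pmGH stability of the blow‑ups must be used most carefully.
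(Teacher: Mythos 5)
Your outline (Mondino--Naber rectifiability, Lebesgue differentiation, Bishop--Gromov monotonicity for $n=N$) matches the route taken in the literature that the paper cites, but there is a genuine gap in your first step. You claim that the Bishop--Gromov inequality \eqref{eq:bishgrom} ``provides the $\mm$-upper bound $\mm(B_r(x))\le Cr^n$ locally on $\mathcal{R}_n$.'' This is false. Bishop--Gromov gives a relative doubling estimate $\mm(B_R(x))\le C(R/r)^N\mm(B_r(x))$ and, consequently, a \emph{lower} bound $\mm(B_r(x))\ge c(x)r^N$ for small $r$; it does not bound $\mm(B_r(x))/r^n$ from above at all, and indeed no such bound can hold in general, since the density $\theta=\d\mm/\d(\haus^n\res\mathcal{R}_n)$ may be unbounded (consider a weighted manifold $(M,g,e^{-V}\mathrm{Vol}_g)$ with $\inf V=-\infty$). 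With no upper density estimate in hand, your ``Federer-type covering argument'' has no input. The absolute continuity $\mm\ll\haus^n\res\mathcal{R}_n$ is precisely the deep content of the works \cite{AHT17,DPMR16,GP16-2,KelMon16} that the paper cites for this theorem: Mondino--Naber rectifiability alone does not rule out a singular part of $\mm$ relative to $\haus^n$ on $\mathcal{R}_n$, and that question was genuinely open after rectifiability was established. It was closed by substantial arguments (De Philippis--Marchese--Rindler via a general result on rectifiable measures and Alberti representations, Gigli--Pasqualetto via the dimension of the tangent module, Kell--Mondino by yet another method). None of these can be replaced by Bishop--Gromov.

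Granting the absolute continuity, your steps 2 and 3 are the right idea but need tightening. In step 2 the ratio convergence $\mm(B_{\lambda r}(x))/\mm(B_r(x))\to\lambda^n$ is not the operative tool; what you actually want is the Besicovitch-type differentiation theorem for $\mm$ against $\haus^n\res\mathcal{R}_n$ (available because $\mm$ is doubling), combined with the fact that rectifiability forces $\haus^n(B_r(x)\cap\mathcal{R}_n)/(\omega_nr^n)\to 1$ at $\haus^n$-a.e.\ point. In step 3, the two things you should supply explicitly for $n=N$ are: (i) $\theta_N(x)>0$ for \emph{every} $x$, because Bishop--Gromov already yields $\mm(B_r(x))\ge c(x)r^N$; and (ii) $\theta_N(x)<\infty$ for $\haus^N$-a.e.\ $x$, via the density comparison theorem (as in Lemma \ref{le:densh}, essentially \cite[Theorem 2.4.3]{AmbrosioTilli04}): if $\limsup_r\mm(B_r(x))/(\omega_Nr^N)\ge t$ on a Borel set $A$, then $\haus^N(A)\le C(N)\mm(A)/t$, and letting $t\to\infty$ while using local finiteness of $\mm$ gives $\haus^N(\{\theta_N=\infty\})=0$. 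The ``upper density bound for $\haus^N$'' you invoke is not the relevant ingredient here. Finally, be aware that the paper does not prove Theorem \ref{thm:RN}; it records it as a known result, so your target comparison is really the cited literature rather than any proof in the present manuscript.
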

A more general and classical result  concerning densities, that we shall use later on, is  the following  (see e.g.\ \cite[Theorem 2.4.3]{AmbrosioTilli04} for a proof):
\begin{lemma}\label{le:densh}
Let $(\X,\sfd,\mm)$ be a metric measure space, $\alpha\geq 0$ and $A\subset \X$ a Borel subset such that
\[
\limsup_{r\to 0^+}\frac{\mm(B_r(x))}{r^\alpha}>0\qquad\forall x\in A.
\]
Then $\haus^\alpha\mres A$ is a Radon measure absolutely continuous w.r.t.\ $\mm$.
\end{lemma}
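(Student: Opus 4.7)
The plan is to reduce to a uniform density lower bound on countably many pieces covering $A$, then run a Vitali-type covering argument to compare $\haus^\alpha$ with $\mm$ on each piece. Concretely, I would first write
\[
A=\bigcup_{k\in\N}A_k,\qquad A_k:=\Big\{x\in A:\ \limsup_{r\to 0^+}\frac{\mm(B_r(x))}{r^\alpha}>\tfrac1k\Big\},
\]
so that each $A_k$ is a Borel set on which the density is bounded below, in the limsup sense, by the constant $1/k$. Since a countable union of $\sigma$-finite absolutely continuous measures is $\sigma$-finite and absolutely continuous, it is enough to prove the conclusion for each $A_k$ in place of $A$.

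Fix $k$. For a Borel set $E\subset\X$, an open set $U\supset E$, and a threshold $\delta>0$, consider the Vitali-type fine cover
\[
\mathcal{F}:=\big\{B_r(x):\ x\in E\cap A_k,\ 0<r<\delta/10,\ B_r(x)\subset U,\ \mm(B_r(x))>\tfrac{1}{k}r^\alpha\big\}.
\]
By the defining property of $A_k$, the family $\mathcal{F}$ covers $E\cap A_k$ and each point is covered by balls of arbitrarily small radius. The $5r$-covering lemma (which holds in an arbitrary metric space) then yields a pairwise disjoint subfamily $\{B_{r_i}(x_i)\}_{i}\subset\mathcal{F}$ such that the enlarged balls $\{B_{5r_i}(x_i)\}_i$ still cover $E\cap A_k$. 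Hence
\[
\haus^\alpha_\delta(E\cap A_k)\le \omega_\alpha\sum_i (5r_i)^\alpha\le 5^\alpha\omega_\alpha k\sum_i\mm(B_{r_i}(x_i))\le 5^\alpha\omega_\alpha k\,\mm(U),
\]
where I used disjointness of the chosen balls and $B_{r_i}(x_i)\subset U$. Letting $\delta\to 0^+$ gives $\haus^\alpha(E\cap A_k)\le 5^\alpha\omega_\alpha k\,\mm(U)$, and then taking the infimum over open $U\supset E$ (together with outer regularity of the Borel measure $\mm$, which is finite on bounded sets) yields
\[
\haus^\alpha(E\cap A_k)\le 5^\alpha\omega_\alpha k\,\mm(E)\qquad\forall E\subset\X \text{ Borel.}
\]

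From this inequality both claims follow at once: choosing $E$ with $\mm(E)=0$ shows that $\haus^\alpha\mres A_k\ll\mm$, and choosing $E$ bounded shows that $\haus^\alpha\mres A_k$ is finite on bounded sets, hence Radon. Summing over $k$ (and absorbing the factor by $\sigma$-additivity) gives the statement for $A$. The only genuinely delicate point is the use of the $5r$-covering lemma in the non-doubling metric setting, but this is standard and requires no doubling hypothesis; once it is invoked the rest is bookkeeping.
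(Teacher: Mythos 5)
Your Vitali/$5r$-covering argument is the standard route to this kind of density comparison (it is essentially the proof of the cited result \cite[Theorem 2.4.3]{AmbrosioTilli04}), and for fixed $k$ it correctly gives
\[
\haus^\alpha(E\cap A_k)\le 5^\alpha\omega_\alpha\, k\,\mm(E)\qquad\text{for every Borel }E\subset\XX,
\]
hence $\haus^\alpha\mres A_k\ll\mm$ and finiteness of $\haus^\alpha\mres A_k$ on bounded sets. Since $\mm(E)=0$ forces $\haus^\alpha(E\cap A_k)=0$ for every $k$, countable subadditivity yields $\haus^\alpha\mres A\ll\mm$, and $\sigma$-finiteness of $\haus^\alpha\mres A$ also follows; that part is fine.

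The gap is in the very last step: ``summing over $k$ (and absorbing the factor by $\sigma$-additivity) gives the statement for $A$.'' Local finiteness does \emph{not} pass to countable increasing unions, and your constant grows linearly in $k$: for a fixed bounded $E$ the bound $\haus^\alpha(E\cap A_k)\le 5^\alpha\omega_\alpha k\,\mm(E)$ blows up, so you cannot conclude $\haus^\alpha(E\cap A)<\infty$. (Compare $\sum_n\delta_{1/n}$ on $\R$, a countable sum of Radon measures that is not locally finite near $0$.) In fact, with only the pointwise hypothesis $\limsup_{r\to 0^+}\mm(B_r(x))/r^\alpha>0$ the local-finiteness claim genuinely fails: take $\XX=[0,1]$, $\alpha=0$, $\mm=\sum_n 2^{-n}\delta_{x_n}$ with $\{x_n\}$ dense, and $A=\{x_n:n\in\N\}$; the hypothesis holds at every point of $A$, yet $\haus^0(A)=\infty$ while $A$ is bounded. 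Note that in the only place the paper invokes this lemma (Corollary~\ref{cor:convgt}) the density is assumed bounded below \emph{uniformly} on $A$, and under that uniform hypothesis your covering estimate applies with a single $k$-independent constant and yields the Radon conclusion at once. So: the covering argument and the absolute-continuity part are correct, but the Radon (local-finiteness) conclusion should not be derived by summing over the strata $A_k$; it requires the uniform lower bound.
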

The fact that $L^0(T(\X,\sfd,\mm))$ is a Hilbert module is an indication of the existence of some (weak) Riemannian metric on $\X$. This statement can easily be made more explicit by building upon the fact that such module has local dimension equal to the essential dimension of $\X$ (see  \cite{GP16}):
\begin{proposition}\label{prop:riemannianmetric}
Let $(\X,\sfd,\mm)$ be an $\RCD(K,N)$ space of essential dimension $n$. Then there is a unique $g\in L^0((T^*)^{\otimes 2}(\XX, \dist, \meas))$ such that 
\[
g(V_1\otimes V_2)=\langle V_1,V_2\rangle\qquad\mm\text{-a.e.,\ }\forall V_1, V_2\in L^0(T(\X,\sfd,\mm)).
\]
Moreover, $g$ satisfies 
\begin{equation}
\label{eq:normg}
|g|_{\mathrm{HS}}=\sqrt{n},\qquad \meas\text{-a.e.}.
\end{equation}
\end{proposition}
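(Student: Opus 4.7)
\medskip

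\noindent\textbf{Proof plan.} The object $g$ we need to construct is an $L^0$-linear functional on the tensor product module $L^0(T^{\otimes 2}(\X,\sfd,\mm))$, and the defining relation $g(V_1\otimes V_2)=\langle V_1,V_2\rangle$ prescribes it on simple tensors. My first step is therefore to check that the map
\[
B:L^0(T(\X,\sfd,\mm))\times L^0(T(\X,\sfd,\mm))\to L^0(\X,\mm),\qquad B(V_1,V_2):=\langle V_1,V_2\rangle,
\]
is $L^0$-bilinear, symmetric, and continuous (all three being immediate consequences of the fact that $L^0(T(\X,\sfd,\mm))$ is a Hilbert module, with the pointwise scalar product inherited by polarization of the pointwise norm). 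By the universal property of the tensor product of $L^0$-modules, $B$ factors uniquely through an $L^0$-linear continuous functional $\tilde g:L^0(T^{\otimes 2}(\X,\sfd,\mm))\to L^0(\X,\mm)$. Identifying this space with its double dual (or, more directly, using the Riesz isomorphism for the Hilbert module $L^0(T^{\otimes 2}(\X,\sfd,\mm))$), $\tilde g$ is represented by a unique element $g\in L^0((T^*)^{\otimes 2}(\X,\sfd,\mm))$. Uniqueness of $g$ is then automatic from the fact that simple tensors $V_1\otimes V_2$ generate $L^0(T^{\otimes 2}(\X,\sfd,\mm))$.

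\medskip

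\noindent For the norm identity \eqref{eq:normg}, I would invoke the key result from \cite{GP16} alluded to in the statement: since $\X$ has essential dimension $n$, the Hilbert module $L^0(T(\X,\sfd,\mm))$ has local dimension $n$, meaning one can cover $\X$ (up to an $\mm$-negligible set) by Borel sets $A_\alpha$ on each of which there exists a local orthonormal frame $(e_1^\alpha,\dots,e_n^\alpha)$, i.e.\ $\langle e_i^\alpha,e_j^\alpha\rangle=\delta_{ij}$ $\mm$-a.e.\ on $A_\alpha$, and every vector field on $A_\alpha$ is an $L^0$-combination of these. Let $(e^1_\alpha,\dots,e^n_\alpha)$ be the dual frame in $L^0(T^*(\X,\sfd,\mm))$ on $A_\alpha$, obtained via the Riesz isomorphism.

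\medskip

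\noindent On $A_\alpha$ the tensors $e^i_\alpha\otimes e^j_\alpha$ form a local orthonormal basis of $L^0((T^*)^{\otimes 2}(\X,\sfd,\mm))$ with respect to $\langle\cdot,\cdot\rangle_{\sf HS}$, by the defining property of the Hilbert-Schmidt scalar product on tensor products. The defining relation of $g$ forces
\[
g=\sum_{i=1}^n e^i_\alpha\otimes e^i_\alpha\qquad \mm\text{-a.e.\ on }A_\alpha,
\]
since both sides agree when paired with every simple tensor $e_k^\alpha\otimes e_\ell^\alpha$ (giving $\delta_{k\ell}$), hence on all of $L^0(T^{\otimes 2}(\X,\sfd,\mm))\restr{A_\alpha}$ by $L^0$-bilinearity. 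Computing the HS norm of this expression using the orthonormality of $\{e^i_\alpha\otimes e^j_\alpha\}_{i,j}$ yields $|g|_{\sf HS}^2=\sum_{i=1}^n 1=n$ $\mm$-a.e.\ on $A_\alpha$, and patching over $\alpha$ gives the claim.

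\medskip

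\noindent The main (minor) obstacle is the slightly delicate bookkeeping with the universal property of $L^0$-tensor products and the identification of $L^0((T^*)^{\otimes 2}(\X,\sfd,\mm))$ with the $L^0$-dual of $L^0(T^{\otimes 2}(\X,\sfd,\mm))$; once this is granted, both existence/uniqueness and the norm formula reduce to the purely pointwise computation in a local orthonormal frame, which is available precisely because the local dimension of the tangent module coincides with the essential dimension $n$.
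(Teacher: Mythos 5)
Your proof is correct and follows exactly the route the paper indicates (which in fact states the proposition without a formal proof, pointing only to the local-dimension fact from \cite{GP16}): define $g$ through the pointwise scalar product, then compute $|g|_{\sf HS}$ in a local orthonormal frame available because the tangent module has local dimension $n$. The one small point worth tightening is the extension of $\tilde g$ from the algebraic tensor product to its completion: you invoke continuity but don't verify the bound, though this is immediate once you have the local frame since $\bigl|\tilde g\bigl(\sum_i V_i\otimes W_i\bigr)\bigr|=\bigl|\sum_k a_{kk}\bigr|\le\sqrt{n}\,\bigl|\sum_i V_i\otimes W_i\bigr|_{\sf HS}$ $\mm$-a.e.\ by Cauchy--Schwarz on the coefficient matrix $(a_{kl})$; alternatively, you could bypass the universal-property step entirely by taking $g:=\sum_i e^i_\alpha\otimes e^i_\alpha$ on $A_\alpha$ as the definition, checking frame-independence, and then verifying the defining relation and the norm identity in one stroke.
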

We can use this `metric tensor' to  define the \textit{trace} of  any $T \in L^0((T^*)^{\otimes 2}(\XX, \dist,  \meas))$ by
\[
\mathrm{tr}(T):=\langle T, g\rangle_{\sf HS} \in L^0(\XX, \meas).
\]
Notice that by \eqref{eq:normg} and the Cauchy-Schwarz inequality it follows that if $T\in L^p_{loc}((T^*)^{\otimes 2}(\XX, \dist,  \meas))$, then $\mathrm{tr}(T)\in L^p_{loc}(\X,\sfd,\mm)$.

Finally let us end this subsection by recalling the lower semicontinuity of the essential dimensions with respect to pmGH convergence proved in \cite[Theorem 1.5]{kitabeppu2017sufficient}, where this is also understood as a consequence of $L^2_{{loc}}$-weak convergence of Riemannian metrics (see \cite[Remark 5.20]{AHPT}), and an alternative proof of the theorem below can be based on Remark \ref{rem:essmetric}.
\begin{theorem}\label{thm:loweress}
Let 
\[
(\XX_i, \dist_i, \meas_i, x_i) \stackrel{\mathrm{pmGH}}{\to} (\XX, \dist, \meas, x) 
\]
be a ${\rm pmGH}$ convergent sequence of pointed $\RCD(K, N)$ spaces. Then
\[
\liminf_{i \to \infty}\mathrm{essdim}(\XX_i) \ge \mathrm{essdim}(\XX).
\]
\end{theorem}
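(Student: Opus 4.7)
The plan is to use the purely metric characterization of essential dimension provided in Remark \ref{rem:essmetric}: for any $\RCD(K,N)$ space, $\mathrm{essdim}$ equals the largest integer $n$ such that some rescaling $(\cdot, r_i^{-1}\dist, z) \to (\mathbb{R}^n, \dist_{\mathbb{R}^n}, 0_n)$ in ${\rm pGH}$ for some base point $z$ and some $r_i \to 0^+$.

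First I would set $n := \mathrm{essdim}(\XX)$ and, by this characterization, pick $y \in \XX$ and scales $r_k \to 0^+$ with $(\XX, r_k^{-1}\dist, y) \to (\mathbb{R}^n, \dist_{\mathbb{R}^n}, 0_n)$ in ${\rm pGH}$. The hypothesis $(\XX_i, \dist_i, \meas_i, x_i) \to (\XX, \dist, \meas, x)$ in ${\rm pmGH}$ provides in particular ${\rm pGH}$ convergence of the underlying pointed metric spaces, so I can choose $y_i \in \XX_i$ with $y_i \to y$. For each fixed $k$, rescaling is continuous under ${\rm pGH}$ convergence, hence $(\XX_i, r_k^{-1}\dist_i, y_i) \to (\XX, r_k^{-1}\dist, y)$ in ${\rm pGH}$ as $i \to \infty$. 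Combining this with the convergence $(\XX, r_k^{-1}\dist, y) \to (\mathbb{R}^n, \dist_{\mathbb{R}^n}, 0_n)$ as $k \to \infty$, a standard diagonal argument yields indices $k(i) \to \infty$ (so that $r_{k(i)} \to 0^+$) such that
\[
\left(\XX_i, r_{k(i)}^{-1}\dist_i, y_i\right) \stackrel{\mathrm{pGH}}{\to} (\mathbb{R}^n, \dist_{\mathbb{R}^n}, 0_n) \qquad \text{as } i \to \infty.
\]

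Next I would convert this single-scale ${\rm pGH}$-proximity to $\mathbb{R}^n$ into a tangent-cone statement at some point of each large $\XX_i$. Here the plan is to invoke the propagation of regularity mentioned in Remark \ref{rem:essmetric} (see \cite{BruPasSem20}): if a ball in an $\RCD(K',N)$ space (with $|K'|$ small enough) is ${\rm pGH}$-close enough to the unit ball of $(\mathbb{R}^n, \dist_{\mathbb{R}^n})$, then inside that ball there exists a point whose full ${\rm pGH}$-tangent is isometric to $(\mathbb{R}^n, \dist_{\mathbb{R}^n}, 0_n)$. Applying this to the rescaled spaces $(\XX_i, r_{k(i)}^{-1}\dist_i)$, which are $\RCD(r_{k(i)}^2 K, N)$ with $r_{k(i)}^2 K \to 0$, I obtain a point $z_i \in \XX_i$ whose ${\rm pGH}$-tangent is $(\mathbb{R}^n, \dist_{\mathbb{R}^n}, 0_n)$. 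Since tangent cones at a fixed point are invariant under a global rescaling of the metric, this conclusion transfers to the original $(\XX_i, \dist_i)$. By the characterization this gives $\mathrm{essdim}(\XX_i) \geq n$ for all large $i$, and taking $\liminf$ yields the claim.

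The main obstacle is precisely the last step: passing from the existence of \emph{one} scale at which $\XX_i$ looks Euclidean to the existence of a point of $\XX_i$ with a full ${\rm pGH}$-tangent isometric to $\mathbb{R}^n$. This is the non-trivial structural input, relying on the iterative improvement of almost-Euclidean balls combined with the splitting theorem \cite[Theorem 1.4]{Gigli13}, which together constitute the propagation of regularity phenomenon in $\RCD$ theory. The earlier part of the argument, namely the construction of the diagonal scale $r_{k(i)}$, is by contrast elementary and uses only the stability of ${\rm pGH}$ convergence under rescaling together with a diagonal extraction.
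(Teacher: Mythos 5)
Your proposal follows the ``alternative proof based on Remark \ref{rem:essmetric}'' that the authors themselves sketch without carrying out; the paper's own treatment of Theorem \ref{thm:loweress} is simply a citation of \cite[Theorem 1.5]{Kita17}, together with a one-line mention of two other possible routes (the one via $L^2_{\mathrm{loc}}$-weak convergence of the Riemannian metrics $g_i$, cf.\ \cite[Remark 5.20]{AHPT}, and the one you chose). So your argument is genuinely different from the paper's cited source, but it coincides with the alternative the authors explicitly flag as viable, and the outline is correct: reduce via the purely metric characterization of essential dimension to a pGH statement, transport a near-Euclidean scale from $\XX$ to $\XX_i$ by a diagonal extraction, and then close with a propagation-of-regularity input.

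Two points deserve attention, one substantive and one cosmetic. The substantive one: the crux of the argument is precisely the $\eps$-regularity statement you invoke, namely that there is $\eps=\eps(N,n)>0$ such that if an $\RCD(-\eps,N)$ space has a unit ball $\eps$-pGH close to $B_1(0_n)\subset\R^n$, then its essential dimension is at least $n$. This cannot be proved by a soft compactness-and-contradiction argument, since that would already require the lower semicontinuity you are trying to establish; the only available route is quantitative, through $\delta$-splitting maps and the transformation/propagation theorems of \cite{bru2018constancy,BruPasSem20}, which is exactly the hard content behind Remark \ref{rem:essmetric}. You correctly identify this as the non-trivial input, but a careful write-up would state the needed $\eps$-regularity lemma explicitly and point to the precise statement in the references, since the derivation is not immediate from the characterization in Remark \ref{rem:essmetric} alone.

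The cosmetic one: in a possibly collapsed setting, a ball being pGH-close to $B_1(0_n)\subset\R^n$ at a single scale does not force a nearby point to have pGH-tangent exactly $\R^n$; the dimension can only increase as one zooms in (e.g.\ $\R^n\times[0,\tfrac1i]$ at scale $1$ is close to $\R^n$, yet interior points have $\R^{n+1}$-tangent). What propagation gives is a point whose pGH-tangent is $\R^m$ for some $m\geq n$, which together with the characterization from Remark \ref{rem:essmetric} still yields $\mathrm{essdim}(\XX_i)\geq n$ and thus the desired $\liminf$ inequality. The conclusion is unaffected, but the phrase ``full pGH-tangent is isometric to $\R^n$'' overstates what the structural input delivers and should be relaxed to $m\geq n$.
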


\subsection{Non-collapsed $\RCD(K, N)$ spaces}
Let us start recalling the following:
\begin{definition}[Non-collapsed $\RCD(K, N)$ space]\label{def:noncolla}
An $\RCD(K, N)$ space $(\X,\sfd,\mm)$ is said to be \textit{non-collapsed} if $\meas=\haus^N$.
\end{definition}
This definition was introduced in \cite[Definition 1.1]{DPG17} as a synthetic counterpart of non-collapsed Ricci limit spaces. As explained in the introduction, non-collapsed $\RCD(K, N)$ spaces have finer properties rather than general $\RCD(K, N)$ spaces already introduced in subsection \ref{subsec:diff}. Let us give one of the properties as follows (see \cite[Theorem 1.2]{DPG17}).

\begin{theorem}[From pGH to pmGH]\label{thm:pGH}
Let $K\in\R$, $N\in\N$ and $(\XX_i, \dist_i, \haus^N, x_i)$ be a sequence of pointed non-collapsed $\RCD(K, N)$ spaces. Then after passing to a subsequence, there exists a pointed proper geodesic space $(\XX, \dist, x)$ such that 
\[
(\XX_i, \dist_i, x_i) \stackrel{\mathrm{pGH}}{\to} (\XX, \dist, x).
\]
Moreover, if $\inf_i\haus^N(B_1(x_i))>0$, then $(\XX, \dist,\haus^N, x)$ is also a pointed non-collapsed $\RCD(K, N)$ space and the convergence of the $(\XX_i, \dist_i, \haus^N, x_i)$'s to such space is in the ${\rm pmGH}$ topology.
\end{theorem}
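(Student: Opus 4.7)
The plan is to split the theorem into two stages: first extracting a pGH limit via Gromov-type precompactness, and then, under the non-degeneracy assumption $v := \inf_i \haus^N(B_1(x_i)) > 0$, upgrading this to pmGH convergence and identifying the limit measure with $\haus^N$.

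For the first stage, the Bishop-Gromov inequality \eqref{eq:bishgrom} gives a uniform doubling constant on every bounded scale, depending only on $K$, $N$ and the scale. In particular the covering numbers of $B_R(x_i)$ at scale $\epsilon$ are uniformly bounded in $i$ for every $R, \epsilon > 0$. Gromov's precompactness theorem then yields, along a subsequence, pGH convergence to a pointed proper length space $(\XX, \dist, x)$, which is geodesic since each $(\XX_i, \dist_i)$ is.

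For the second stage, I would combine the hypothesis $v>0$ with the matching upper bound $\haus^N(B_1(x_i)) \leq C(K,N)$, which follows from Bishop-Gromov together with the density bound $\theta_N \leq 1$ of \eqref{eq:densityest1}: indeed, letting $r \to 0^+$ in \eqref{eq:bishgrom} and using $\haus^N(B_r(x_i))/(\omega_N r^N) \to \theta_N(x_i) \leq 1$ gives a uniform bound on $\haus^N(B_R(x_i))$ for any fixed $R$. Theorem \ref{thm:moduli} then produces, along a further subsequence, a pointed $\RCD(K,N)$ limit $(\XX, \dist, \mm, x)$ whose underlying metric space coincides with the pGH limit above.

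The main obstacle is identifying $\mm$ with $\haus^N$, which I would carry out by proving the inequalities $\mm \leq \haus^N$ and $\haus^N \leq \mm$ separately. For the upper bound, fix $y \in \XX$ and $y_i \in \XX_i$ with $y_i \to y$; at every continuity radius $r$ of $\mm$, pmGH convergence gives $\mm(B_r(y)) = \lim_i \haus^N(B_r(y_i))$, while the same Bishop-Gromov argument applied at $y_i$ yields $\haus^N(B_r(y_i))/(\omega_N r^N) \leq \psi_{K,N}(r)$ for a function $\psi_{K,N}(r) \to 1$ as $r \to 0^+$. Dividing by $\omega_N r^N$ and letting $r \to 0^+$ yields $\theta_N[\XX, \dist, \mm](y) \leq 1$ at every point, and the standard differentiation theorem for the doubling measure $\mm$ then gives $\mm \leq \haus^N$. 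For the reverse inclusion I would invoke the lower semicontinuity of $\haus^N$ under Hausdorff convergence of closed balls: at a continuity radius $r$,
\[
\haus^N(\overline{B_r(y)}) \leq \liminf_{i\to\infty} \haus^N(\overline{B_r(y_i)}) = \lim_{i\to\infty} \mm_i(\overline{B_r(y_i)}) = \mm(\overline{B_r(y)}).
\]
Combining the two bounds at a dense set of radii and invoking Radon regularity yields $\mm = \haus^N$, and hence the claimed pmGH convergence to the non-collapsed $\RCD(K,N)$ space $(\XX, \dist, \haus^N, x)$.
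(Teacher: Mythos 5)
The paper does not prove this statement; it is quoted from \cite[Theorem 1.2]{DPG17}, whose proof hinges on the volume continuity theorem (stated here as Theorem \ref{thm:conthN}, the RCD generalization of Colding's volume convergence). Your first stage (Gromov precompactness from Bishop--Gromov) and the derivation of the uniform two-sided bound $0 < v \le \haus^N(B_1(x_i)) \le C(K,N)$ are both fine, as is the deduction $\theta_N[\XX,\dist,\mm](y) \le 1$ at every point $y$ of the limit.

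The genuine gap is in the reverse inequality $\haus^N \le \mm$. You invoke ``lower semicontinuity of $\haus^N$ under Hausdorff convergence of closed balls,'' but this is not a valid general principle: if $K_i$ is a finite $1/i$-net of $[0,1]^N$ then $K_i \to [0,1]^N$ in Hausdorff distance while $\haus^N(K_i) = 0 < 1 = \haus^N([0,1]^N)$, so $\haus^N$ is \emph{not} lower semicontinuous under Hausdorff (or Gromov--Hausdorff) convergence for $N \ge 1$ (Go\l\c{a}b's theorem gives it only for $\haus^1$ on connected compacta). What is true in the present setting is precisely the content of Theorem \ref{thm:conthN}: continuity of $B \mapsto \haus^N(B)$ on unit balls of $\RCD(K,N)$ spaces. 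That theorem is not a soft covering or semicontinuity argument; its proof (in \cite{DPG17}, following \cite{Colding97} and \cite{Cheeger-Colding97I}) uses the almost-rigidity of Bishop--Gromov together with a Reifenberg-type decomposition. So your proposal replaces the single deep ingredient of the argument with a false lemma; the remaining ingredients you assemble around it are standard and correct. A secondary, more minor, caveat: passing from ``$\theta_N \le 1$ everywhere'' to ``$\mm \le \haus^N$'' via the density comparison theorem in a general metric space costs a dimensional factor $2^N$ (Federer 2.10.19, or \cite[Theorem 2.4.3]{AmbrosioTilli04} formulated with the spherical measure); to recover the sharp constant one should instead use the structure theory (Theorem \ref{thm:RN}), writing $\mm = \theta_N \haus^N \mres \mathcal{R}_N^*$ once the essential dimension of the limit is identified as $N$.
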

We remark that the above theorem is tightly related to the following continuity result, which is the generalization to the $\RCD$ class of the classical statement by Colding about volume convergence under lower Ricci bounds \cite{Colding97} (see \cite[Theorem 1.3]{DPG17}):
\begin{theorem}[Continuity of $\haus^N$]\label{thm:conthN}
For $K\in\R$, $N\in[1,\infty)$ let $\mathbb B(K,N)$ be the collection of (isometry classes of) open unit balls on $\RCD(K,N)$ spaces. Equip $\mathbb B(K,N)$ with the Gromov-Hausdorff distance.

Then the map $\mathbb B(K,N)\ni B\mapsto \haus^N(B)\in\R$ is continuous.
\end{theorem}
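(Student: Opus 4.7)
The statement is the $\RCD$ counterpart of Colding's volume convergence theorem \cite{Colding97}, first established in \cite[Theorem 1.3]{DPG17}. The plan is to combine the Bishop-Gromov upper bound with the non-collapsed stability of Theorem \ref{thm:pGH}, handling the possibility of collapse by contradiction via the intrinsic metric nature of the essential dimension.

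First I would set up compactness. Given a sequence $B_i = B_1(x_i) \in \mathbb{B}(K,N)$ converging in Gromov-Hausdorff to $B_\infty = B_1(x_\infty) \in \mathbb{B}(K,N)$, the Bishop-Gromov inequality \eqref{eq:bishgrom} applied to $\haus^N$ yields a uniform upper bound $\haus^N(B_i) \leq v(K,N,1)$: either the ambient is non-collapsed and the bound is direct, or the essential dimension of the ambient is strictly less than $N$ and $\haus^N(B_i) = 0$. After extraction I may assume $\haus^N(B_i) \to L \in [0, v(K,N,1)]$, so that the goal is $L = \haus^N(B_\infty)$.

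In the \emph{non-collapsed case} $L > 0$, for $i$ large the ambients are non-collapsed with $\haus^N(B_1(x_i)) \geq L/2 > 0$; Theorem \ref{thm:pGH} then upgrades a subsequential pGH convergence to the pmGH convergence $(\XX_i, \dist_i, \haus^N, x_i) \to (\XX_\infty, \dist_\infty, \haus^N, x_\infty)$ to a pointed non-collapsed $\RCD(K,N)$ space. Weak convergence of the measures, combined with the fact that $\haus^N(\partial B_r(x_\infty)) = 0$ for all but countably many radii $r$ (which follows from the Bishop-Gromov monotonicity of $r \mapsto \haus^N(B_r(x_\infty))/v(K,N,r)$), lets one pass to the limit on the open unit ball and conclude $L = \haus^N(B_\infty)$. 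In the \emph{collapsed case} $L = 0$, I argue by contradiction: if $\haus^N(B_\infty) > 0$ then the ambient of $B_\infty$ is non-collapsed and, by Remark \ref{rem:essmetric}, its metric admits a Euclidean tangent blow-up of full dimension $N$ near $x_\infty$; a stability argument for iterated tangent cones under pGH convergence transfers this Euclidean structure back to $B_i$ eventually, forcing the $B_i$ to be non-collapsed with $\haus^N(B_i)$ bounded below, contradicting $L = 0$.

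The technical crux is twofold. First, converting weak convergence of measures into the numerical convergence of $\haus^N$ on an open ball requires the boundary negligibility $\haus^N(\partial B_1(x_\infty)) = 0$, which is handled by an infinitesimal radius perturbation together with Bishop-Gromov monotonicity. Second, the propagation-of-Euclidean-dimension argument in the collapsed case is the technical heart of \cite{DPG17} and rests on the stability of iterated tangent cones in the $\RCD(K,N)$ class; this is where most of the work lies, and the splitting theorem enters in an essential way.
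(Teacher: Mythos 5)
The paper does not prove Theorem~\ref{thm:conthN}; it recalls it from \cite[Theorem~1.3]{DPG17}, so there is no internal proof to compare against. Judged on its own merits, your sketch has two genuine gaps.

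First, in the ``non-collapsed case'' you write that for $i$ large ``the ambients are non-collapsed'', so that Theorem~\ref{thm:pGH} applies. But Theorem~\ref{thm:pGH} requires $(\XX_i,\dist_i,\haus^N)$ to be $\ncRCD(K,N)$ spaces, i.e.\ requires the \emph{Hausdorff} measure itself to be an $\RCD(K,N)$ measure. From $\haus^N(B_1(x_i))\ge L/2>0$ you only get that each $\XX_i$ has essential dimension $N$ and is therefore \emph{weakly} non-collapsed (Theorem~\ref{thm:equivnon}); concluding $\mm_i=c_i\haus^N$ from this is exactly Conjecture~\ref{conj}, i.e.\ Theorem~\ref{thm:mainres}. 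You can make the step legitimate within the logic of the present paper by explicitly invoking Theorem~\ref{thm:mainres} (there is no circularity: Theorem~\ref{thm:conthN} enters only the applications, not the proof of the main theorem). But as written the inference is unjustified, and it is emphatically not how \cite{DPG17} argues, since they did not have the conjecture at their disposal.

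Second, the collapsed case is too vague to verify. ``Stability of iterated tangent cones'' together with Remark~\ref{rem:essmetric} tells you that if $\XX_\infty$ has essential dimension $N$ then Euclidean blow-ups appear along the approximating sequence; it does not, by itself, produce a \emph{lower bound} on $\haus^N(B_i)$. The missing ingredient is the Reifenberg-type $\eps$-regularity estimate from \cite{DPG17}: if a ball $B_r(p)$ in an $\RCD(K,N)$ space is sufficiently GH-close to $B_r(0)\subset\R^N$, then $\haus^N(B_r(p))\ge (1-\delta)\omega_N r^N$. That quantitative volume lower bound, applied to a ball around an almost-regular point transferred from $B_\infty$ to $B_i$, is what forces $\haus^N(B_i)$ to stay away from zero. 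Without naming it, the contradiction argument does not close. In fact \cite{DPG17} does not split into cases at all: the proof there is a single covering argument built on $\eps$-splitting maps and volume almost-rigidity, which simultaneously handles the upper and lower estimates; your proposal takes a different route and would need the above two ingredients to be made precise.

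Two smaller remarks: the ``boundary negligibility'' issue you raise is actually cleaner than you suggest — on any $\RCD(K,N)$ space, Bishop–Gromov already yields $\mm(\partial B_r(x))=0$ for \emph{every} $r$ (the volume ratio forces $\mm(\overline{B_r})=\mm(B_r)$), so once the reference measure is identified with $c\,\haus^N$ no radius perturbation is needed. And the a~priori upper bound $\haus^N(B_i)\le v(K,N,1)$ follows from \eqref{eq:densityest1} together with Bishop–Gromov \emph{once} one knows the ambient is $\ncRCD$; in the collapsed case $\haus^N(B_i)=0$ because the Hausdorff dimension is then strictly below $N$ by Theorem~\ref{thm:equivnon}, so this part is fine as stated.
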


For our main purpose, we need a  notion weaker than the non-collapsed one. In order to give the precise definition, let us recall the following result which is just a combination from previous known ones:
\begin{theorem}\label{thm:equivnon}
Let $(\XX, \dist, \meas)$ be an $\RCD(K, N)$ space. Then the following five conditions are equivalent.
\begin{enumerate}
\item\label{item1weak} The essential dimension of $\X$ is $N$.
\item\label{item2weak} $\meas$ is absolutely continuous with respect to $\haus^N$.
\item\label{item3weak} (\ref{eq:positive}) holds.
\item\label{item4weak} $N\in\N$ and the Hausdorff dimension of $(\XX, \dist)$ is greater than $N-1$.
\item \label{item5weak}The Hausdorff dimension of $(\XX, \dist)$ is $N$.
\end{enumerate}
\end{theorem}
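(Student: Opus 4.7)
The plan is to route all five conditions through the essential dimension $n\in\N\cap[1,N]$ supplied by Theorem \ref{th: RCD decomposition} and to exploit the fine information of Theorem \ref{thm:RN}. Once the identity $\dim_{\haus}\X = n$ (a known fact, discussed below) is in hand, the statement reduces to showing that each of conditions \ref{item2weak}--\ref{item5weak} is equivalent to $n = N$.

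I would first handle \ref{item1weak}$\Leftrightarrow$\ref{item2weak}$\Leftrightarrow$\ref{item3weak}, which follow directly from Theorem \ref{thm:RN}. Indeed, if $n = N$ then $\meas \ll \haus^N\mres\mathcal{R}_N\subseteq\haus^N$, giving \ref{item2weak}, while the density $\theta_N(x)=\lim_{r\to 0^+}\meas(B_r(x))/(\omega_Nr^N)$ is positive and finite for $\meas$-a.e.\ $x\in\mathcal{R}_N^*$, giving \ref{item3weak}. Conversely, if $n < N$, Theorem \ref{thm:RN} says $\haus^n\mres\mathcal{R}_n^*$ is $\sigma$-finite (mutually absolutely continuous with $\meas\mres\mathcal{R}_n^*$), and the elementary fact that $\haus^s$ vanishes on any set of $\sigma$-finite $\haus^n$ measure for $s>n$ yields $\haus^N(\mathcal{R}_n^*)=0$; combined with $\meas(\X\setminus\mathcal{R}_n^*)=0$ this forbids $\meas\ll\haus^N$, ruling out \ref{item2weak}. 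The finiteness of $\theta_n$ on $\mathcal{R}_n^*$ further forces $\theta_N\equiv+\infty$ $\meas$-a.e.\ when $n<N$, ruling out \ref{item3weak}.

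For the statements involving Hausdorff dimension I would invoke the identity $\dim_{\haus}\X = n$ established for $\RCD(K,N)$ spaces in the literature (for instance \cite{Kita17}, see also \cite{BruPasSem20}); the upper bound $\dim_{\haus}\X\leq N$ is the standard consequence of Bishop--Gromov recalled after \eqref{eq:bishgrom}. Granting this, \ref{item1weak}$\Rightarrow$\ref{item5weak} is immediate; conversely \ref{item5weak} gives $n=\dim_{\haus}\X=N$, hence $N\in\N$, so \ref{item1weak} (and therefore \ref{item4weak}) follows. The remaining implication \ref{item4weak}$\Rightarrow$\ref{item1weak} uses the integrality of $n$: from $N\in\N$ and $n = \dim_{\haus}\X > N-1$ together with $n\leq N$ one concludes $n = N$.

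The only input beyond the excerpt is the identification $\dim_{\haus}\X = n$, and this is the main conceptual obstacle: it asserts that the potentially higher-dimensional singular strata do not raise the Hausdorff dimension above the essential dimension. Once this input is accepted, everything else reduces to Theorem \ref{thm:RN}, the Bishop--Gromov inequality, and elementary properties of Hausdorff measures.
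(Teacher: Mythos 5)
Your treatment of the first three conditions is correct and runs essentially the same route as the paper: extract from Theorem \ref{thm:RN} that on $\mathcal{R}_n^*$ the measures $\mass$ and $\haus^n$ are mutually absolutely continuous with locally finite positive density, then observe that $n=N$ gives both \ref{item2weak} and \ref{item3weak}, while $n<N$ kills both (via $\haus^N(\mathcal{R}_n^*)=0$ for \ref{item2weak}, and via $\theta_N\equiv+\infty$ $\mass$-a.e.\ for \ref{item3weak}). The paper cites \cite[Theorem 1.12]{DPG17} for \ref{item1weak}$\Leftrightarrow$\ref{item2weak}, but your direct derivation from Theorem \ref{thm:RN} is sound.

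The gap is exactly where you flagged it: you invoke the general identity $\dim_{\haus}\X=n$ for arbitrary $\RCD(K,N)$ spaces of essential dimension $n$, and attribute it to \cite{Kita17} and \cite{BruPasSem20}. Neither reference contains that statement: \cite{Kita17} is the source for the lower semicontinuity of essential dimension under pmGH convergence (Theorem \ref{thm:loweress}), and \cite{BruPasSem20} is cited in the paper only for the metric characterization of the essential dimension (Remark \ref{rem:essmetric}). The identity $\dim_{\haus}\X=n$ amounts to the assertion that the singular set $\X\setminus\mathcal{R}_n$ (which is merely $\mass$-null) has Hausdorff dimension at most $n$ even in the collapsed case; this is a nontrivial dimension estimate on the singular strata and is not among the tools available here. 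As evidence that it is not a routinely available fact, note that the paper itself goes out of its way to avoid it: for \ref{item4weak}$\Rightarrow$\ref{item1weak} it invokes the proof of \cite[Theorem 1.4]{DPG17} to produce, from $\dim_{\haus}\X>N-1$ and $N\in\setN$, an \emph{iterated} tangent cone isomorphic to $\setR^N$, and then applies Theorem \ref{thm:loweress} to deduce $\mathrm{essdim}(\X)\ge N$, hence $=N$; for \ref{item5weak}$\Rightarrow$\ref{item4weak} and \ref{item2weak}$\Rightarrow$\ref{item5weak} it uses only the upper bound $\dim_{\haus}\X\le\lfloor N\rfloor$ from \cite[Corollary 1.5]{DPG17} together with the elementary lower bound $\dim_{\haus}\X\ge N$ whenever $\mass\ll\haus^N$. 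This proves $n=N\Leftrightarrow\dim_{\haus}\X=N$ without ever establishing $\dim_{\haus}\X=n$ in the sub-maximal regime. Your route would indeed be cleaner if that identity were in hand, but as written you are leaning on a claim that your citations do not support and that the paper deliberately sidesteps; you would need to supply a precise reference or a proof for it.
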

\begin{proof}
The equivalence between item \ref{item1weak} and item \ref{item2weak} is proved in \cite[Theorem 1.12]{DPG17}. Since the implication from item \ref{item2weak} to item \ref{item3weak} is a direct consequence of Theorem \ref{thm:RN}, let us check the implication from item \ref{item3weak} to \ref{item1weak} as follows.
The positivity (\ref{eq:positive}) with Theorems \ref{th: RCD decomposition} and  \ref{thm:RN} yields 
\[
\mathcal{H}^N(\mathcal{R}_n^*)>0,
\]
where $n$ denote the essential dimension. In particular $N \le n$. Since the converse inequality is always satisfied by Theorem \ref{thm:RN}, we have item \ref{item1weak}.

Notice that item \ref{item2weak} implies item \ref{item4weak}, we show now that item \ref{item4weak} implies  item \ref{item1weak}. To see this, notice that the proof of \cite[Theorem 1.4]{DPG17} shows that if  item \ref{item4weak} holds, then there is an iterated tangent space isomorphic to $\R^N$. Since the essential dimension of the $N$-dimensional Euclidean space is $N$, the conclusion follows from Theorem \ref{thm:loweress}. 

If we assume item \ref{item5weak}, then, since the Hausdorff dimension of $(\XX,\dist)$ is at most the integer part of $N$  (by \cite[Corollary 1.5]{DPG17}), we see that $N$ is an integer so that item \ref{item4weak} holds. Finally, if item \ref{item2weak} holds, then the Hausdorff dimension of $(\XX,\dist)$ is at least $N$, so that we conclude by \cite[Corollary 1.5]{DPG17} again.
\end{proof}
We are now in a position to introduce the notion of weakly non-collapsed $\RCD(K, N)$ spaces (our definition is trivially equivalent to the one in \cite{DPG17}):
\begin{definition}[Weakly non-collapsed $\RCD(K, N)$ space]\label{def:weak}
An $\RCD(K, N)$ space is said to be \textit{weakly non-collapsed} if one (and thus any) of the items in Theorem \ref{thm:equivnon} is satisfied.
\end{definition}
Note that any non-collapsed $\RCD(K,N)$ space is a weakly non-collapsed $\RCD(K,N)$ space.

We conclude the section recalling - see e.g.\ the introduction - that one expects the notion of non-collapsed space to be related to the fact that the trace of the Hessian is the Laplacian. A first instance of this behaviour is contained in the following result, that is basically extracted from \cite[Proposition 3.2]{Han14} (notice that Definition \ref{def:rcdkn} tells that if the stated inequality \eqref{eq:locbo} holds without restrictions on the support of $\phi$, then the space is an $\RCD(K,n)$ space and thus, since $n$ is assumed to be the essential dimension, the space is weakly non-collapsed).
\begin{theorem}\label{thm:generalizedhan} Let $(\X,\sfd,\mm)$ be an $\RCD(K,N)$ space of essential dimension $n$ and let $U\subset\X$ be open. Then the following two conditions are equivalent:

\begin{enumerate}
\item\label{item1han}  For any $f \in \mathrm{Test}F(\XX, \dist, \meas)$ and any $\phi \in D (\Delta)$ non-negative with $\supp(\phi)\subset  U$ and  $\Delta \phi \in L^{\infty}(\XX, \meas)$  we have
\begin{equation}\label{eq:locbo}
\frac{1}{2}\int_U\Delta \phi \, |\nabla f|^2\di \meas \ge \int_U\phi \left(\frac{(\Delta f)^2}{n}+\langle \nabla \Delta f, \nabla f\rangle +K|\nabla f|^2\right) \di \meas.
\end{equation}
\item\label{item2han} For any $f \in D(\Delta)$ we have
\begin{equation}
\label{eq:deltatrH}
\Delta f=\mathrm{tr}(\mathrm{Hess} f)\qquad \text{ $\meas$-a.e.\ in $U$.}
\end{equation}
\end{enumerate}
\end{theorem}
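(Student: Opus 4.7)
We treat the two directions separately; the direction \ref{item2han} $\Rightarrow$ \ref{item1han} is immediate, and the converse is the substantive one. For the easy direction, fix $f \in \mathrm{Test}F(\XX, \dist, \meas)$ and $\phi$ as in \ref{item1han}. By Proposition \ref{prop:riemannianmetric}, $|g|_{\sf HS} = \sqrt{n}$, so the pointwise Cauchy--Schwarz inequality applied to $\tra(\hess f) = \langle \hess f, g\rangle_{\sf HS}$ yields $(\tra(\hess f))^2 \leq n |\hess f|_{\sf HS}^2$ $\meas$-a.e. Under hypothesis \ref{item2han}, this becomes $(\Delta f)^2/n \leq |\hess f|_{\sf HS}^2$ $\meas$-a.e.\ on $U$. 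Inserting this lower bound into the Bochner--Hessian inequality \eqref{eq:bochnerhessi}, and using $\supp(\phi) \subset U$ together with the locality of $\Delta$ to restrict all integrals to $U$, yields \eqref{eq:locbo}.

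For the converse \ref{item1han} $\Rightarrow$ \ref{item2han}, which is the content of the result quoted from \cite[Prop.\ 3.2]{Han14}, the strategy is to combine \eqref{eq:locbo} with \eqref{eq:bochnerhessi}. Setting
\[
A_f \defeq \tfrac{1}{2}\int \Delta \phi \cdot |\nabla f|^2 \di \meas - \int \phi\bigl(\langle \nabla \Delta f, \nabla f\rangle + K|\nabla f|^2\bigr)\di \meas,
\]
Theorem \ref{thm:bochnerhessian} gives $A_f \geq \int \phi |\hess f|_{\sf HS}^2 \di \meas$, while hypothesis \ref{item1han} gives $A_f \geq \int \phi (\Delta f)^2/n \di\meas$. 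Using the $g$-orthogonal decomposition $\hess f = \tfrac{\tra(\hess f)}{n} g + T_0$ with $\langle T_0, g\rangle_{\sf HS} = 0$, which yields $|\hess f|_{\sf HS}^2 = \tfrac{(\tra(\hess f))^2}{n} + |T_0|^2$, and polarizing both inequalities under $f \mapsto f + sh$ for perturbations $h \in \mathrm{Test}F(\XX, \dist, \meas)$, one obtains non-negative quadratic polynomials in $s$ whose discriminant constraints, properly combined and specialized in $h$ to isolate the defect $D_f \defeq \tra(\hess f) - \Delta f$, force $\int_U \phi\, D_f^2 \di \meas = 0$, giving \ref{item2han} first for $f \in \mathrm{Test}F(\XX, \dist, \meas)$ and then for $f \in D(\Delta)$ by density of $\mathrm{Test}F(\XX, \dist, \meas)$ in $(D(\Delta), \|\cdot\|_D)$ together with the $L^2$-continuity of $\hess$ recorded in \eqref{eq:bochner2}.

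The main obstacle lies in the polarization step above: since \eqref{eq:bochnerhessi} and \eqref{eq:locbo} are inequalities rather than identities, extracting the pointwise vanishing of $D_f$ is not automatic and requires exploiting the full discriminant information of the quadratic-in-$s$ polynomials. A potentially cleaner route is to self-improve \eqref{eq:locbo} to a refined Hessian estimate carrying the additional term $\int \phi\, D_f^2/(N-n)\, \di \meas$, mimicking the Bakry--\'Emery $N$-tensor structure in the smooth weighted setting; combined with \eqref{eq:bochnerhessi} this would directly yield $D_f = 0$ $\meas$-a.e.\ on $U$.
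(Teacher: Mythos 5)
The direction \ref{item2han} $\Rightarrow$ \ref{item1han} is correct and matches the paper exactly: Cauchy--Schwarz with $|g|_{\sf HS}=\sqrt n$ gives $(\tra(\hess f))^2\le n|\hess f|^2_{\sf HS}$, then insert into \eqref{eq:bochnerhessi}. The closing density step (pass from $\mathrm{Test}F$ to $D(\Delta)$ via density and the a priori bound \eqref{eq:bochner2}) also matches.

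The converse \ref{item1han} $\Rightarrow$ \ref{item2han} is where the proposal has a genuine gap. Your first route (polarization in $f\mapsto f+sh$) cannot be salvaged as written: the two estimates you start from, $A_f\ge\int\phi|\hess f|^2_{\sf HS}\di\mm$ and $A_f\ge\int\phi(\Delta f)^2/n\,\di\mm$, are two \emph{independent} lower bounds on the same quantity $A_f$. If $A_f$ exceeds both by a large slack, the discriminant inequalities obtained by polarizing each bound separately put no constraint whatsoever on the defect $D_f=\tra(\hess f)-\Delta f$, and there is no $h$-specialization that ``isolates'' $D_f$ from the two quadratic forms: you would be trying to extract an equality from two one-sided inequalities with uncontrolled gap. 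Some additional structural input is mandatory.

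Your second route is the correct one and is, in fact, what the paper's cited reference \cite[Prop.~3.2]{Han14} uses --- but as stated it does not close either. The ``refined Hessian estimate'' you want is the self-improved Bochner inequality with defect term
\[
A_f\ \ge\ \int_U\phi\Bigl(|\hess f|_{\sf HS}^2+\frac{(\tra(\hess f)-\Delta f)^2}{N'-n}\Bigr)\di\mm,
\]
and two things are missing. First, ``combined with \eqref{eq:bochnerhessi}'' achieves nothing: \eqref{eq:bochnerhessi} is $A_f\ge\int\phi|\hess f|^2_{\sf HS}$, which is strictly \emph{weaker} than the refined estimate, so juxtaposing them cannot force $D_f=0$. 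Second, the denominator must be a \emph{free} dimension parameter $N'>n$, not the fixed $N$ of the ambient $\RCD(K,N)$ hypothesis: the point is that \eqref{eq:locbo} is precisely the local $\BE(K,n)$ condition on $U$, hence local $\BE(K,N')$ holds on $U$ for every $N'\ge n$; applying the (localized) self-improvement of Han for each $N'>n$ gives the defect term with coefficient $1/(N'-n)$, and since the left-hand side is independent of $N'$, letting $N'\downarrow n$ forces $\int_U\phi\,D_f^2\,\di\mm=0$. Without this one-sided limit $N'\to n^+$, the argument does not conclude even when $N>n$, and is undefined when $N=n$. You would also need to localize the self-improvement lemma itself on $U$, which the paper does via cut-off test functions supported in $U$; you have not addressed this localization.
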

\begin{proof}
It is easy to see the implication from item \ref{item2han} to item \ref{item1han} is trivial because we know
\[
|\mathrm{tr}(\mathrm{Hess} f)|=|\langle \mathrm{Hess}f, g\rangle_{\sf HS}| \le |\mathrm{Hess}f|_{\sf HS}\,|g|_{\sf HS} = |\mathrm{Hess}f|_{\sf HS} \cdot \sqrt{n}.
\]
Thus item \ref{item2han}  gives  $|\mathrm{Hess}f|_{\sf HS}^2 \ge (\Delta f)^2/n$, and therefore item \ref{item1han} follows directly from (\ref{eq:bochnerhessi}).

For the reverse implication we  closely follow  the proof of \cite[Proposition 3.2]{Han14} keeping  in mind  \eqref{eq:locbo} and the existence, for any $A\subset U$ with $A$ compact and $U$ open, of a test function identically 1 on $A$ and with support in $U$ (see e.g.\ \cite{AmbrosioMondinoSavare13-2} or \cite[Lemma 6.2.15]{GP19}). In this way we easily obtain  that \eqref{eq:deltatrH} holds for any $f\in \mathrm{TestF}(\XX,\dist,\meas)$.  Then by the density of $\mathrm{Test}F(\XX,\dist,\meas)$ in $D(\Delta)$ (see for example \cite[Lemma 2.2]{H19})  \eqref{eq:deltatrH} holds for $f\in D(\Delta)$. 
\end{proof}

\section{Smoothing of the Riemannian metric by the heat kernel}\label{sec:3}

\subsection{Local Hille's theorem}\label{subse:local}

In this section we collect some basic results about local (differentiation) operators: the main result we have in mind is the version of Hille's theorem stated in Lemma \ref{le:hille} below. We shall apply the notions presented here to the operators $\d,\Delta,\nabla^*$, but in order to highlight the similarities among the various approaches we shall give a rather abstract presentation.

Thus let us fix a metric measure space $(\X,\sfd,\mm)$ and two $L^0$-normed modules $\mathscr M,\mathscr N$. For $p\in[1,\infty]$ we shall denote by $L^p(\mathscr M)$ (resp.\ $L^p_{loc}(\mathscr M)$) the collection of those $v\in\mathscr M$ with $|v|\in L^p(\X,\mm)$ (resp.\ $|v|\in L^p_{loc}(\X,\mm)$). Similarly for $\mathscr N$.

\begin{definition}[Weakly local operators]
Let $p\in[1,\infty]$ and $L:D(L)\subset L^p(\mathscr M)\to L^p(\mathscr N)$ be a linear operator. We say that $L$ is \textit{weakly local} provided 
\[
L(v)=L(w)\qquad\mm\text{-a.e.\ on the essential interior of $\{v=w\}$ for any }v,w\in D(L).
\]
\end{definition}
In other words, $L$ is weakly local provided for any $v,w\in D(L)$ and $U\subset\X$ open such that $v=w$ $\mm$-a.e.\ on $U$, we have $L(v)=L(w)$ $\mm$-a.e.\ on $U$.

Weakly local operators can naturally be extended as follows (variants of this definition are possible, but for us the following is sufficient):
\begin{definition}[Extension of weakly local operators]
Let $p\in[1,\infty]$ and $L:D(L)\subset L^p(\mathscr M)\to L^p(\mathscr N)$ be a weakly local operator. We then define $D_{loc}(L)\subset L^p_{loc}(\mathscr M)$ as the collection of those $v$'s such that for every $U\subset\X$ bounded and open there is $v_U\in D(L)\subset L^p(\mathscr M)$   with $v_U=v$ $\mm$-a.e.\ on $U$.

For $v\in D_{loc}(L)$ we define $L(v)\in L^p_{loc}(\mathscr N)$ via
\[
L(v)=L(v_U)\qquad\mm\text{-a.e.\ on}\ U,\forall U\subset\X\text{ open and bounded},
\]
where $v_U$ is as above.
\end{definition}
It is clear from  the definition that $L:D_{loc}(L)\subset L^p_{loc}(\mathscr M)\to L^p_{loc}(\mathscr N)$ is well-posed and that the resulting operator is linear. We are interested in a version of Hille's theorem for this kind of operators and to this aim we need first to introduce the notion of integrable function with values in $L^p(\mathscr M)$.

For the standard notion of Bochner integration of Banach valued maps we refer to \cite{DiestelUhl77}. Given a metric measure space $(\Y,\sfd_\Y,\mu)$ (the topology here is not really relevant, but in our applications we shall mostly have $\Y=\X$) we shall denote by $L^1(\Y,\mu;L^p_{loc}(\mathscr M))$ the collection of (equivalence classes up to $\mu$-a.e.\ equality of) maps $y\mapsto v_y\in L^p_{loc}(\mathscr M)$ such that for any $A\subset \X$ Borel and bounded the map $y\mapsto \chi_A v_y$ is in $L^1(\Y,\mu;L^p(\mathscr M))$ (here we are endowing $L^p(\mathscr M)$ with its natural Banach structure).

With these definitions, the following result is rather trivial (but nevertheless useful):

\begin{lemma}[Local Hille's theorem - abstract version]\label{le:hille}
Let $(\X,\sfd,\mm)$, $(\Y,\sfd_\Y,\mu)$ be  metric measure spaces,  $\mathscr M,\mathscr N$ two $L^0$-normed modules, $p\in[1,\infty]$ and $L:D(L)\subset L^p(\mathscr M)\to L^p(\mathscr N)$ a weakly local and closed linear operator. Also, let $y\mapsto v_y\in L^p_{loc}(\mathscr M)$ be in $L^1(\Y,\mu;L^p_{loc}(\mathscr M))$. Assume that
\begin{itemize}
\item[i)] $v_y\in D_{loc}(L)$ for $\mu$-a.e.\ $y$,
\item[ii)] there exists a `cut-off' operator $T$:  for every $V\subset U\subset\X$ bounded and open with $\sfd(V,\X\setminus U)>0$ there is a linear  map $T:L^p_{loc}(\mathscr M)\to L^p(\mathscr M)$ such that:
\begin{equation}
\label{eq:cutoffT}
\begin{split}
T(v)&=v\qquad \mm\text{-a.e.\ on\ }V,\\
 T(v)&=T(\chi_Uv)\qquad\mm\text{-a.e.}\\
 \|T(v)\|_{L^p(\mathscr M)}&\leq C\|\chi_U v\|_{L^p(\mathscr M)}
 \end{split}
\end{equation}
for every $v\in L^p_{loc}(\mathscr M)$ and some $C>0$ independent on $v$,
\item[iii)] $L$ has the following `stability under cut-off by $T$' property:  for any $V,U$ as above and $T$ given by item $(ii)$  we have $T(v_y)\in D(L)$ for $\mu$-a.e.\ $y$ and the map  $y\mapsto L(T(v_y))$ is in $L^1(\Y,\mu;L^p(\mathscr N))$.
\end{itemize}
Then $\int_{\YY} v_y\,\d\mm(y)\in D_{loc}(L)$, the map $y\mapsto L(v_y)$ is in $L^1(\Y,\mu;L^p_{loc}(\mathscr M))$ and
\[
L\Big(\int_\YY v_y\,\d\mm(y)\Big)=\int_\YY L(v_y)\,\d\mm(y).
\]
\end{lemma}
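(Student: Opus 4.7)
The plan is to reduce the statement to the classical Banach-valued Hille theorem (see e.g.\ \cite{DiestelUhl77}) by using the cut-off maps $T$ supplied by hypothesis $(ii)$ to pass from $L^p_{loc}$ to $L^p$. Fix a bounded open set $V\subset \X$; to prove $\int v_y\,\di\mu(y)\in D_{loc}(L)$ it suffices to exhibit, for this $V$, an element of $D(L)$ coinciding $\mm$-a.e.\ on $V$ with $\int v_y\,\di\mu(y)$. Choose a bounded open $U\supset V$ with $\sfd(V,\X\setminus U)>0$ and let $T:L^p_{loc}(\mathscr M)\to L^p(\mathscr M)$ be the operator provided by hypothesis $(ii)$. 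The candidate is then $w_V:=\int_\YY T(v_y)\,\di\mu(y)\in L^p(\mathscr M)$.

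To see that $w_V\in D(L)$ I would verify the assumptions of classical Hille for the map $y\mapsto T(v_y)$. Bochner integrability in $L^p(\mathscr M)$ follows from the third bound in \eqref{eq:cutoffT}, which gives $\|T(v_y)\|_{L^p(\mathscr M)}\le C\|\chi_U v_y\|_{L^p(\mathscr M)}$, together with the hypothesis $v\in L^1(\Y,\mu;L^p_{loc}(\mathscr M))$ (and standard strong measurability arguments, using linearity of $T$). Hypothesis $(iii)$ yields $T(v_y)\in D(L)$ for $\mu$-a.e.\ $y$. The key point, and the only place where some care is needed, is the Bochner integrability of $y\mapsto L(T(v_y))$ in $L^p(\mathscr N)$: hypothesis $(iii)$ only provides it in $L^p_{loc}(\mathscr N)$. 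Here I would use weak locality of $L$ applied to $T(v_y)\in D(L)$: by the second identity of \eqref{eq:cutoffT} we have $T(v_y)=0$ $\mm$-a.e.\ on the (open) set $\X\setminus \overline U$, hence weak locality forces $L(T(v_y))=L(0)=0$ $\mm$-a.e.\ there. Thus $L(T(v_y))$ is supported in $\overline U$, so $\|L(T(v_y))\|_{L^p(\mathscr N)}=\|\chi_{U'} L(T(v_y))\|_{L^p(\mathscr N)}$ for any bounded open $U'\supset\overline U$, and integrability in $L^p_{loc}$ promotes to integrability in $L^p$. Classical Hille then yields $w_V\in D(L)$ with $L(w_V)=\int_\YY L(T(v_y))\,\di\mu(y)$.

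It remains to identify everything on $V$. From the first line of \eqref{eq:cutoffT}, $T(v_y)=v_y$ $\mm$-a.e.\ on $V$, so integrating in $y$ gives $w_V=\int_\YY v_y\,\di\mu(y)$ $\mm$-a.e.\ on $V$; this shows $\int v_y\,\di\mu(y)\in D_{loc}(L)$ and, by the definition of the localized operator, $L\bigl(\int v_y\,\di\mu(y)\bigr)=L(w_V)$ $\mm$-a.e.\ on $V$. On the other hand, since $T(v_y)\in D(L)$ coincides with $v_y$ $\mm$-a.e.\ on $V$ and $v_y\in D_{loc}(L)$, by definition of $L$ on $D_{loc}(L)$ we have $L(v_y)=L(T(v_y))$ $\mm$-a.e.\ on $V$ for $\mu$-a.e.\ $y$. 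In particular $y\mapsto\chi_V L(v_y)=\chi_V L(T(v_y))$ lies in $L^1(\Y,\mu;L^p(\mathscr N))$; since $V$ was arbitrary this gives $y\mapsto L(v_y)\in L^1(\Y,\mu;L^p_{loc}(\mathscr N))$. Integrating the identity $L(v_y)=L(T(v_y))$ on $V$ we obtain
\[
L\Bigl(\int_\YY v_y\,\di\mu(y)\Bigr)=\int_\YY L(T(v_y))\,\di\mu(y)=\int_\YY L(v_y)\,\di\mu(y)\qquad\mm\text{-a.e.\ on }V,
\]
and since $V\subset\X$ is an arbitrary bounded open set the claimed equality holds $\mm$-a.e.\ on $\X$. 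The main (in fact the only) subtle step is the one highlighted in the second paragraph: using weak locality to turn the $L^p_{loc}$ information on $L(T(v_y))$ into genuine $L^p$ integrability, which is what makes classical Hille applicable.
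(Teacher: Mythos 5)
Your proof is, in structure, the same as the paper's: pick a bounded open $V$, pick $U\supset V$ and the cut-off map $T$ from hypothesis~$(ii)$, apply classical Hille to $y\mapsto T(v_y)$, identify everything on $V$ using the first line of \eqref{eq:cutoffT} and the weak locality of $L$, then let $V$ vary. You rightly flag the one delicate point: hypothesis~$(iii)$ literally gives $y\mapsto L(T(v_y))\in L^1(\Y,\mu;L^p_{loc}(\mathscr N))$, while classical Hille in the Banach space $L^p(\mathscr N)$ requires membership in $L^1(\Y,\mu;L^p(\mathscr N))$.

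But your fix for this does not hold. From $T(v)=T(\chi_U v)$ $\mm$-a.e.\ you cannot deduce that $T(v_y)=0$ $\mm$-a.e.\ on $\X\setminus\overline U$: that identity only says that $T(v)$ depends on $v$ exclusively through $\chi_U v$, and says nothing about where the output $T(\chi_U v)$ is supported. Nothing in \eqref{eq:cutoffT} forces $T$ to take values in sections vanishing off $\overline U$. Consequently the chain ``$T(v_y)=0$ outside $\overline U$, hence by weak locality $L(T(v_y))=0$ outside $\overline U$, hence $L^p_{loc}$-integrability promotes to $L^p$-integrability'' starts from a premise you have not established. In the concrete Proposition~\ref{prop:lochille}, $T$ is multiplication by a Lipschitz cut-off $\phi$ with $\supp\phi\subset U$, so there $T(v)$ does vanish off $\overline U$; but that is an extra feature of that particular $T$, not a consequence of the abstract~$(ii)$. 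The paper's own proof invokes ``assumption~$(iii)$ and the classical theorem of Hille'' without addressing this, and what is actually verified in the application is $L^1(\Y,\mu;L^p(\mathscr N))$-integrability. The clean repair is to read $(iii)$ with $L^p(\mathscr N)$ in place of $L^p_{loc}(\mathscr N)$, or alternatively to add to $(ii)$ that $T(v)$ vanishes $\mm$-a.e.\ outside $\overline U$, in which case your weak-locality argument becomes correct. As written, the assertion that $T(v_y)$ vanishes off $\overline U$ is the one genuine gap in your proof.
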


\begin{proof} Fix $V\subset \X$ open bounded and then let $U\supseteq V$ open bounded be with $\sfd(V,\X\setminus U)>0$.  Let $T:L^p_{loc}(\mathscr M)\to L^p(\mathscr M)$ be given by item $(ii)$. By the assumption $(i)$ we know that $y\mapsto \chi_U v_y\in L^p(\mathscr M)$ is in $L^1(\X,\mm;L^p(\mathscr M))$ and the  third in \eqref{eq:cutoffT} gives that $T$ is continuous as map from $L^p(\mathscr M)$ to itself. It follows that $y\mapsto T(\chi_Uv_y)=T(v_y)$ is in $L^1(\Y,\mu;L^p(\mathscr M))$. Then assumption $(iii)$ and the classical theorem by Hille ensure that 
\begin{equation}
\label{eq:hilleclas}
\int_{\YY} T(v_y)\,\d\mu(y)\in D(L)\qquad\text{ and }\qquad L\Big(\int_\YY T(v_y)\,\d\mu(y)\Big)=\int_\YY L(T(v_y))\,\d\mu(y).
\end{equation}
Now notice that the first in \eqref{eq:cutoffT} give that $T(v_y)=v_y$ on $V$ for every $y$, thus the weak locality of $L$ also gives that $L(T(v_y))=L(v_y)$ on $V$ for every $y$. It follows that $y\mapsto \chi_VL(v_y)$ is in $L^1(\Y,\mu;L^p(\mathscr N))$ and that
\[
\int_\YY \chi_V v_y\,\d\mu(y)=\int_\YY\chi_V  T(v_y)\,\d \mu(y)\qquad\text{and}\qquad\int_\YY \chi_V L(v_y)\,\d\mu(y)=\int_\YY \chi_V  L(T(v_y))\,\d \mu(y).
\]
Thus using again the weak locality of $L$ it follows that
\[
\begin{split}
\chi_VL\Big(\int_\YY v_y\,\d \mu(y)\Big)&=\chi_VL\Big(\int_\YY T(v_y)\,\d \mu(y)\Big)\\
\textrm{(by \eqref{eq:hilleclas})}\qquad&=\chi_V\int_\YY L(T(v_y))\,\d\mu(y)=\int_\YY \chi_V L(T(v_y))\,\d\mu(y)=\int_\YY \chi_V L(v_y)\,\d\mu(y).
\end{split}
\]
Since $V$ was arbitrary, this is the conclusion.
\end{proof}

We now see how to apply this general statement to the concrete cases of $L=\d,\Delta,\nabla^*$. The idea is to use, as map $T$, the multiplication with a Lipschitz cut-off function $\phi$ with support in $U$ and identically 1 on $V$. For the case of the Laplacian this does not really work, as one would need to multiply by a Lipschitz function with bounded Laplacian in order to remain in the domain of the operator. The problem is that on general $\RCD(K,\infty)$ spaces it is not clear whether this sort of cut-off functions exist (but see \cite{AmbrosioMondinoSavare13-2} or \cite[Lemma 6.2.15]{GP19} for the case of proper $\RCD$ spaces). This issue is, however, easily dealt with by recalling that the Laplacian is the divergence of the gradient and applying the above theorem twice (this amount at localizing $\Delta f$ by looking at $\div(\phi \nabla(\phi f))$).

\medskip

Let us start recalling that the differential $\d :H^{1,2}(\X,\sfd,\mm)\subset L^2(\X,\mm)\to L^2(T^*(\X,\sfd,\mm))$ is weakly local (in fact even more, as there is locality on Borel sets and not just on open ones) by \cite[Theorem 2.2.3]{Gigli14}. The same holds for the divergence operator $\div:D(\div)\subset L^2(T(\X,\sfd,\mm))\to L^2(\X,\mm)$. Indeed, for $v,w\in D(\div)$ equal on some open set $U$, we have
\[
\int_\XX \phi \div v\,\d\mm=-\int_\XX \d\phi(v)\,\d\mm\stackrel{(*)}=-\int_\XX \d\phi(w)\,\d\mm=\int_\XX \phi \div w\,\d\mm
\]
for any $\phi\in \Lip(\X,\dist)$   with $\supp(\phi)\subset U$, having used the locality of the differential and the assumption $v=w$ on $U$ in the starred equality $(*)$. This is sufficient to prove the claim. Similarly, starting from the locality of the covariant derivative (see \cite[Proposition 3.4.9]{Gigli14}) it follows the weak locality of $\nabla^*$. Finally, the weak locality of the Laplacian follows from that of the differential and of the divergence.

Below for the domain $D_{loc}(\d)\subset L^2_{loc}(\X,\mm)$ we shall use the more standard notation $H^{1,2}_{loc}(\X,\sfd,\mm)$.
We then have the following.

\begin{proposition}[Local Hille's theorem - concrete version]\label{prop:lochille} Let $(\X,\sfd,\mm)$ be an $\RCD(K,\infty)$ space and $(\Y,\sfd_\Y,\mu)$ a metric measure space. Let $(f_y)\in L^1(\Y,\mu;L^2_{loc}(\X,\mm))$ (resp.\ $(f_y)\in L^1(\Y,\mu;L^2_{loc}(\X,\mm))$, resp.\ $(A_y)\in L^1(\Y,\mu;L^2_{loc}(T^{\otimes 2}(\X,\sfd,\mm)))$) be with $f_y\in H^{1,2}_{loc}(\X,\sfd,\mm)$ (resp.\ $f_y\in D_{loc}(\Delta)$, resp.\ $A_y\in D_{loc}(\nabla^*)$) for $\mu$-a.e.\ $y\in\Y$.

Then for every $U\subset\X$ open bounded we have that $y\mapsto \chi_U\d f_y$ (resp.\ $y\mapsto  \chi_U\Delta f_y$, resp.\ $y\mapsto  \chi_U\nabla^*A_y$) is - the equivalence class up to $\mu$-a.e.\ equality of - a strongly Borel function (i.e.\ Borel and essentially separably valued).

Now assume also that for every $U\subset \X$ open bounded we have $\int_\YY  \|\chi_U|\d f_y|\|_{L^2}\,\d\mu(y)<\infty$ (resp.\ $\int_\YY  \|\chi_U\Delta f_y\|_{L^2}\,\d\mu(y)<\infty$, resp.\ $\int_\YY \|\chi_U|\nabla^*A_y|\|_{L^2}\,\d\mu(y)<\infty$).

Then  $\int_\YY f_y\,\d \mu(y)\in H^{1,2}_{loc}(\X,\sfd,\mm)$ (resp.\ $\int_\YY f_y\,\d \mu(y)\in D_{loc}(\Delta)$, resp.\ $\int_\YY A_y\,\d \mu(y)\in D_{loc}(\nabla^*)$) with
\[
\d \int_\YY f_y\,\d \mu(y)=\int_\YY \d f_y\,\d \mu(y)
\] 
(resp.\   $\Delta \int_\YY f_y\,\d \mu(y)=\int_\YY \Delta f_y\,\d \mu(y)$, resp.\ $\nabla^*\int_\YY A_y\,\d \mu(y)=\int_\YY \nabla^*A_y\,\d \mu(y)$).
\end{proposition}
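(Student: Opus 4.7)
The plan is to apply Lemma \ref{le:hille} separately to each of $L = d, \nabla^*, \Delta$, with the case of $\Delta$ handled via the factorization $\Delta = \mathrm{div}\circ\nabla$. The operators $d$, $\mathrm{div}$, $\nabla^*$ and $\Delta$ are weakly local (as recalled in the discussion preceding this proposition) and closed (standard for $d$ since it arises from the Cheeger energy, and for the other three an immediate consequence of their duality definitions against dense classes of test objects). So only conditions (ii) and (iii) of Lemma \ref{le:hille} need to be verified.

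Given bounded open $V \subset U \subset \X$ with $\sfd(V, \X \setminus U) > 0$, fix the explicit Lipschitz cutoff
\[
\phi(x) := \max\!\Big(0,\, 1 - \tfrac{\sfd(x, V)}{\sfd(V,\,\X\setminus U)}\Big) \in \Lip_b(\X,\dist),
\]
so that $\phi \equiv 1$ on $V$, $\supp(\phi) \subset U$ and $|d\phi|$ is bounded. In each of the three settings take $T(\cdot) := \phi \cdot (\cdot)$; the three identities in \eqref{eq:cutoffT} are immediate. For $L = d$ the Leibniz rule yields $\phi f \in H^{1,2}$ with $d(\phi f) = \phi\,df + f\,d\phi$ and the pointwise estimate $|d(\phi f)| \leq C(|\chi_U df| + |\chi_U f|)$; combined with the hypothesis $\int_\YY \|\chi_U |df_y|\|_{L^2}\,d\mu(y) < \infty$ and the standing assumption $(f_y) \in L^1(\Y,\mu;L^2_{loc})$, this verifies (iii). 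For $L = \nabla^*$, substituting $f' := \phi f$ into the defining integration-by-parts identity and using $df' = \phi\,df + f\,d\phi$, after rearrangement one obtains
\[
\nabla^*(\phi A) \;=\; \phi\,\nabla^* A \;-\; A(\nabla\phi,\,\cdot\,),
\]
where $A(\nabla\phi,\,\cdot\,)$ denotes the $1$-form obtained by contracting $A$ against $\nabla\phi$. Thus $\phi A \in D(\nabla^*)$ whenever $A \in D_{loc}(\nabla^*)$, with the pointwise bound $|\nabla^*(\phi A)| \leq C(|\chi_U \nabla^* A| + |\chi_U A|_{\sf HS})$, which together with the integrability assumption verifies (iii).

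For $L = \Delta$ one should \emph{not} apply Lemma \ref{le:hille} directly with $T = \phi\cdot$, since multiplication by a Lipschitz cutoff does not in general preserve $D(\Delta)$ (one would need $\phi$ with bounded Laplacian, whose existence on general $\RCD(K,\infty)$ spaces is unclear). Instead we factor $\Delta = \mathrm{div}\circ\nabla$ and apply Lemma \ref{le:hille} twice: a first application to $d$ (equivalently, via the Riesz isomorphism, to $\nabla$) gives $\nabla\!\int_\YY f_y\,d\mu(y) = \int_\YY \nabla f_y\,d\mu(y)$ in $L^2_{loc}$; a second application to $\mathrm{div}$, with multiplicative cutoff $T(V) := \phi V$ and the Leibniz rule $\mathrm{div}(\phi V) = \phi\,\mathrm{div} V + \langle \nabla\phi, V\rangle$ recalled after Definition \ref{def:adj}, commutes $\mathrm{div}$ with the integral. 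Composing yields the desired commutation for $\Delta$. In each case, the measurability of $y \mapsto \chi_U L(f_y)$ as a strongly Borel map into the relevant $L^2$ space follows from the explicit cutoff formulas for $L(T(f_y))$ and the assumed measurability of $y \mapsto f_y$ into $L^2_{loc}$.

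The main technical point is the Leibniz identity for $\nabla^*$; once that identity is in hand, the remaining verifications are mechanical applications of the pointwise Cauchy--Schwarz inequality together with the abstract lemma. The Laplacian statement requires the composition trick specifically because $\Delta$ does not admit a convenient multiplicative cutoff on general, possibly non-proper, $\RCD$ spaces.
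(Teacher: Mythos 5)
Your overall scheme — apply Lemma \ref{le:hille} to $\d$ and $\nabla^*$ with $T(\cdot)=\phi\cdot(\cdot)$, and handle $\Delta$ via the factorization $\Delta=\div\circ\nabla$ — is exactly the paper's, and the explicit Leibniz identity $\nabla^*(\phi A)=\phi\,\nabla^*A - A(\nabla\phi,\cdot)$ you derive is correct (the paper only says ``the same line of thought''). However, the one-line treatment of the strong Borel measurability is a genuine gap. You write that measurability of $y\mapsto\chi_U L(f_y)$ ``follows from the explicit cutoff formulas for $L(T(f_y))$ and the assumed measurability of $y\mapsto f_y$ into $L^2_{loc}$.'' But look at what those formulas say: $\d(\phi f_y)=\phi\,\d f_y+f_y\,\d\phi$, $\nabla^*(\phi A_y)=\phi\,\nabla^*A_y-A_y(\nabla\phi,\cdot)$. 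In every case $L(f_y)$ itself reappears on the right, so the formulas do not reduce the measurability of $y\mapsto L(f_y)$ to that of $y\mapsto f_y$. Nor can one argue by continuity: $\d$ is unbounded as a partially defined operator on $L^2$, so Borel regularity of $y\mapsto f_y$ with respect to the $L^2$-topology does not automatically transfer across $\d$. Without this measurability you cannot even apply the classical Hille theorem hidden inside Lemma \ref{le:hille}, since item $(iii)$ there requires $y\mapsto L(T(v_y))$ to be Bochner integrable.

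The paper's argument fills this gap by a topological detour: it is enough to show that $y\mapsto f_y\in H^{1,2}$ is Borel for the $H^{1,2}$-topology (then compose with the continuous map $\d:H^{1,2}\to L^2(T^*)$), and this in turn reduces to showing that closed $H^{1,2}$-balls belong to the $\sigma$-algebra generated by $L^2$-open sets — which holds because the $H^{1,2}$-norm is lower semicontinuous under $L^2$-convergence, so closed $H^{1,2}$-balls are $L^2$-closed. For the Laplacian a further argument is needed, since $f_y\mapsto\Delta f_y$ is not continuous from $H^{1,2}$ either: the paper tests against $\psi\xi$ with $\xi$ in a countable dense subset of $L^2$ and uses the weak-$*$ characterization of Borel sets. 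A secondary, smaller omission in your Laplacian step: to apply the differential case one first needs to check that $f\in D_{loc}(\Delta)$ implies $f\in H^{1,2}_{loc}$ and that $\int_\YY\|\chi_U|\d f_y|\|_{L^2}\,\d\mu(y)<\infty$ follows from the assumed $\int_\YY\|\chi_U\Delta f_y\|_{L^2}\,\d\mu(y)<\infty$; the paper does this via the Caccioppoli-type estimate $\tfrac12\int|\phi|^2|\d f|^2\,\d\mm\leq C\int_U(|f|^2+|\Delta f|^2)\,\d\mm$, which your write-up skips.
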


\begin{proof} We start with the case of differential.  We have already noticed that $\d$ is weakly local and we know from \cite[Theorem 2.2.9]{Gigli14} that it is a closed operator. Let us check that the other assumptions in Lemma \ref{le:hille} are satisfied. $(i)$ holds by our assumption, thus we pass to  $(ii)$. Let $U,V$ as in the statement and let $\phi\in\Lip(\X,\dist)$ be identically 1 on $V$ and with support in $U$ (the hypothesis $\sfd(V,\X\setminus U)>0$ grants that such $\phi$ exists). We define $T(f):=\phi f$ and notice that the properties in \eqref{eq:cutoffT} are trivial. We pass to $(iii)$, and notice that by the very definition of extension of $\d$ from $H^{1,2}(\X,\sfd,\mm)$ to $H^{1,2}_{loc}(\X,\sfd,\mm)$ it follows that the Leibniz rule holds even in $H^{1,2}_{loc}(\X,\sfd,\mm)$. It is then clear that we have $\phi f\in H^{1,2}_{loc}(\X,\sfd,\mm)$ with  $\d (\phi f)=\phi \d f+f\d \phi$. Thus 
\begin{equation}
\label{eq:daleibd}
|\d (\phi f)|\leq  \chi_{U}|\d f|\sup|\phi|+\chi_U C |f| \in L^2(\X,\mm),
\end{equation}
where $C$ denotes the Lipschitz constant of $\phi$. 
Therefore $\phi f$ is actually in $H^{1,2}(\XX,\dist,\mass)$.

With this said, let us verify the first claim. Fix $U\subset \X$ open bounded, let $\phi\in\Lip_{bs}(\X,\sfd)$ be identically 1 on $U$ and notice that replacing $f_y$ with $\phi f_y$ it is sufficient to prove that if $y\mapsto f_y\in L^2(\X,\mm)$ is  Borel and $f_y\in H^{1,2}(\X,\sfd,\mm)$ for every $y\in\Y$, then $y\mapsto \d f_y\in L^2(T^*(\X,\sfd,\mm))$ is strongly Borel. Since   $L^2(T^*(\X,\sfd,\mm))$ is separable (see \cite{ACM14} and \cite[Proposition 2.2.5]{Gigli14}), it is enough to check Borel regularity. Also, since $\d:H^{1,2}(\X,\dist,\meas)\to L^2(T^*(\X,\dist,\meas))$ is continuous, it suffices to prove that $y\mapsto  f_y\in H^{1,2}(\X,\sfd,\mm)$ is Borel. To see this it is sufficient to show that the unit ball in $H^{1,2}(\X,\sfd,\mm)$ belongs to the $\sigma$-algebra $\mathcal A$ generated by $L^2$-open sets in $H^{1,2}(\X,\sfd,\mm)$. But this is obvious, because the lower semicontinuity of the $H^{1,2}$-norm w.r.t.\ $L^2$-convergence ensures that closed $H^{1,2}$-balls are also $L^2$-closed, and thus are in $\mathcal A$. Since open balls are countable unions of closed balls, the first claim  follows. 

For the second we now observe that what just proved, our assumption $\int_\YY  \|\chi_U|\d f_y|\|_{L^2}\,\d\mu(y)<\infty$  and \eqref{eq:daleibd} ensure that  $y\mapsto \d(\phi f_y)$ is in $L^1(\Y,\mu;L^2(T^*(\X,\sfd,\mm)))$, i.e.\ $(iii)$ of Lemma \ref{le:hille} holds and the conclusion follows from such lemma.

The same line of thought gives the conclusion for $\nabla^*$. For the Laplacian we start noticing that  for $V,U$ and $\phi$ as above we have
\[
\begin{split}
\int_\XX|\phi|^2|\d f|^2\,\d\mm&=-\int_\XX2 f\phi\langle\d f,\d\phi\rangle+\phi^2 f\Delta f\,\d\mm\\
&\leq \int_\XX\tfrac12 |\phi|^2|\d f|^2+2|\d\phi|^2|f|^2+\tfrac12|\phi|^2(|f|^2+|\Delta f|^2)\,\d\mm,
\end{split}
\]
i.e.\ $\frac12\int_\XX|\phi|^2|\d f|^2\,\d\mm\leq C\int_U |f|^2+|\Delta f|^2\,\d\mm $. This proves that if $f\in D_{loc}(\Delta)\subset L^2_{loc}(\X,\mm)$ then $f\in H^{1,2}_{loc}(\XX,\dist,\mass)$ as well. Hence what previously proved tells that for  $y\mapsto f_y\in L^2_{loc}$ Borel with $f_y\in D_{loc}(\Delta)$ for $\mu$-a.e.\ $y$, we have that $y\mapsto \chi_U\d f_y\in L^2(T^*(\X,\sfd,\mm))$ is strongly Borel  for any $U\subset\X$ open bounded. Now we want to prove that the same assumptions ensure that $y\mapsto \chi_U\Delta f_y\in L^2(\X,\mm)$ is Borel as well. Since the $\sigma$-algebra generated by the strong topology coincides with that generated by the weak topology (because the closed unit ball can be realized as countable intersection of weakly-closed halfspaces, so that closed balls are weakly Borel and thus the same holds for open balls since they are countable union of closed balls), by approximation to get the desired Borel regularity it is sufficient to prove that $y\mapsto \int_\XX \psi \xi\Delta f_y \,\d\mm$ is Borel for any $\psi\in\Lip_{bs}(\X,\sfd)$ and $\xi$ varying in a countable dense subset of $L^2(\X,\mm)$. We pick $\xi\in H^{1,2}(\X,\sfd,\mm)$ and notice that
\[
\int_\XX \psi \xi\Delta f_y \,\d\mm=-\int_\XX \langle \nabla(\psi \xi),\nabla f_y\rangle \,\d\mm=-\int_U \langle \nabla(\psi \xi),\nabla f_y\rangle \,\d\mm
\]
for any $U\subset \X$ open bounded and containing the support of $\psi$. 
By what we already proved we see that the RHS is a Borel function of $y$, hence the desired Borel regularity follows.

With this said, the conclusion for the Laplacian follows by first applying the result to the differential and then to the divergence (the study of the divergence operator closely follows that of $\nabla^*$ that in turn, as said, is largely based on that of $\d$).
\end{proof}
\begin{remark}
The above version of Hille's theorem is compatible with  the more general one recently discussed in \cite[Section 3.3]{CGP21}. 
However, as the presentation here is substantially simpler we preferred giving a direct proof, rather than linking the terminology to that in \cite{CGP21}.
\fr\end{remark}

\subsection{Gaussian estimates and their consequences}
Thanks to (\ref{eq:poincare}) and (\ref{eq:bishgrom}), it follows from \cite[Proposition 3.1]{Sturm96III} that there exists a unique (locally H\"older) continuous function $p: \XX \times \XX \times (0, \infty) \to (0, \infty)$, called the \textit{heat kernel} of $(\XX, \dist, \meas)$, such that the following holds;
\begin{equation}\label{eq:heatflowheatkernel}
\heat_tf(x)=\int_{\XX}p(x, y, t)f(y)\di \meas(y)\qquad \forall f \in L^2(\XX, \meas),\forall x \in \XX.
\end{equation}
Let us denote by $p_{y, t}(x)=p(x, y, t)$ when we consider $p$ as a function on $\XX$ for fixed $y \in \XX$ and $t>0$. 

Let us recall the Gaussian estimates for the heat kernel $p$ proved in \cite{jiang2014heat}, where we are going to use them only specialized to the case $\epsilon=1$: for any $\epsilon \in (0, 1]$ there exists a positive constant $C:=C(K, N, \epsilon)$ depending only on $K, N$ and $\epsilon$ such that for any $x, y \in \XX$ and any $0<t<1$,
\begin{equation}\label{eq:gaussian}
\frac{C}{\meas (B_{\sqrt{t}}(x))}\exp \left(-\frac{\dist (x, y)^2}{(4-\epsilon)t}-Ct \right) \le p(x, y, t) \le \frac{C}{\meas (B_{\sqrt{t}}(x))}\exp \left( -\frac{\dist (x, y)^2}{(4+\epsilon)t}+Ct \right),
\end{equation}
and for every $y\in\X$ and $t>0$ we have 
\begin{equation}\label{eq:equi lip}
|\d p_{y, t}|(x)\le \frac{C}{\sqrt{t}\meas (B_{\sqrt{t}}(x))}\exp \left(-\frac{\dist(x, y)^2}{(4+\epsilon) t}+Ct\right)\qquad\mm\text{-a.e.\ }x\in\X.
\end{equation}
Notice that \eqref{eq:gaussian} and Lemma \ref{lem:volume} ensure that $p(\cdot,y,t)\in L^2(\X,\mm)$ for every $y\in\X$, $t>0$, therefore from \eqref{eq:heatflowheatkernel} we deduce the
 Chapman-Kolmogorov equation:
\begin{equation}\label{eq:ChapKol}
p(x,y,t+s)=\heat_{t} p(\,\cdot\,,y,s) (x)=\int_\XX p(x,z,t) p (z,y,s)\dd{\mass(z)}\quad \forall t, s>0, \forall x, y \in \XX.
\end{equation}
Also, from   \eqref{eq:heatfl}, \cite[Corollary 2.7]{Sturm96II} and \eqref{eq:gaussian}  we deduce the estimate
\begin{equation}\label{eq:lapheatbd}
\left|\Delta p(\cdot, y, t)\right|(x) \le \frac{C}{t\meas (B_{\sqrt{t}}(x))}\exp \left( -\frac{\dist (x, y)^2}{(4+\epsilon)t}+Ct\right)\qquad\mm\text{-a.e.\ }x\in\X,
\end{equation}
for every $y\in\X$, $t>0$. Notice that the above discussion and estimates easily imply that
\[
p_{y,t}\in\mathrm{TestF}(\XX,\dist,\meas)
\] 
for every $y\in\X$, and $t>0$. We shall frequently use this fact. For future reference we also notice that \eqref{eq:ChapKol} and the estimate \eqref{eq:gaussian} together with \eqref{eq:classheat1}, \eqref{eq:bishgrom} and Lemma \ref{lem:volume} give
\begin{equation}
\label{eq:stimah12}
\|p_{y,t}\|_{H^{1,2}}+\|\Delta p_{y,t}\|_{H^{1,2}}\leq C(K,N,t)\mm(B_{\sqrt t}(y))^{-\frac12}.
\end{equation}
We also notice that the identity $\partial_tp(x,y,t)=\partial_tp_{y,t}(x)=\Delta p_{y,t}(x)=\Delta_xp(x,y,t)$ valid for any $t>0$, $y\in\X$ and a.e.\ $x$ together with the symmetry in $x,y$ of the heat kernel - and thus of the LHS - gives
\begin{equation}
\label{eq:scambioLap}
\Delta_x p(x,y,t)=\Delta_y p(x,y,t)\qquad (\mm\times\mm)\text{-a.e.\ }(x,y),\ \forall t>0.
\end{equation}
We conclude pointing out that the continuity of the heat kernel and the estimates \eqref{eq:gaussian} ensure that for any $t>0$ the map $y\mapsto p_{y,t}\in L^2(\X,\mm)$ is continuous. Thus by the first claim in Proposition \ref{prop:lochille} we deduce that $y\mapsto \d p_{y,t}\in L^2(T^*(\X,\sfd,\mm))$ is strongly Borel. Similarly for $y\mapsto \Delta p_{y,t}$ and $y\mapsto \d\Delta p_{y,t}$.

\subsection{Smoothing metrics $g_t$ and computation of $\nabla^*g_t$}
In order to introduce the main tool in this paper, i.e.\ the \textit{smoothing metrics} $g_t$, let us start this subsection by observing the smooth case as follows.

For an $n$-dimensional weighted complete Riemannian manifold $(M, g, e^{-V}\dd \mathrm{Vol}_g)$ satisfying ${\rm Ric}_N\geq Kg$ for some $K \in \mathbb{R}$ and some $N \in [n, \infty)$ (namely $(M, \dist_g, e^{-V} \mathrm{Vol}_g)$ is an $\RCD(K, N)$ space, recall \eqref{eq:beric} and \eqref{eq:bekn}), for any $t>0$, define the map $\Phi_t:M \to L^2(M, e^{-V} \mathrm{Vol}_g)$ by 
\[
\Phi_t(x):=(y \mapsto p(x, y, t)) \in L^2(M, e^{-V} \mathrm{Vol}_g).
\]
Then the pull-back $g_t:=(\Phi_t)^*g_{L^2}$ is well-defined as a smooth tensor of type $(0, 2)$ and it satisfies
\[
g_t(x)=\int_M \d_xp_{y, t}(x)\otimes\d _x p_{y, t}(x) e^{-V(y)}\dd \mathrm{Vol}_g(y)\qquad \forall x \in M
\]
where it is emphasized that the RHS of the above makes sense as Bochner integral for \textit{any} $x \in M$ because of (\ref{eq:equi lip}).  
In particular, thanks to Fubini's theorem for all smooth vector fields $V_i\ (i=1, 2)$ on $M$ with bounded supports we have 
\[
\int_M g_t(V_1, V_2)e^{-V}\dd \mathrm{Vol}_g =\int_M\int_M \d_xp_{y, t} (V_1)(x) \,\d_x p_{y, t}( V_2)(x)e^{-V(x)-V(y)}\dd \mathrm{Vol}_g(x)\dd \mathrm{Vol}_g(y)
\]
and it is easy to see that this equation also characterizes $g_t$.

Let us generalize this observation to an $\RCD(K, N)$ space $(\XX, \dist, \meas)$ as follows. Start noticing that the identity $|\d p_{y,t}\otimes \d p_{y,t}|_{\sf HS}=|\d p_{y,t}|^2$, \eqref{eq:stimelinftyd} below and Lemma \ref{lem:volume} ensure that for every $t>0$ the map $y\mapsto \d p_{y,t}\otimes \d p_{y,t}$ is in $L^1(\X,\mm; L^2_{loc}((T^*)^{\otimes 2}(\X,\sfd,\mm)))$, namely for a bounded $A \subset \XX$ and a fixed $\bar x \in A$,
\[
\begin{split}
\int_\XX\sqrt{\int_A  |\d p_{y,t}|^2\,\d\mm(x)}\,\d\mm(y)\leq 
  C \sqrt{\meas (A)}\int_\XX e^{-\frac{\sfd^2(\bar x, y)}{5 t}}\,\d\mm(y)<\infty.
\end{split}
\]
 Hence the following definition is well-posed:

\begin{definition}[Smoothing metrics $g_t$]\label{def:gt} Let $(\X,\sfd,\mm)$ be an $\RCD(K,N)$ space. We define the $(0, 2)$ tensor $g_t\in L^0((T^*)^{\otimes 2}(\X,\sfd,\mm))$ on $\XX$ as
\[
g_t:=\int_\XX\d_xp_{y, t}\otimes\d _x p_{y, t} \,\d\mm(y).
\]
\end{definition}

Notice that the basic properties of Bochner integration (Hille's Theorem) ensure that for $V_1,V_2\in L^2(T(\X, \dist, \meas))$ with bounded support we have
\[
\int_\XX g_t(V_1, V_2) \,\d\mm= \int_\XX\int_\XX \d p_{y, t} (V_1)\,\d p_{y, t}(V_2)  \di \meas \di \meas(y).
\]
After a normalization of $g_t$ as follows, the smoothing metrics are uniformly bounded  in $L^{\infty}$:
\begin{proposition} Let $(\X,\sfd,\mm)$ be an $\RCD(K,N)$ space. Then we have 
\begin{equation}
\label{eq:Linfgt}
    t\meas(B_{\sqrt{t}}(\,\cdot\,))g_t\le C(K,N) g\qquad\mm\text{-a.e.,\ } \forall t \in (0, 1]
\end{equation}
in the sense of symmetric tensors.
In particular we have $g_t \in L_{{loc}}^{\infty}((T^*)^{\otimes 2}(\XX, \dist, \meas))$ and moreover $t\meas(B_{\sqrt{t}}( \cdot ))g_t \in L^{\infty}((T^*)^{\otimes 2}(\XX, \dist, \meas))$.
\end{proposition}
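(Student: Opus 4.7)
The plan is to reduce the tensor inequality to the pointwise scalar bound
\[
\int_{\XX}|\d p_{y,t}|^2(x)\,\d\mm(y)\le \frac{C(K,N)}{t\,\mm(B_{\sqrt t}(x))}\qquad\mm\text{-a.e.\ }x,\ \forall t\in(0,1],
\]
by first observing that, $\mm$-a.e., the integrand $\d_x p_{y,t}\otimes \d_x p_{y,t}$ in the definition of $g_t$ is a rank-one positive semi-definite symmetric tensor satisfying
\[
\d_x p_{y,t}\otimes \d_x p_{y,t}\le |\d p_{y,t}|^2(x)\,g(x)
\]
as symmetric tensors (indeed, for any $V\in L^0(T(\X,\sfd,\mm))$ we have $|\d p_{y,t}(V)(x)|^2\le |\d p_{y,t}|^2(x)\,|V|^2(x)$ by pointwise Cauchy-Schwarz). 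Passing from this pointwise-in-$y$ tensor domination to one for the Bochner integral $g_t$ is justified by testing against an arbitrary vector field $V$ and applying Fubini, once the integrability needed below is verified.

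To establish the scalar bound I would square the Gaussian gradient estimate \eqref{eq:equi lip} (applied with $\epsilon=1$) to get
\[
|\d p_{y,t}|^2(x)\le \frac{C(K,N)}{t\,\mm(B_{\sqrt t}(x))^2}\exp\!\left(-\frac{2\dist^2(x,y)}{5t}+2Ct\right),
\]
then absorb the factor $e^{2Ct}\le e^{2C}$ using the restriction $t\in(0,1]$ (which is exactly where this hypothesis enters), integrate in $y$, and invoke Lemma \ref{lem:volume} with $\alpha=0$ and $\beta=2/5$, which yields
\[
\int_{\XX}\exp\!\left(-\frac{2\dist^2(x,y)}{5t}\right)\d\mm(y)\le C(K,N)\,\mm(B_{\sqrt t}(x)).
\]
Combining the two displays produces the advertised scalar estimate, whence the tensor inequality $t\mm(B_{\sqrt t}(\cdot))g_t\le C(K,N)\,g$.

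The two integrability assertions are then immediate. Since $|g|_{\sf HS}=\sqrt n$ by \eqref{eq:normg}, the inequality just proved bounds the Hilbert-Schmidt norm of $t\mm(B_{\sqrt t}(\cdot))g_t$ by $C(K,N)\sqrt n$ $\mm$-a.e., placing it in $L^{\infty}((T^*)^{\otimes 2}(\X,\sfd,\mm))$. For the localized claim about $g_t$ alone, fix any bounded open $U$: the function $x\mapsto \mm(B_{\sqrt t}(x))$ is uniformly bounded away from zero on $U$ by \eqref{eq:boundmeasballs} (comparing with any fixed base point), so dividing the $\mm$-a.e.\ inequality by $t\mm(B_{\sqrt t}(\cdot))$ preserves essential boundedness on $U$. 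There is no substantive obstacle in this argument---it is essentially a direct computation using the tools already built up---and the only point deserving attention is the Fubini step needed to push the pointwise-in-$y$ tensor domination through the Bochner integral defining $g_t$.
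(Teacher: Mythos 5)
Your proof is correct and follows essentially the same route as the paper's: test $g_t$ against an arbitrary vector field, apply pointwise Cauchy--Schwarz to reduce to the scalar bound $\int_\XX|\d p_{y,t}|^2(x)\,\d\mm(y)$, square the Gaussian gradient estimate \eqref{eq:equi lip}, and close with Lemma \ref{lem:volume} at $\alpha=0$. The only cosmetic difference is that you track the factor $\beta=2/5$ explicitly where the paper absorbs it into $C$, and you spell out the integrability consequences that the paper leaves implicit.
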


\begin{proof}
Fix $V\in L^0(T(\X,\sfd,\mm))$ and notice that for $\mm$-a.e.\ $x$ we have
\[
\begin{split}
t\mm(B_{\sqrt t}(x))g_t(V,V)(x)\leq t\mm(B_{\sqrt t}(x))|V|^2(x)\int_\XX |\d p_{y,t}|^2(x)\,\d\mm(y)\\
\text{(by \eqref{eq:equi lip})}\qquad\leq \frac{C|V|^2(x)}{\mm(B_{\sqrt t}(x))}\int_\XX \exp \left(-\frac{\dist(x, y)^2}{5 t}+Ct\right)\,\d\mm(y).
\end{split}
\]
The conclusion follows from  Lemma \ref{lem:volume} (with $\alpha=0$).
\end{proof}
We now turn to the computation of $\nabla^*g_t$. To this aim, it is convenient to introduce the following function:
\[
    p_t(x)\defeq p(x,x,t)\stackrel{\eqref{eq:ChapKol}}=\int_\X p_{y,t/2}^2(x)\,\d\mm(y).
\]
Notice that thanks to the bounds \eqref{eq:gaussian} it is easy to see that for every $t>0$ the map $y\mapsto p_{y,t/2}^2$ is in $L^1(\X,\mm;L^2(\X,\mm))$. It is then clear that the identity $p_t=\int_\XX p_{y,t/2}^2\,\d\mm(y)$ holds also in the sense of Bochner integrals.

 Let us start collecting some estimates for this function:
 
\begin{lemma}\label{lem:diagonalformula} Let $(\X,\sfd,\mm)$ be an $\RCD(K,N)$ space. 

Then for any $t >0$ we have $p_{2t}(x) \in  D_{loc}(\Delta)$ with 
\begin{equation}\label{eq:dptbound}
\d p_{t}=2\int_\X p_{y,t/2}\d p_{y,t/2}\,\d\mm(y)\qquad\text{and}\qquad|\d  p_{t}|\le \frac{C(K, N)}{\sqrt{t}\meas (B_{\sqrt{t}}(\cdot))}\qquad \mm\text{-a.e.}
\end{equation}
and
\begin{equation}\label{eq:laplacianbound}
\Delta p_{t}=2\int_\X p_{y,t/2}\Delta p_{y,t/2}+|\d p_{y,t}|^2\,\d\mm(y)\qquad\text{and}\qquad|\Delta p_{t}| \le \frac{C(K, N)}{t\meas (B_{\sqrt{t}}(\cdot))} \qquad \meas\text{-a.e..}
\end{equation}
Finally, we also have $\Delta p_{t}\in H^{1,2}_{loc}(\X,\sfd,\mm)$ with
\begin{equation}
\label{eq:ddeltap2t}
\d\Delta p_{t}=2\int_\XX  \d p_{y,t/2}\Delta p_{y,t/2}+p_{y,t/2}\d \Delta p_{y,t/2}+\d|\d p_{y,t}|^2\,\d\mm(y).
\end{equation}
It is part of the claim the fact that the integrands in \eqref{eq:dptbound} and \eqref{eq:ddeltap2t} belong to the space $L^1(\X,\mm;L^2_{loc}(T^*(\X,\sfd,\mm)))$ and the one in \eqref{eq:laplacianbound} belongs to the space $L^1(\X,\mm;L^2_{loc}(\X,\mm))$.
\end{lemma}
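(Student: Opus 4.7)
The strategy is to apply the local Hille theorem (Proposition \ref{prop:lochille}) twice to the Chapman--Kolmogorov representation
\[
p_{2t}(x)=\int_{\XX} p_{y,t}^2(x)\,\d\mm(y),
\]
first to commute $\d$ and $\Delta$ with the integral in $y$, and then, after obtaining the integral formula for $\Delta p_{2t}$, to commute $\d$ with the integral once more and derive \eqref{eq:ddeltap2t}. For each fixed $y$, the fact noted before the lemma that $p_{y,t}\in\mathrm{Test}F(\XX,\dist,\meas)$ gives $p_{y,t}^2\in\mathrm{Test}F\subset D(\Delta)$ with the standard Leibniz identities
\[
\d(p_{y,t}^2)=2p_{y,t}\,\d p_{y,t},\qquad \Delta(p_{y,t}^2)=2p_{y,t}\,\Delta p_{y,t}+2|\d p_{y,t}|^2,
\]
which are precisely the integrands appearing in \eqref{eq:dptbound} and \eqref{eq:laplacianbound}. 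The strong Borel regularity of $y\mapsto \d p_{y,t}$, $y\mapsto \Delta p_{y,t}$ and $y\mapsto \d\Delta p_{y,t}$ recorded at the end of Subsection 3.2 takes care of the measurability hypotheses in Proposition \ref{prop:lochille}.

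Granted this setup, the first application of Proposition \ref{prop:lochille} requires verifying, for every bounded open $U\subset\XX$, that $y\mapsto \chi_U p_{y,t}\d p_{y,t}$ and $y\mapsto \chi_U(p_{y,t}\Delta p_{y,t}+|\d p_{y,t}|^2)$ are in $L^1(\XX,\mm;L^2)$. This follows by bounding the integrands pointwise via \eqref{eq:gaussian}, \eqref{eq:equi lip}, \eqref{eq:lapheatbd} with $\eps=1$, which yields, for instance,
\[
p_{y,t}(x)|\d p_{y,t}|(x)\le \frac{C(K,N)}{\sqrt t\,\mm(B_{\sqrt t}(x))^2}\exp\Big(-\frac{2\,\dist^2(x,y)}{5t}+Ct\Big),
\]
and then integrating in $y$ via Lemma \ref{lem:volume} (with $\alpha=0$, $\beta=2/5$) to obtain the $L^\infty$ bound in \eqref{eq:dptbound}. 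Analogous estimates based on \eqref{eq:gaussian}, \eqref{eq:equi lip} and \eqref{eq:lapheatbd} give the bound in \eqref{eq:laplacianbound}. Proposition \ref{prop:lochille} then delivers $p_{2t}\in H^{1,2}_{loc}(\XX,\sfd,\mm)\cap D_{loc}(\Delta)$ together with the identities in \eqref{eq:dptbound} and \eqref{eq:laplacianbound}.

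For \eqref{eq:ddeltap2t} we apply Proposition \ref{prop:lochille} once more, now to the map $y\mapsto 2(p_{y,t}\Delta p_{y,t}+|\d p_{y,t}|^2)$ representing $\Delta p_{2t}$. Each summand lies in $H^{1,2}_{loc}$ for a.e.\ $y$: for $p_{y,t}\Delta p_{y,t}$ this follows from the Leibniz rule together with $p_{y,t}\in\mathrm{Test}F$ and $\Delta p_{y,t}\in H^{1,2}$ (cf.\ \eqref{eq:stimah12}), while for $|\d p_{y,t}|^2$ it is a standard fact about test functions on $\RCD$ spaces (Subsection \ref{subsec:diff}). The Leibniz rule for the first term yields exactly the two terms $\d p_{y,t}\,\Delta p_{y,t}+p_{y,t}\,\d\Delta p_{y,t}$ appearing in \eqref{eq:ddeltap2t}. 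The main obstacle is to show that $y\mapsto \chi_U\,\d|\d p_{y,t}|^2$ is in $L^1(\XX,\mm;L^2(T^*(\XX,\sfd,\mm)))$ for $U\Subset\XX$ open and bounded, since this requires control of the Hessian of the heat kernel. To this end I would invoke the improved Bochner inequality \eqref{eq:bochnerhessi} together with a Lipschitz cut-off $\phi$ supported in a slightly larger ball $U'\Supset U$, obtaining
\[
\int_U |\mathrm{Hess}\,p_{y,t}|_{\sf HS}^2\,\d\mm \le C\int_{U'}\bigl((\Delta p_{y,t})^2+|K|\,|\d p_{y,t}|^2+|\d p_{y,t}|^2\bigr)\,\d\mm,
\]
whose right-hand side, by the Gaussian bounds \eqref{eq:gaussian}, \eqref{eq:equi lip}, \eqref{eq:lapheatbd} and Lemma \ref{lem:volume}, is integrable in $y$ against $\mm$; combined with the pointwise inequality $|\d|\d p_{y,t}|^2|\le 2|\mathrm{Hess}\,p_{y,t}|_{\sf HS}\,|\d p_{y,t}|$ and Cauchy--Schwarz, this gives the required integrability. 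Proposition \ref{prop:lochille} then yields $\Delta p_{2t}\in H^{1,2}_{loc}$ together with the formula \eqref{eq:ddeltap2t}, concluding the proof.
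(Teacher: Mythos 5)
Your proof is essentially correct and follows the same overall strategy as the paper: apply the local Hille theorem (Proposition \ref{prop:lochille}) to the Chapman--Kolmogorov representation $p_{2t}=\int_\XX p_{y,t}^2\,\d\mm(y)$ twice, once for $\d$ and $\Delta$, and once more on the resulting formula for $\Delta p_{2t}$. (Incidentally, you consistently and correctly use the parameter $t$ in $p_{y,t}$, matching the Chapman--Kolmogorov formula and the subsequent use in Theorem \ref{thm:DiffKeyFormula2}; the $t/2$'s appearing in the lemma's display \eqref{eq:dptbound}--\eqref{eq:ddeltap2t} appear to be inherited typos from the definition of $p_t$ two lines above and should be read as $t$.) The one genuine difference is in how you control the Hessian term $\d|\d p_{y,t}|^2$ for the last integrability claim. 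You propose to localize the Hessian bound via the improved Bochner inequality \eqref{eq:bochnerhessi} together with a cut-off $\phi$ supported in $U'$ with bounded Laplacian, an argument that also requires integration by parts (to convert $-\phi\langle\nabla\Delta p_{y,t},\nabla p_{y,t}\rangle$ into a $(\Delta p_{y,t})^2$ contribution) and Young's inequality — steps you do not write out. The paper avoids all of this by instead using the global Bochner bound \eqref{eq:bochner2}, $\||\mathrm{Hess}\,p_{y,t}|_{\sf HS}\|_{L^2(\XX)}\le C(\|\Delta p_{y,t}\|_{L^2}+\||\d p_{y,t}|\|_{L^2})\le C\,\mm(B_{\sqrt t}(y))^{-1/2}$ by \eqref{eq:stimah12}, and then multiplies by the local sup bound $\||\d p_{y,t}|\|_{L^\infty(A)}\le C e^{-\sfd^2(y,\bar x)/(5t)}$, which feeds cleanly into Lemma \ref{lem:volume}. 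Both routes reach the same conclusion; the paper's global estimate is a bit shorter and sidesteps the need to produce cut-offs with bounded Laplacian. As a small matter of exposition, your sentence ``whose right-hand side \dots is integrable in $y$'' should be made precise: what is actually needed is that $\||\d p_{y,t}|\|_{L^\infty(U)}\bigl(\int_{U'}(\Delta p_{y,t})^2+(1+|K|)|\d p_{y,t}|^2\,\d\mm\bigr)^{1/2}$ is integrable in $y$, which the Gaussian bounds and Lemma \ref{lem:volume} do give after the steps above are filled in.
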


\begin{proof} Using \eqref{eq:gaussian} and \eqref{eq:equi lip}  we get
\[
\begin{split}
\int_\XX |\d ( p_{y,t/2}^2)|\,\d\mm(y)&\leq 2\int_\XX p_{y,t/2}|\d p_{y,t/2}|\,\d\mm(y)\leq\frac{C}{\sqrt t\mm(B_{\sqrt t}(\cdot))^2}\int_\XX \exp(-\frac {2\sfd^2(\cdot,y)}{5t}+Ct)\,\d\mm(y).
\end{split}
\]
Thus from Lemma \ref{lem:volume} and Proposition \ref{prop:lochille} we deduce that 
$p_{t}\in H^{1,2}_{loc}(\X,\sfd,\mm)$ and that \eqref{eq:dptbound} holds. Similarly,  starting from 
\[
\begin{split}
\int_\XX| \Delta (p_{y,t/2}^2)|\,\d\mm(y)\leq2\int_\XX p_{y,t/2}|\Delta p_{y,t/2}|+|\d p_{y,t/2}|^2\,\d\mm(y)
\end{split}
\]
and using the estimates \eqref{eq:gaussian}, \eqref{eq:equi lip} and \eqref{eq:lapheatbd}  and then again Lemma \ref{lem:volume} and Proposition \ref{prop:lochille}, we conclude that 
$p_{t}(x) \in  D_{{loc}}(\Delta)$ and that \eqref{eq:laplacianbound} holds.

For the last claim we recall that  $p_{y,t}\in \mathrm{TestF}(\XX,\dist,\meas)$, thus the Leibniz rule for the Laplacian and the basic properties of test functions give $p_{y,t/2}\Delta p_{y,t/2}+|\d p_{y,t}|^2\in H^{1,2}(\X,\sfd,\mm)$ with 
\[
\d(p_{y,t/2}\Delta p_{y,t/2}+|\d p_{y,t}|^2)=\d p_{y,t/2}\Delta p_{y,t/2}+p_{y,t/2}\d\Delta p_{y,t/2}+2{\rm Hess}\,p_{y,t}(\nabla p_{y,t},\,\cdot\,).
\] 
The fact that the first two addends in the RHS are in $L^1(\X,\mm;L^2_{loc}(T^*(\X,\sfd,\mm)))$ can be proved as before. For the last one we let $A\subset \X$ be Borel and bounded and $\bar x\in \X$. Then we have $\sfd(x,y)\geq \sfd(y,\bar x)-R$ for any $x\in A$, $y\in\X$ and some $R>0$ independent on $x,y$. Hence \eqref{eq:equi lip} implies that $\||\d p_{y,t}|\|_{L^\infty(A)}\leq Ce^{-\frac{\sfd^2(y,\bar x)}{5t}}$ for some $C=C(t,K,N,A,\bar x)$, thus
\[
\begin{split}
\int_\XX\sqrt{\int_A  |{\rm Hess}\,p_{y,t}(\nabla p_{y,t},\,\cdot\,)|^2\,\d\mm(x)}\,\d\mm(y)&\leq \int_\XX \||{\rm Hess}\,p_{y,t}|_{\sf HS}\|_{L^2}\| |\d p_{y,t}|\|_{L^\infty(A)}\,\d\mm(y)\\
\textrm{(by \eqref{eq:bochner2})}\qquad&\leq  C \int_\XX (\|\Delta p_{y,t}\|_{L^2} +\||\d p_{y,t}|\|_{L^2})e^{-\frac{\sfd^2(y,\bar x)}{5t}}\,\d\mm(y)\\
\textrm{(by \eqref{eq:stimah12})}\qquad&\leq  C \int_\XX \mm(B_{\sqrt t}(y))^{-\frac12}e^{-\frac{\sfd^2(y,\bar x)}{5t}}\,\d\mm(y)<\infty,
\end{split}
\]
where the last inequality comes from Lemma \ref{lem:volume}. The conclusion follows.
\end{proof}
To further analyze the link between $g_t$ and $p_t$ the following result will be crucial: 
\begin{lemma}\label{lem:MoveLap2}
Let $(\X,\sfd,\mm)$ be an $\RCD(K,N)$ space. Then for every $t>0$ we have that $y\mapsto \Delta p_{y,t}\d p_{y,t}$ and $y\mapsto  p_{y,t}\d\Delta p_{y,t}$ are both in $L^1(\X,\mm;L^2_{loc}(T^*(\X,\sfd,\mm)))$ and
\[
\int_\XX \Delta p_{y,t}\d p_{y,t}\,\d\mm(y)=\int_\XX p_{y,t}\d\Delta p_{y,t}\,\d\mm(y).
\]
\end{lemma}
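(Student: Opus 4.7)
The plan is to exploit the symmetries $p(x,y,t)=p(y,x,t)$ and $\Delta_x p=\Delta_y p$ (equation \eqref{eq:scambioLap}) to reduce the claimed identity to a scalar symmetric integration by parts in the $y$-variable. The $L^1(\XX,\mm;L^2_{loc}(T^*(\XX,\dist,\mass)))$ integrability of both integrands follows from the Gaussian bounds \eqref{eq:gaussian}, \eqref{eq:equi lip}, \eqref{eq:lapheatbd} combined with Lemma~\ref{lem:volume}, arguing exactly as in the proof of Lemma~\ref{lem:diagonalformula}; the required measurability is inherited from Proposition~\ref{prop:lochille} and the comments at the end of the previous subsection.

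The key computation uses Chapman-Kolmogorov \eqref{eq:ChapKol} at the midpoint $s=t/2$ to factor both $\d p_{y,t}$ and $\d\Delta p_{y,t}$. Writing
\[
p(x,y,t)=\int_\XX p(x,z,t/2)\,p(z,y,t/2)\,\d\mm(z)
\]
and commuting $\d_x$ with the $z$-integral via Proposition~\ref{prop:lochille} (the required $L^1$-in-$z$ bounds come from \eqref{eq:equi lip} and Lemma~\ref{lem:volume}) yields
\[
\d p_{y,t}(x)=\int_\XX p(z,y,t/2)\,\d p_{z,t/2}(x)\,\d\mm(z).
\]
To treat the second integrand, use \eqref{eq:scambioLap} to rewrite $\Delta p_{y,t}(x)=\Delta_y p(x,y,t)$, commute $\Delta_y$ with the $z$-integral in the above Chapman-Kolmogorov identity, and then apply $\d_x$ under the integral, obtaining
\[
\d\Delta p_{y,t}(x)=\int_\XX \Delta_y p(z,y,t/2)\,\d p_{z,t/2}(x)\,\d\mm(z).
\]

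Substituting these expressions into $A:=\int \Delta p_{y,t}\,\d p_{y,t}\,\d\mm(y)$ and $B:=\int p_{y,t}\,\d\Delta p_{y,t}\,\d\mm(y)$ and swapping the $y$- and $z$-integrations by Fubini, both quantities reduce to outer $z$-integrals of $\d p_{z,t/2}(x)$ paired with scalar inner integrals in~$y$:
\[
I_A(x,z):=\int_\XX \Delta_y p(x,y,t)\,p(z,y,t/2)\,\d\mm(y),\qquad I_B(x,z):=\int_\XX p(x,y,t)\,\Delta_y p(z,y,t/2)\,\d\mm(y).
\]
But for fixed $x,z$ the functions $f(y):=p(x,y,t)$ and $g(y):=p(z,y,t/2)$ are test functions in $y$ (they are images of $L^2$ functions under the heat flow, via Chapman-Kolmogorov) lying globally in $D(\Delta)$, with $L^2$ gradient and Laplacian by the Gaussian estimates. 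The scalar symmetric IBP $\int(\Delta_y f)g\,\d\mm=\int f(\Delta_y g)\,\d\mm$ then gives $I_A(x,z)=I_B(x,z)$, whence $A=B$.

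The principal obstacle is the bookkeeping: at each application of Hille (Proposition~\ref{prop:lochille}) or Fubini one must verify the relevant $L^1$-in-parameter, $L^2_{loc}$-in-space integrability, and likewise for the final scalar IBP one needs global $L^2$ control on $f,g,\nabla f,\nabla g,\Delta f,\Delta g$. Each such verification reduces to combining the Gaussian bounds \eqref{eq:gaussian}--\eqref{eq:lapheatbd} with Lemma~\ref{lem:volume}, in a manner entirely analogous to the estimates carried out in the proof of Lemma~\ref{lem:diagonalformula}, so no new analytic ingredient is required beyond those already developed.
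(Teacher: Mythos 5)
Your proof is correct, but it takes a genuinely different route from the paper's. The paper also starts from Chapman--Kolmogorov, but uses it to relate the quantities at times $t$ and $t+s$: one shows $\int_\XX p_{z,t+s}\,\d p_{z,t}\,\d\mm(z)=\int_\XX p_{y,t}\,\d p_{y,t+s}\,\d\mm(y)$, subtracts the symmetric term $\int_\XX p_{y,t}\,\d p_{y,t}\,\d\mm(y)$ from both sides, divides by $s$, and sends $s\to0^+$, the crux being the estimate $\|F_{y,t}\|_{L^2}\le s\,C(K,N,t)\,\mm(B_{\sqrt{t/3}}(y))^{-1/2}$ on the second-order time-difference $F_{y,t}=\tfrac{p_{y,t+s}-p_{y,t}}{s}-\Delta p_{y,t}$ (obtained by two applications of \eqref{eq:classheat1} together with \eqref{eq:l2decr} and \eqref{eq:stimah12}). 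You instead apply Chapman--Kolmogorov at the midpoint $t/2$ to factor both $\d p_{y,t}$ and $\d\Delta p_{y,t}$ over an auxiliary variable $z$ with common 1-form part $\d p_{z,t/2}$, swap the $y$- and $z$-integrals, and reduce to the scalar self-adjointness identity $\int(\Delta_y f)\,g\,\d\mm=\int f\,(\Delta_y g)\,\d\mm$ with $f=p_{x,t}$, $g=p_{z,t/2}\in \mathrm{TestF}$, which holds for all $x,z$. Your route dispenses with the $s\to 0^+$ limit and the $F_{y,t}$ estimate, at the cost of an extra auxiliary integration and a few more Hille/Fubini verifications; both arguments ultimately rest on the same Gaussian bounds \eqref{eq:gaussian}--\eqref{eq:lapheatbd} and Lemma~\ref{lem:volume}. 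Two small points worth stating explicitly in a full write-up: (a) the replacement $\Delta p_{y,t}(x)=\Delta p_{x,t}(y)$ uses \eqref{eq:scambioLap} and therefore holds only for $(\mm\times\mm)$-a.e.\ $(x,y)$, which is harmless since the final identity is between $L^2_{loc}$ objects, but should be noted before passing to $I_A,I_B$; (b) after Fubini, pulling the $y$-integral of the scalar factor in front of the fixed 1-form $\d p_{z,t/2}$ requires justification (e.g.\ by testing against $V\in L^\infty(T(\X,\dist,\mass))$ with bounded support and arguing purely scalarly, or by noting that multiplication by $\d p_{z,t/2}$ is continuous from $L^\infty(A)$ to $L^2(T^*)$ on bounded $A$), since the scalar factor depends on $x$ and not only on $y,z$.
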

\begin{proof} For the first part of the claim we argue as in the proof of Lemma \ref{lem:diagonalformula} above: let $A\subset \X$ be Borel and bounded and $\bar x\in \X$.  
Then, \eqref{eq:equi lip} implies that for some $C=C(t,K,N,A,\bar x)$
\begin{equation}
\label{eq:stimelinftyd}
\||\d p_{y,t}|\|_{L^\infty(A)}\leq\Big \Vert Ce^{-\frac{\sfd^2(y, \,\cdot\,)}{{(4+1/2)}t}}\Big\Vert_{L^\infty(A)}\leq Ce^{-\frac{\sfd^2(y,\bar x)}{5t}},
\end{equation}
so that,
\[
\int_\XX \sqrt{\int_A| \Delta p_{y,t}\d p_{y,t}|^2\,\d\mm(x) }\,\d\mm(y)\leq C\int_\XX \|\Delta p_{y,t}\|_{L^2}e^{-\frac{\sfd^2(y,\bar x)}{5t}}\,\d\mm(y)<\infty,
\]
having used the bound \eqref{eq:stimah12} and Lemma \ref{lem:volume} in the last step. This proves that  $y\mapsto \Delta p_{y,t}\d p_{y,t}$  is  in $L^1(\X,\mm;L^2_{loc}(T^*(\X,\sfd,\mm)))$ and an analogous argument gives the same for $y\mapsto  p_{y,t}\d\Delta p_{y,t}$.

Now write the Chapman-Kolmogorov equation \eqref{eq:ChapKol} as
\[
\int_\XX p(y,z,s) p_{z,t}\,\d \mm(z)=p_{y,t+s}
\]
and observe that the estimates \eqref{eq:gaussian}, \eqref{eq:equi lip} and the same arguments just used ensure that for any $y\in\X$ the maps $z\mapsto p(y,z,s) p_{z,t}$ and $z\mapsto p(y,z,s) \d p_{z,t}$ are in $L^1(\X,\mm;L^2_{loc}(\X,\mm))$ and $L^1(\X,\mm;L^2_{loc}(T^*(\X,\sfd,\mm)))$ respectively. Thus Proposition \ref{prop:lochille} gives
\[
\int_\XX p(y,z,s)\d p_{z,t}\,\d \mm(z)=\d p_{y,t+s}.
\]
Multiplying both sides by $p_{y,t}$, integrating in $y$ and using Fubini's theorem we obtain
\[
\int_\XX p_{z,t+s}\d p_{z,t}\,\d\mm(z)\stackrel{\eqref{eq:ChapKol}}=\int_\XX\int_\XX p_{y,t}\,p(y,z,s)\d p_{z,t}\,\d \mm(z)\,\d\mm(y)=\int_\XX p_{y,t}\d p_{y,t+s}\,\d\mm(y).
\]
Thus to conclude it is sufficient to prove that as $s\to 0^+$ we have
\begin{equation}
\label{eq:togetml}
\begin{split}
\int_\XX \frac{p_{y,t+s}-p_{y,t}}s\d p_{y,t}\,\d\mm(y)&\to \int_\XX\Delta p_{y,t}\d p_{y,t}\,\d\mm(y),\\
\int_\XX p_{y,t}\d \Big(\frac{p_{y,t+s}-p_{y,t}}s\Big)\,\d\mm(y)&\to \int_\XX  p_{y,t}\d \Delta p_{y,t}\,\d\mm(y)
\end{split}
\end{equation}
in $L^2_{loc}(T^*(\X,\sfd,\mm))$. We start noticing that from \eqref{eq:ChapKol} we have
\[
\begin{split}
F_{y,t}:= \frac{p_{y,t+s}-p_{y,t}}s-\Delta p_{y,t}&=\int_0^1 \Delta(p_{y,t+rs}-p_{y,t})\,\d r\\
&=s\int_0^1 r\Delta\Big(\int_0^1\Delta p_{y,t+rsh}\,\d h\Big)\,\d r\\
&=s\int_0^1 r\Delta \heat_{t/3}\Big(\int_0^1\Delta \heat_{t/3} p_{y,t/3+rsh}\,\d h\Big)\,\d r
\end{split}
\]
and therefore using twice \eqref{eq:classheat1} we obtain 
\begin{equation}
\label{eq:Fyt}
\|F_{y,t}\|_{L^2}\leq sC(t)\int_0^1 \int_0^1\|p_{y,t/3+rsh}\|_{L^2}\,\d h\,\d r\stackrel{\eqref{eq:l2decr}}\leq  sC(t)\|p_{y,t/3}\|_{L^2}\stackrel{\eqref{eq:stimah12}}\leq s \,C(K,N,t)\mm(B_{\sqrt{\frac t3} }(y))^{-\frac12}.
\end{equation}
Thus for $A\subset\X$ Borel and bounded we have
\[
\int_\XX \sqrt{\int_A |F_{y,t}\d p_{y,t}|^2\,\d\mm}\,\d\mm(y)\stackrel{\eqref{eq:stimelinftyd}}\leq C \int_\XX \|F_{y,t}\|_{L^2}  e^{-\frac{\sfd^2(y,\bar x)}{5t}}\,\d\mm(y)\stackrel{\eqref{eq:Fyt}}\leq   s\,C\int_\XX  \frac{e^{-\frac{\sfd^2(y,\bar x)}{5t}}}{\mm(B_{\sqrt{\frac t3} }(y))^{\frac12}}\,\d\mm(y)
\]
for some $C=C(K,N,t,A,\bar x)$. Since the last integral is finite by Lemma \ref{lem:volume}, the LHS goes to 0 as $s\to 0^+$. This proves the first in \eqref{eq:togetml}. The second follows along very similar lines, we omit the details.
\end{proof}
We are now in a position to prove the main result of this subsection:
\begin{theorem}\label{thm:DiffKeyFormula2}
Let $(\X,\sfd,\mm)$ be an $\RCD(K,N)$ space. Then for every $t>0$ we have  $g_t \in D_{{loc}}(\nabla^*)$ with 
 \begin{equation}\label{eq:KeyDiff2}
     \nabla^* g_t=-\frac{1}{4}\dd \Delta p_{2t}.
 \end{equation}
\end{theorem}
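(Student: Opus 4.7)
The plan is to reduce the claim to the already-established pointwise identity from Proposition \ref{prop:adjoing formula} by integrating in the $y$ variable and commuting $\nabla^*$ with the integral by means of the local Hille theorem in Proposition \ref{prop:lochille}. Since $p_{y,t}\in\mathrm{Test}F(\XX,\dist,\meas)$ for every $y\in\XX$ and every $t>0$, Proposition \ref{prop:adjoing formula} yields, pointwise in $y$,
\[
\nabla^*(\d p_{y,t}\otimes \d p_{y,t})=-\Delta p_{y,t}\,\d p_{y,t}-\tfrac12\d|\d p_{y,t}|^2.
\]

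Next I would verify the hypotheses of Proposition \ref{prop:lochille} applied to the operator $\nabla^*$. The integrand $y\mapsto \d p_{y,t}\otimes\d p_{y,t}$ lies in $L^1(\XX,\mm;L^2_{loc}((T^*)^{\otimes 2}(\XX,\dist,\meas)))$: this is precisely the observation made just before Definition \ref{def:gt}, obtained by combining the Gaussian gradient estimate \eqref{eq:equi lip} with Lemma \ref{lem:volume}. For the right-hand side of the pointwise formula, the integrability of $y\mapsto \Delta p_{y,t}\,\d p_{y,t}$ in $L^1(\XX,\mm;L^2_{loc}(T^*(\XX,\dist,\meas)))$ is exactly the estimate carried out at the beginning of the proof of Lemma \ref{lem:MoveLap2}; while for $y\mapsto \d|\d p_{y,t}|^2=2\,\hess p_{y,t}(\nabla p_{y,t},\,\cdot\,)$ the required control is the one performed in the last part of the proof of Lemma \ref{lem:diagonalformula}, where the global Bochner inequality \eqref{eq:bochner2} and \eqref{eq:stimah12} are used to bound the Hessian in $L^2$. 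Strong Borel measurability of the integrands follows from continuity of $y\mapsto p_{y,t}$ in $L^2$ together with the first claim in Proposition \ref{prop:lochille}. With these verifications, Proposition \ref{prop:lochille} delivers $g_t\in D_{loc}(\nabla^*)$ together with
\[
\nabla^*g_t=-\int_\XX \Delta p_{y,t}\,\d p_{y,t}\,\d\mm(y)-\tfrac12\int_\XX \d|\d p_{y,t}|^2\,\d\mm(y).
\]

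It then remains to identify this expression with $-\tfrac14\d\Delta p_{2t}$. Using Lemma \ref{lem:MoveLap2} to symmetrize the first integral gives
\[
\int_\XX \Delta p_{y,t}\,\d p_{y,t}\,\d\mm(y)=\tfrac12\int_\XX \bigl(\Delta p_{y,t}\,\d p_{y,t}+p_{y,t}\,\d\Delta p_{y,t}\bigr)\,\d\mm(y)=\tfrac12\int_\XX \d\bigl(p_{y,t}\Delta p_{y,t}\bigr)\,\d\mm(y),
\]
so that, substituting into formula \eqref{eq:ddeltap2t} of Lemma \ref{lem:diagonalformula}, one obtains
\[
\d\Delta p_{2t}=4\int_\XX \Delta p_{y,t}\,\d p_{y,t}\,\d\mm(y)+2\int_\XX \d|\d p_{y,t}|^2\,\d\mm(y),
\]
which equals $-4\nabla^* g_t$, establishing \eqref{eq:KeyDiff2}.

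The main obstacle is the verification of the hypotheses of Proposition \ref{prop:lochille} in the second step: three distinct integrands must be controlled, the most delicate being $\hess p_{y,t}(\nabla p_{y,t},\cdot)$, whose $L^1(\mm;L^2_{loc})$-bound requires chaining a Gaussian gradient estimate with the Bochner-based $L^2$ Hessian bound. Conveniently, this is exactly the estimate already produced in the proof of Lemma \ref{lem:diagonalformula}, so the argument essentially reduces to an organized assembly of the preceding two lemmas.
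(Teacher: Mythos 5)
Your proof is correct and follows essentially the same route as the paper's: apply Proposition \ref{prop:adjoing formula} pointwise in $y$, verify the integrability hypotheses of Proposition \ref{prop:lochille} to commute $\nabla^*$ with the Bochner integral defining $g_t$, then symmetrize via Lemma \ref{lem:MoveLap2} and match with formula \eqref{eq:ddeltap2t} of Lemma \ref{lem:diagonalformula}. The only difference is cosmetic ordering of the final algebra (you compute $-4\nabla^*g_t$ and compare it to $\d\Delta p_{2t}$, while the paper rewrites $\nabla^*g_t$ directly as $-\tfrac14\d\Delta p_{2t}$).
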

\begin{proof} For any $y\in\X$ Proposition \ref{prop:adjoing formula} tells   $\nabla^*(\dd p_{y, t}\otimes \dd p_{y, t})=-\Delta p_{y, t}\dd  p_{y, t}-\frac{1}{2}\dd |\nabla p_{y, t}|^2$. Also, arguing as in Lemma \ref{lem:diagonalformula} it is easy to see that $y\mapsto -\Delta p_{y, t}\dd  p_{y, t}-\frac{1}{2}\dd |\nabla p_{y, t}|^2 $ belongs to the space $L^1(\X,\mm;L^2_{loc}(T^*(\X,\sfd,\mm)))$. Thus taking into account Lemma \ref{lem:MoveLap2} we obtain
\[
\begin{split}
\int_\XX \nabla^*(\dd p_{y, t}\otimes \dd p_{y, t})\,\d\mm(y)&=-\tfrac12\int_\XX \Delta p_{y, t}\dd  p_{y, t}+ p_{y, t}\dd  \Delta p_{y, t}+\dd |\d p_{y, t}|^2\,\d\mm(y)\stackrel{\eqref{eq:ddeltap2t}}=-\frac{1}{4}\dd \Delta p_{2t}.
\end{split}
\]
The conclusion  comes from the very definition of $g_t$ and Proposition \ref{prop:lochille}.
\end{proof}


\subsection{Asymptotic behaviour as $t\to 0^+$}
The goal of this subsection is to study the behaviour of $g_t$ and $\nabla^*g_t$ as $t\to0^+$.

We start with the following result, which generalizes to the non-compact setting the analogous statement \cite[Theorem 5.10]{AHPT}:

\begin{theorem}\label{thm:pullbackconvergence} Let $(\X,\sfd,\mm)$ be an $\RCD(K,N)$ space of essential dimension $n$. 

Then
$t\meas(B_{\sqrt{t}}(\,\cdot\,))g_t\to c_n g$ strongly in $L^p_{{loc}}$ for any $p\in [1,\infty)$, where $c_n$ is a positive constant depending only on $n$.
\end{theorem}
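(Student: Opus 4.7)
The strategy adapts the compact case from \cite{AHPT} to the non-compact setting, replacing spectral expansions of the heat kernel with a blow-up argument based on pointed measured Gromov-Hausdorff (pmGH) stability. The uniform bound \eqref{eq:Linfgt} combined with $|g|_{\sf HS} = \sqrt{n}$ from Proposition \ref{prop:riemannianmetric} gives $|t\meas(B_{\sqrt t}(\cdot))g_t|_{\sf HS} \le C(K,N)\sqrt{n}$ $\meas$-a.e. Hence once we establish $\meas$-a.e.\ convergence to $c_n g$, dominated convergence yields $L^p_{\loc}$-convergence for every $p \in [1,\infty)$.

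For the pointwise step, I would fix $x \in \mathcal{R}_n^*$, which covers $\meas$-a.e.\ point by Theorem \ref{thm:RN}. The rescaled pointed spaces
\[
\Big(\XX,\ t^{-1/2}\dist,\ \meas(B_{\sqrt t}(x))^{-1}\meas,\ x\Big)
\]
are $\RCD(K_t, N)$ with $K_t \to 0$, and by definition of $\mathcal{R}_n^*$ they pmGH-converge as $t\to 0^+$ to $(\R^n, \dist_{\R^n}, \omega_n^{-1}\haus^n, 0)$. The quantity $t\meas(B_{\sqrt t}(x))g_t(x)$ is scale-invariant in the sense that it equals the analogous $g_1$-construction performed on the rescaled space. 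Using pmGH stability of heat kernels on $\RCD(K,N)$ spaces, together with locally uniform convergence of their differentials (both standard $\RCD$ tools, compatible with the uniform Gaussian estimates), the limit reduces to an explicit Gaussian integral on $(\R^n, \dist_{\R^n}, \omega_n^{-1}\haus^n)$, yielding a multiple $c_n$ of the Euclidean metric tensor.

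The delicate part is passing to the limit in the integral defining $g_t$ over the \emph{non-compact} space $\XX$. The Gaussian estimates \eqref{eq:gaussian} and \eqref{eq:equi lip}, combined with Lemma \ref{lem:volume} applied uniformly along the rescaling, show that the contribution to $t\meas(B_{\sqrt t}(x))g_t(x)$ coming from $\{y : t^{-1/2}\dist(x,y) \geq R\}$ decays like $e^{-cR^2}$ uniformly in $t \in (0,1]$. On the central region, uniform convergence of the (differentials of the) heat kernels along the pmGH convergence gives convergence of the truncated integral, and letting $R\to\infty$ concludes the pointwise step; the $L^p_{\loc}$ statement then follows via the dominated convergence argument described above.

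\textbf{Main obstacle.} The technical core is the uniform control of the integrand and the passage to the limit of the $L^0$-valued tensor $\d p_{y,t}\otimes \d p_{y,t}$ along pmGH convergence. In the compact setting of \cite{AHPT}, spectral decomposition of the heat kernel made this essentially automatic; here the Gaussian estimates of \cite{jiang2014heat} together with the tail bounds of Lemma \ref{lem:volume} are the essential tools, and identifying the limit requires some care in matching module structures across the converging spaces.
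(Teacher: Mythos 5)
Your overall strategy (blow-ups along the pmGH convergence of rescaled spaces, the Gaussian/\ref{lem:volume} tail control, dominated convergence via the uniform $L^\infty$ bound \eqref{eq:Linfgt}) is the same as the paper's, but there is a gap in the ``pointwise step'' that the paper is careful to avoid.

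You claim that for $\meas$-a.e.\ $x\in\mathcal R_n^*$ the \emph{tensor} $t\meas(B_{\sqrt t}(x))g_t(x)$ converges to $c_n g(x)$, and you want to feed this a.e.\ convergence into dominated convergence. But $g_t$ is only an element of $L^0((T^*)^{\otimes 2}(\XX,\dist,\meas))$, so the value $g_t(x)$ is not a well-defined pointwise object. More to the point, the blow-up machinery --- pmGH stability of heat kernels plus $L^2_{\loc}$ convergence of their differentials along the rescaled spaces --- does \emph{not} output a.e.\ pointwise convergence of the tensor field; it outputs convergence of \emph{averaged} quantities, such as
\[
\fint_{B_{\sqrt t}(z)}\big|t\meas(B_{\sqrt t}(\cdot))g_t\big|^2_{\sf HS}\,\d\meas
\qquad\text{or}\qquad
\int_\XX t\meas(B_{\sqrt t}(x))|\d p_{y,t}(V)|^2(x)\,\d\meas(x),
\]
each of which is a scalar function of the base point. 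Averaged convergence over balls of radius $\sqrt t$ shrinking as $t\to 0^+$ does not imply a.e.\ pointwise convergence (the ball and the parameter shrink simultaneously, so no differentiation theorem applies), and the claimed ``scale invariance of $g_t(x)$'' cannot be made rigorous without first fixing a Borel representative and justifying convergence at it.

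The paper resolves this in two steps that you would need to insert in place of your pointwise claim. First, testing against bounded vector fields $V$ and using the blow-up at a.e.\ $y$ for the quantity $\int_\XX t\meas(B_{\sqrt t}(x))|\d p_{y,t}(V)|^2\,\d\meas(x)\to c_n|V|^2(y)$ gives $L^p$-\emph{weak} convergence of $t\meas(B_{\sqrt t}(\cdot))g_t$ to $c_n g$ on bounded sets. Second, a further blow-up gives, for a.e.\ $z\in\mathcal R_n$, the averaged limit $F(z,t)=\fint_{B_{\sqrt t}(z)}|t\meas(B_{\sqrt t}(\cdot))g_t|^2_{\sf HS}\,\d\meas\to c_n^2 n$; a Fubini argument then turns this into convergence of the $L^2$ norm against test functions, and weak convergence plus norm convergence (in the Hilbert space $L^2$) yields strong $L^2_{\loc}$ convergence, which \eqref{eq:Linfgt} upgrades to $L^p_{\loc}$ for $p<\infty$. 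Your dominated-convergence shortcut would be correct \emph{if} a.e.\ pointwise convergence were available, but establishing it is not justified by the tools you cite; restructuring the argument along the weak-plus-norm route above is the way to close the gap.
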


\begin{proof}
Since the proof is essentially same to that in \cite[Theorem 5.10]{AHPT} after replacing $L^p$ by $L^p_{{loc}}$ (recall that \cite[Theorem 5.10]{AHPT} discussed only on the case when $(\XX, \dist)$ is compact), we shall only give a sketch of the proof.

Fix $V\in L^{\infty}(T(\XX, \dist, \meas))$ with bounded support.
First let us discuss the asymptotic behaviour of the following as $t \to 0^+$ for fixed $y \in \XX$ and $L>0$;
\begin{equation}\label{eq:separate}
\begin{split}
	&\int_\XX t\meas (B_{\sqrt{t}}(x))|\d p_{y, t}(V)|^2(x) \di \meas(x)  \\
&=\int_{B_{L\sqrt{t}}(y)}t\meas (B_{\sqrt{t}}(\cdot))|\d p_{y, t}(V)|^2\di \meas + \int_{\XX\setminus B_{L\sqrt{t}}(y)}t\meas (B_{\sqrt{t}}(\cdot))|\d p_{y, t}(V)|^2\di \meas.
\end{split}
\end{equation}
The key idea to control the each terms in the RHS of (\ref{eq:separate}) is to apply \textit{blow-up arguments} (i.e.\ we discuss the behaviour of the rescaled spaces $(\XX, \sqrt{t}^{-1}\dist, \meas(B_{\sqrt{t}}(y))^{-1}\meas, y)$ with respect to the pointed measured Gromov-Hausdorff convergence as $t \to 0^+$) in conjunction with the stability of the heat flow first observed in  \cite{Gigli10}. More precisely, we use the stability results proved in \cite[Corollary 5.5, Theorem 5.7, Lemma 5.8]{ambrosio2017local}, \cite[Theorem 4.4]{AH16}, \cite[Theorem 3.3]{AHT17} (with \cite[Theorem 2.19]{AHPT}), \cite[Theorem 6.8]{GMS15} with Theorem \ref{thm:RN} and (\ref{eq:equi lip}). Combining these, letting  $t \to 0^+$ and then letting $L \to \infty$ in the RHS of (\ref{eq:separate}), the following hold for $\meas$-a.e.\ $y \in \XX$:
\begin{enumerate}
\item The first term of the RHS of (\ref{eq:separate}) converges to $c_n|V|^2(y)$.
\item The second term of the RHS of (\ref{eq:separate}) converges to $0$.
\end{enumerate}
Thus as $t \to 0^+$ we obtain
\[
\int_\XX t\meas (B_{\sqrt{t}}(x))|\d p_{y, t}(V)|^2(x)\di \meas(x) \to c_n|V|^2(y)\qquad \text{$\meas$-a.e.\ $y \in \XX$.}
\]
Thus combining this with (\ref{eq:Linfgt}) and the dominated convergence theorem we get
\[
\int_{\XX}t\meas (B_{\sqrt{t}}( \cdot))g_t(V, V)\di \meas \to c_n\int_{\XX}|V|^2\di \meas
\]
which proves that $t\meas(B_{\sqrt{t}}(\,\cdot\,))g_t$ $L^p$-weakly converge to $c_n g$ on any bounded subset $A$ of $\XX$ because $g_t$ is symmetric and $V$ is arbitrary.

In order to get the $L^p_{loc}$-strong convergence it suffices to check
\begin{equation}\label{eq:limit4}
\lim_{t \to 0^+}\int_\XX \phi |t\meas (B_{\sqrt{t}}( \cdot))g_t|_{\mathrm{HS}}^2\di \meas =c_n^2\int_\XX \phi |g|_{\mathrm{HS}}^2\di \meas=c_n^2n\int_\XX\,\phi\d\mm
\end{equation}
for every $\phi\in \Lip_{bs}(\X, \dist)$, because this implies the $L^2_{loc}$-strong convergence and the improvement to the $L^p_{loc}$-strong one comes from (\ref{eq:Linfgt}). 
Let us check (\ref{eq:limit4}) as follows.

For any $z \in \mathcal{R}_n$, applying blow-up arguments as explained above again allows us to deduce
\[
F(z, t):=\frac{1}{\meas (B_{\sqrt{t}}(z))}\int_{B_{\sqrt{t}}(z)}|t\meas (B_{\sqrt{t}}( \cdot))g_t|_{\mathrm{HS}}^2\di \meas \to c_n^2n
\]
and thus (recalling \eqref{eq:Linfgt} to use the dominate convergence theorem) for $\phi\in \Lip_{bs}(\X,\dist)$ we have
\begin{equation}
\label{eq:st1}
\lim_{t\to 0^+}\int_\XX\phi (z)F(z,t)\,\d\mm(z) = c_n^2n\int_\XX\phi\,\d\mm.
\end{equation}
On the other hand,  we have
\begin{equation}
\label{eq:st2}
\int_\XX\phi (z)F(z,t)\,\d\mm(z)=\int_\XX |t\meas (B_{\sqrt{t}}( \cdot))g_t|_{\mathrm{HS}}^2(x)\underbrace{\int_{B_{\sqrt t}(x)}\frac{\phi(z)}{\mm(B_{\sqrt t}(z))}\d\mm(z)}_{=:G(x,t)}\d\mm(x).
\end{equation}
Now notice that $\sup_{t,x}G(x,t)<\infty$ (because of \eqref{eq:boundmeasballs}) and $\lim_{t\to 0^+}G(x,t)= \phi(x)$ for $\mm$-a.e.\ $x$ (because of the convergence of the blow-ups to the Euclidean space). It follows (again using  \eqref{eq:Linfgt} to use the dominate convergence theorem) that
\[
\lim_{t\to 0^+}\int_\XX  |t\meas (B_{\sqrt{t}}(\cdot))g_t|_{\mathrm{HS}}^2(x)\Big|\int_{B_{\sqrt t}(x)}\frac{\phi(z)}{\mm(B_{\sqrt t}(z))}\d\mm(z)-\phi(x)\Big|\d\mm(x)=0
\]
which together with \eqref{eq:st1} and \eqref{eq:st2} gives \eqref{eq:limit4} and the conclusion.
\end{proof}

\begin{remark}\label{rem:singularity}
In Theorem \ref{thm:pullbackconvergence}, the conclusion can not be improved to the case when $p=\infty$ in general.
For example, the  $\RCD(0, 1)$ space $([0, \pi], \dist_{\mathbb{R}}, \haus^1)$ satisfies
\[
\liminf_{t \to 0^+}\left\|t\haus^1(B_{\sqrt{t}}(\cdot))g_t-c_1g_{\mathbb{R}}\right\|_{L^{\infty}}>0.
\]
See \cite[Remark 5.11]{AHPT} for details. It is worth pointing out that the verification of 
\begin{equation}\label{eq:loclinfty}
\|t\meas (B_{\sqrt{t}}(\cdot))g_t -c_n g\|_{L^{\infty}_{{loc}}} \to 0
\end{equation}
is closely related to the nonexistence of singular points (actually the singular points are $\{0, \pi\}$ in this example). See also \cite[Theorem 1.1]{honda2021sobolev}.

In connection with this pointing out, if $(M, g, e^{-V}\dd \mathrm{Vol}_g)$ is any weighted complete $n$-dimensional Riemannian manifold with ${\rm Ric}_N\geq Kg$ for some $K \in \mathbb{R}$ and some $N \in [n, \infty)$, applying a construction of the heat kernel by \textit{parametrix}, we can actually prove that (\ref{eq:loclinfty}) holds. 
More precisely we have as $t \to 0^+$.
\begin{equation}\notag
\begin{split}
	&4(8\pi)^{n/2}t^{(n+2)/2}g_{t}\\&=e^Vg-e^V\left(\frac{2}{3}\left(\mathrm{Ric}_g-\frac{1}{2}\mathrm{Scal}_g g\right)-\di V \otimes\di V -\Delta^gVg+\frac{|\nabla^g V|^2}{2}g\right)t +O(t^2),
\end{split}
\end{equation}
which is uniform on any bounded set.
See \cite[Theorem 5]{Berard:1994vy} and \cite[Theorem 3.5]{honda2020characterization} for  details. 
\fr\end{remark}
\begin{corollary}\label{cor:convgt} Let $(\X,\sfd,\mm)$ be an $\RCD(K,N)$ space of essential dimension $n$.  Let $A$ be a bounded Borel subset of $X$ with
\begin{equation}\label{eq:lowerbdRN}
\inf_{r \in (0, 1), x \in A}\frac{\meas (B_r(x))}{r^n}>0.
\end{equation}
Then  $\haus^n\mres A$ is a Radon measure absolutely continuous w.r.t.\ $\mm$ and
\[
\chi_A\omega_nt^{(n+2)/2}g_t \to\chi_A c_n\frac{\di \haus^n\res A}{\di \meas} g\qquad \text{in $L^p((T^*)^{\otimes 2}(\XX, \dist, \meas))$, $\forall p \in [1, \infty)$.}
\]
\end{corollary}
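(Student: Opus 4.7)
My plan is to factor the desired convergence through Theorem \ref{thm:pullbackconvergence} by writing
\[
\chi_A \omega_n t^{(n+2)/2} g_t \;=\; \phi_t \cdot T_t, \qquad \phi_t := \chi_A \frac{\omega_n t^{n/2}}{\meas(B_{\sqrt t}(\cdot))}, \quad T_t := \chi_A\, t \meas(B_{\sqrt t}(\cdot)) g_t,
\]
and showing $\phi_t \to \chi_A \, \di\haus^n\mres A/\di\meas$ in every $L^q(\mm)$, $q<\infty$, while $T_t \to \chi_A c_n g$ in every $L^p((T^*)^{\otimes 2}(\X,\sfd,\mm))$.

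The first step handles the absolute continuity: the uniform bound \eqref{eq:lowerbdRN} immediately implies the hypothesis of Lemma \ref{le:densh} with $\alpha=n$, so $\haus^n\mres A$ is a Radon measure absolutely continuous w.r.t.\ $\mm$. Next I would observe three facts about $\phi_t$. First, denoting by $c_0>0$ the infimum in \eqref{eq:lowerbdRN}, for every $t\in(0,1]$ we have $0 \le \phi_t \le \omega_n/c_0$ on $A$. Second, by Theorem \ref{thm:RN} the set $\mathcal R_n^*$ has full $\mm$-measure, so for $\mm$-a.e.\ $x\in A$ the limit $\theta(x)=\lim_{r\to 0^+}\meas(B_r(x))/(\omega_n r^n) \in (0,\infty)$ exists and represents $\di\meas/\di\haus^n$ there; consequently $\phi_t(x)\to \chi_A(x)/\theta(x)=\chi_A(x)\,\di\haus^n\mres A/\di\meas(x)$ pointwise $\mm$-a.e. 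Third, the limit is also bounded by $\omega_n/c_0$ on $A$. Since $A$ is bounded, $\meas(A)<\infty$, and the dominated convergence theorem yields $\phi_t \to \chi_A\,\di\haus^n\mres A/\di\meas$ in $L^q(\mm)$ for every $q\in[1,\infty)$.

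For the tensor factor, since $(\XX,\dist)$ is proper, $\overline A$ is compact, and the $L^p_{\loc}$-convergence of Theorem \ref{thm:pullbackconvergence} restricted to $A$ gives $T_t \to \chi_A c_n g$ in $L^p((T^*)^{\otimes 2}(\X,\sfd,\mm))$ for every $p\in[1,\infty)$. Finally I combine these via
\[
\phi_t T_t - \chi_A c_n \tfrac{\di\haus^n\mres A}{\di\meas} g \;=\; \phi_t(T_t-\chi_A c_n g) \;+\; c_n g \Bigl(\phi_t-\chi_A\tfrac{\di\haus^n\mres A}{\di\meas}\Bigr).
\]
The $L^p$-norm of the first term is at most $(\omega_n/c_0)\,\|T_t-\chi_A c_n g\|_{L^p}\to 0$, and since $|g|_{\sf HS}=\sqrt n$ is constant (Proposition \ref{prop:riemannianmetric}), the second term has $L^p$-norm at most $c_n\sqrt n\,\|\phi_t-\chi_A \di\haus^n\mres A/\di\meas\|_{L^p(\mm)}\to 0$. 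This gives the claim for every $p\in[1,\infty)$.

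I do not anticipate any significant obstacle: the content is the bookkeeping that glues Theorem \ref{thm:pullbackconvergence} with the density statement of Theorem \ref{thm:RN}. The only mildly delicate point is ensuring that the extra factor $\phi_t$ is controlled both uniformly (for the first term above) and in $L^p$ (for the second), which is precisely what hypothesis \eqref{eq:lowerbdRN} together with the existence of the density $\theta$ $\mm$-a.e.\ provides.
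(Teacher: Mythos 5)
Your proof is correct and takes essentially the same route as the paper: absolute continuity via Lemma \ref{le:densh}, pointwise convergence of $\omega_n r^n/\meas(B_r(\cdot))$ to the density via Theorem \ref{thm:RN}, uniform boundedness via \eqref{eq:lowerbdRN}, dominated convergence, and Theorem \ref{thm:pullbackconvergence}. The paper compresses the final bookkeeping into one sentence, and you simply make the factorization $\chi_A\omega_nt^{(n+2)/2}g_t=\phi_t T_t$ and the triangle-inequality split explicit; the content is the same.
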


\begin{proof} The first part of the claim follows from Lemma \ref{le:densh} and \eqref{eq:lowerbdRN}. Then Theorem \ref{thm:RN} ensures that  as $r \to 0^+$
\[
\frac{\omega r^n}{\meas (B_r(x))} \to \frac{\di \haus^n\res A}{\di \meas}\qquad \text{ $\meas$-a.e.\ $x \in A$.}
\]
Thus \eqref{eq:lowerbdRN}, the dominated convergence theorem and Theorem \ref{thm:pullbackconvergence} give the conclusion.
\end{proof}

We now turn to the asymptotic of $\Delta p_{2t}$:

\begin{proposition}\label{prop:convlap} Let $(\X,\sfd,\mm)$ be an $\RCD(K,N)$ space, $K\in\R$, $N\in[1,\infty)$. Then as $t \to 0^+$ we have
\[
t\meas(B_{\sqrt{t}}(\cdot ))\Delta p_{2t}( \cdot) \to 0\qquad \text{ in $L^p_{{loc}}(\XX, \meas)$, $\forall p\in [1, \infty)$.}
\]
\end{proposition}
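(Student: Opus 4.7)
I would combine a uniform $L^\infty$ bound with $\meas$-almost everywhere pointwise convergence to $0$. The bound $|t\meas(B_{\sqrt t}(x))\Delta p_{2t}(x)|\le C(K,N)$ for every $x\in\XX$ and $t\in(0,1]$ is immediate from \eqref{eq:laplacianbound}, so once the pointwise convergence is in hand, the dominated convergence theorem on bounded subsets of $\XX$ (where $\meas$ is finite) delivers the $L^p_{loc}$ claim for every $p<\infty$.

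\emph{Key identity.} From Lemma \ref{lem:diagonalformula} we have $\Delta p_{2t}(x)=2\int_\XX p(x,y,t)\Delta_x p(x,y,t)\,\d\meas(y) + 2\,\tra(g_t)(x)$ for $\meas$-a.e.\ $x$. For $\meas$-a.e.\ $x$ the first integrand equals $p(x,y,t)\Delta_y p(x,y,t)$ for $\meas$-a.e.\ $y$ by \eqref{eq:scambioLap}, and integration by parts in $y$ (legitimate since $p_{x,t}\in\mathrm{TestF}(\XX,\dist,\meas)\subset D(\Delta)$ and has fast Gaussian decay) rewrites the first term as $-2\|\nabla p_{x,t}\|_{L^2(\meas)}^{2}$. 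Hence, for $\meas$-a.e.\ $x$,
\[
\Delta p_{2t}(x) \;=\; -2\,\|\nabla p_{x,t}\|_{L^2(\meas)}^{2} + 2\,\tra(g_t)(x).
\]
Since the trace is a bounded $L^0$-linear map with $\tra(g)=n$, Theorem \ref{thm:pullbackconvergence} yields $t\meas(B_{\sqrt t}(\cdot))\tra(g_t)\to c_n n$ in $L^p_{loc}$; the remaining task is to prove the analogous convergence
\[
t\meas(B_{\sqrt t}(\cdot))\,\|\nabla p_{\cdot,t}\|_{L^2(\meas)}^{2}\;\to\; c_n n \quad\text{in } L^p_{loc}.
\]

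\emph{Blow-up for the gradient term.} Fix $x_0\in\mathcal{R}_n^*$ and consider $\XX^t:=(\XX,t^{-1/2}\dist,\meas(B_{\sqrt t}(x_0))^{-1}\meas,x_0)$, which pmGH-converges to $(\mathbb{R}^n,\dist_{\mathbb{R}^n},\omega_n^{-1}\haus^n,0_n)$. The rescaled heat kernel reads $p^t(x,y,s)=\meas(B_{\sqrt t}(x_0))p(x,y,ts)$, and a direct computation gives the clean scaling identity
\[
\|\nabla p^t_{x_0,1}\|_{L^2(\meas^t)}^{2} \;=\; t\meas(B_{\sqrt t}(x_0))\,\|\nabla p_{x_0,t}\|_{L^2(\meas)}^{2}.
\]
By the heat-flow stability under pmGH convergence invoked in the proof of Theorem \ref{thm:pullbackconvergence}, the functions $p^t_{x_0,1}$ converge to the Euclidean heat kernel $p^{\mathbb{R}^n}_{0,1}$ strongly in $H^{1,2}$, so their gradient $L^2$-norms converge. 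An explicit computation on $(\mathbb{R}^n,\omega_n^{-1}\haus^n)$ gives $\|\nabla p^{\mathbb{R}^n}_{0,1}\|_{L^2}^{2}=c_n n$, with $c_n=\omega_n/(4(8\pi)^{n/2})$ as in Theorem \ref{thm:pullbackconvergence}. Combining, $t\meas(B_{\sqrt t}(\cdot))\Delta p_{2t}\to -2c_n n + 2c_n n = 0$ pointwise for $\meas$-a.e.\ $x$, and hence in $L^p_{loc}$ for every $p<\infty$ thanks to the uniform bound.

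\emph{Main obstacle.} The sensitive step is the strong $H^{1,2}$-convergence $p^t_{x_0,1}\to p^{\mathbb{R}^n}_{0,1}$ along a sequence of varying ambient spaces; while $L^2$-convergence of heat kernels under pmGH is standard, upgrading it to gradient convergence requires the Mosco-convergence framework applied to a sequence of functions living on different spaces. A perhaps cleaner alternative is to mimic the splitting scheme of Theorem \ref{thm:pullbackconvergence} directly for $\int|\nabla p_{x_0,t}|^2\d\meas$: decompose the integral over $B_{L\sqrt t}(x_0)$ and its complement, use \eqref{eq:equi lip} together with Lemma \ref{lem:volume} to control each piece via Gaussian bounds, and identify the limit as the corresponding Euclidean integral; this side-steps any differentiation in time or any passage of derivatives through pmGH limits.
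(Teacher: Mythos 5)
Your route is correct but genuinely different from the paper's. The paper works with $\Delta p_{2t}$ directly: for each $z\in\mathcal R_n$ it shows that the local average $\frac{1}{\meas(B_{\sqrt t}(z))}\int_{B_{\sqrt t}(z)}t\meas(B_{\sqrt t}(x))|\Delta p_{2t}(x)|\,\d\meas(x)$ tends to $0$, because the rescaled Laplacian $\Delta^{t,z}p^{t,z}_2$ converges $L^2_{\rm loc}$-strongly (via the pmGH-stability toolbox) to $\Delta^{\mathbb R^n}(\omega_n\tilde p_2)=0$, the point being that the diagonal Euclidean kernel $\tilde p_2$ is constant; then the local averages are glued to the pointwise statement using \eqref{eq:boundmeasballs} and dominated convergence. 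You instead use \eqref{eq:scambioLap} plus integration by parts in the second variable to obtain the identity $\Delta p_{2t}(x)=2\tra(g_t)(x)-2\|\nabla p_{x,t}\|_{L^2(\meas)}^2$, then recycle Theorem \ref{thm:pullbackconvergence} for the trace term and run a separate blow-up for the global gradient $L^2$-norm, with the exact cancellation $c_n n-c_n n=0$ forced by the radial symmetry of the Euclidean kernel. This is a nice reformulation (it isolates explicitly the cancellation that the paper packages into ``$\tilde p_2$ is constant''), and the identity is an attractive by-product; but it requires controlling convergence of a \emph{global} $L^2$-norm of gradients over varying ambient spaces, which is a bit more than the $L^2_{\rm loc}$-strong convergence the paper uses, and it demands exactly the tightness argument you flag (Gaussian decay via \eqref{eq:equi lip} plus Lemma \ref{lem:volume}). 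Both routes ultimately rely on the same pmGH stability machinery and the same Euclidean symmetry; the paper avoids the need for an exact constant match by working with $|\Delta p_{2t}|$ and vanishing Euclidean limit, while your scheme trades that for a pointwise a.e.\ plus uniform-bound argument that sidesteps the Poincar\'e-style gluing step in the paper's last display.
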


\begin{proof}
The proof is based on blow-up arguments which is similar to that of Theorem \ref{thm:pullbackconvergence}. Therefore we give only a sketch of the proof (see also \cite{AHPT}).

Let us first prove that for any $z \in \mathcal{R}_n$, as $t \to 0^+$,
\begin{equation}\label{eq:aibsh}
\frac{1}{\meas (B_{\sqrt{t}}(z))}\int_{B_{\sqrt{t}}(z)}t\meas(B_{\sqrt{t}}(x))|\Delta p_{2t}(x)|\di \meas(x) \to 0.  
\end{equation}
In order to prove this, consider the pointed measured Gromov-Hausdorff convergent sequence of the rescaled space:
\begin{equation}\label{eq:rescaled}
(\XX^{t, z}, \dist^{t, z}, \meas^{t, z}, z):=\left(\XX, \frac{1}{\sqrt{t}}\dist, \frac{1}{\meas(B_{\sqrt{t}}(z))}\meas, z\right) \stackrel{\mathrm{pmGH}}{\to} \left(\mathbb{R}^n,  \dist_{\mathbb{R}^n}, \frac{1}{\omega_n}\haus^n, 0_n\right)
\end{equation}
and denote by $p^{t, z}$, $\Delta^{t, z}$, the heat kernel, the Laplacian of $(\XX^{t, z}, \dist^{t, z}, \meas^{t, z})$, respectively, namely $p^{t, z}(x, y, s)=\meas(B_{\sqrt{t}}(z))p(x, y, ts)$, $\Delta^{t, z}f=t\Delta f$. Thus the LHS of  (\ref{eq:aibsh}) is equal to
\begin{equation}\label{eq:aosisjj}
\int_{B^{\dist^{t, z}}_1(z)}\meas^{t, z}(B_1^{\dist^{t, z}}(x))|\Delta^{t, z}p_2^{t, z}(x)|\di \meas^{t, z}(x).
\end{equation}
Applying the stability results already used in the proof of Theorem \ref{thm:pullbackconvergence} shows that $\Delta^{t, z}p_2^{t, z}$ $L^2_{{loc}}$-strongly converge to $\Delta^{g_{\mathbb{R}^n}}(\omega_n\tilde{p}_2)$ with respect to (\ref{eq:rescaled}), where $\tilde{p}(x)$ denotes the heat kernel of the $n$-dimensional Euclidean space evaluated at $(x,x)$. Since $\Delta^{g_{\mathbb{R}^n}}(\omega_n\tilde{p}_2)=0$ because $\tilde{p}_2$ is constant, (\ref{eq:aosisjj}) converges to 
\begin{equation}\notag
\int_{B_1(0_n)}|\Delta^{g_{\mathbb{R}^n}}(\omega_n\tilde{p}_2)|\di\left( \frac{1}{\omega_n}\haus^n\right)=0
\end{equation}
as $t \to 0^+$, which proves (\ref{eq:aibsh}).

Fix a $\phi\in \Lipbs(\XX,\dist)$. Applying (\ref{eq:aibsh}) with (\ref{eq:laplacianbound}), the dominated convergence theorem yields
\begin{equation}\notag
\int_{\XX}\frac{\phi(z)}{\meas (B_{\sqrt{t}}(z))}\int_{B_{\sqrt{t}}(z)}t\meas(B_{\sqrt{t}}(x))|\Delta p_{2t}(x)|\di \meas(x) \di \meas (z) \to 0.
\end{equation}
On the other hand (\ref{eq:laplacianbound}) and dominated convergence (recall \eqref{eq:boundmeasballs}) imply \begin{align}
&\left| \int_{\XX}\frac{\phi(z)}{\meas (B_{\sqrt{t}}(z))}\int_{B_{\sqrt{t}}(z)}t\meas(B_{\sqrt{t}}(x))|\Delta p_{2t}(x)|\di \meas(x) \di \meas (z)- \int_{\XX}\phi(z)t\meas(B_{\sqrt{t}}(z))|\Delta p_{2t}(z)|\di \meas(z)\right| \nonumber \\
&\le C(K, N) \int_\XX \Big| \phi(z)-\int_{B_{\sqrt{t}}(z)}\frac{\phi(x)}{\meas (B_{\sqrt{t}}(x))}\di \meas(x) \Big|\di \meas (z) \to 0.\notag
\end{align}
Thus
\begin{equation}\label{apsosirn}
\int_{\XX}\phi(x)t\meas (B_{\sqrt{t}}(x))|\Delta p_{2t}(x)|\di \meas (x) \to 0.
\end{equation}
The desired $L^p_{{loc}}$-strong convergence comes from (\ref{apsosirn}) and (\ref{eq:laplacianbound}).
\end{proof}
\begin{corollary}\label{cor:hhhss} Let $(\X,\sfd,\mm)$ be an $\RCD(K,N)$ space. Also, let $A$ be a bounded Borel subset of $X$ and $n\in\N$ be such that
\[
        \inf_{r \in (0, 1), x \in A}\frac{\meas(B_r(x))}{r^n}>0.
\]
    Then as $t \to 0^+$
\begin{equation}\notag
    t^{(n+2)/2} \Delta p_{2t}\to 0 \qquad \text{in $L^2(A,\mm)$}.
\end{equation}
\end{corollary}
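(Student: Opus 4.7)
The strategy is to deduce this corollary directly from Proposition \ref{prop:convlap} by using the uniform lower volume bound on $A$ to convert the rescaling factor $t\meas(B_{\sqrt t}(\cdot))$ into $t^{(n+2)/2}$ up to a multiplicative constant.

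First, I would rewrite the hypothesis: there exists $c>0$ such that $\meas(B_r(x))\geq c r^n$ for every $x\in A$ and $r\in(0,1)$. In particular, for $t\in(0,1)$ and $x\in A$,
\[
    t^{(n+2)/2}=t\cdot t^{n/2}\leq \frac{1}{c}\,t\,\meas(B_{\sqrt t}(x)).
\]
Consequently, on $A\times(0,1)$ we have the pointwise bound
\[
    \bigl|t^{(n+2)/2}\Delta p_{2t}(x)\bigr|\leq \frac{1}{c}\,\bigl|t\,\meas(B_{\sqrt t}(x))\,\Delta p_{2t}(x)\bigr|.
\]

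Next, I would invoke Proposition \ref{prop:convlap} with $p=2$: as $t\to 0^+$, $t\meas(B_{\sqrt t}(\cdot))\Delta p_{2t}\to 0$ in $L^2_{loc}(\XX,\meas)$. Since $A$ is bounded and $(\XX,\dist)$ is proper (by the Bishop-Gromov inequality), $\overline{A}$ is compact, so this $L^2_{loc}$-convergence yields $L^2$-convergence on $A$. Combining with the pointwise comparison above,
\[
    \int_A \bigl|t^{(n+2)/2}\Delta p_{2t}\bigr|^2\,\d\meas \leq \frac{1}{c^2}\int_A \bigl|t\,\meas(B_{\sqrt t}(\cdot))\,\Delta p_{2t}\bigr|^2\,\d\meas \xrightarrow[t\to 0^+]{} 0,
\]
which is the desired conclusion.

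There is essentially no obstacle here: the whole argument is a short computation leveraging the assumption \eqref{main:uniformbound}-type lower bound to replace the a priori non-uniform factor $\meas(B_{\sqrt t}(\cdot))$ by the uniform quantity $t^{n/2}$, after which Proposition \ref{prop:convlap} directly provides the required convergence. All the nontrivial work (blow-up analysis, Gaussian bounds, stability of the heat flow) has already been absorbed into that proposition.
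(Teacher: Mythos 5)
Your argument is correct and is precisely what the paper means by calling the corollary a ``direct consequence'' of Proposition \ref{prop:convlap}: the uniform lower volume bound gives $t\meas(B_{\sqrt t}(x))\ge c\,t^{(n+2)/2}$ for $x\in A$ and $t\in(0,1)$, after which the $L^2_{loc}$-convergence from the proposition transfers immediately to the bounded set $A$. No gap; same approach.
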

\begin{proof}
Direct consequence of Proposition \ref{prop:convlap}.
\end{proof}
\begin{remark}
	Although the above convergence results are stated for the strong convergence in order to get our best knowledges, their weak convergences are enough to justify our main results as easily seen in the next section.
	\fr
\end{remark}

\section{Proof of the main results}
From both the technical and conceptual points of view, the following is the crucial result in this paper. Its proof is basically a combination of the convergence results established in Corollaries \ref{cor:convgt}, \ref{cor:hhhss} together with formula \eqref{eq:KeyDiff2}:

\begin{theorem}[Integration-by-parts formula]\label{intbyparts} Let $(\X,\sfd,\mm)$ be an $\RCD(K,N)$ space of essential dimension $n$. Let also $U\subset\X$ be open and assume that
\[
        \inf_{r \in (0, 1),x \in A}\frac{\meas(B_r(x))}{r^n}>0
\]
for every compact subset $A$ of $U$.  Then for any $\phi\in \Lip_{bs}(\X,\dist)$ with $\supp(\phi)\subset U$ and $f\in D(\Delta)$, it holds that 
\begin{equation}\notag
    \int_\XX \langle \dd\phi,\dd f\rangle\dd\haus^n =-\int_\XX \phi\tr(\hess f)\dd\haus^n.
\end{equation}
\end{theorem}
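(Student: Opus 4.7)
Fix $\phi\in\Lip_{bs}(\XX,\dist)$ with $\supp(\phi)\subset U$ and $f\in D(\Delta)$, and set $A\defeq\supp(\phi)$, a compact subset of $U$. The hypothesis ensures $\inf_{r\in(0,1),x\in A}\meas(B_r(x))/r^n>0$, so Lemma \ref{le:densh} grants that $\haus^n\res A$ is a Radon measure absolutely continuous with respect to $\meas$; let $h\defeq\dd\haus^n\res A/\dd\meas$ denote its density. The strategy is to feed the identity $\nabla^{*}g_t=-\tfrac{1}{4}\dd\Delta p_{2t}$ of Theorem \ref{thm:DiffKeyFormula2} into the defining pairing of $\nabla^{*}$ against the test tensor $\dd\phi\otimes\dd f+\phi\hess f$, then multiply by the normalization $\omega_n t^{(n+2)/2}$ and pass to the limit $t\to 0^+$ using Corollaries \ref{cor:convgt} and \ref{cor:hhhss}.

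Carrying this out, since $g_t\in D_{loc}(\nabla^{*})$ and the test tensor $\dd\phi\otimes\dd f+\phi\hess f$ has support in $A\subset U$, the defining formula for $\nabla^{*}$ (read locally on an open bounded neighbourhood of $A$) together with the Leibniz identity $\phi\,\dd\Delta p_{2t}=\dd(\phi\Delta p_{2t})-\Delta p_{2t}\,\dd\phi$ and the fact that $\phi\Delta p_{2t}\in H^{1,2}(\XX,\dist,\meas)$ (which holds because $\phi\in\Lip_{bs}$ and $\Delta p_{2t}\in H^{1,2}_{loc}\cap L^{\infty}_{loc}$ by Lemma \ref{lem:diagonalformula}, so that $\phi\Delta p_{2t}$ is a legitimate $H^{1,2}$ test function for $f\in D(\Delta)$) should yield, after multiplication by $\omega_n t^{(n+2)/2}$,
\begin{equation}\label{eq:myproofkey}
\int_\XX\bigl\langle\omega_n t^{(n+2)/2}g_t,\dd\phi\otimes\dd f+\phi\hess f\bigr\rangle_{\sf HS}\dd\meas=\frac{\omega_n}{4}\int_\XX t^{(n+2)/2}\Delta p_{2t}\bigl(\phi\Delta f+\langle\dd\phi,\dd f\rangle\bigr)\dd\meas.
\end{equation}

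I would then send $t\to 0^{+}$ in \eqref{eq:myproofkey}. The test tensor $\dd\phi\otimes\dd f+\phi\hess f$ lies in $L^{2}((T^*)^{\otimes 2}(\XX,\dist,\meas))$ with support in $A$, and Corollary \ref{cor:convgt} supplies the $L^{2}$-convergence $\chi_A\omega_n t^{(n+2)/2}g_t\to c_n\chi_A h g$; combined with the pointwise identities $\langle g,\dd\phi\otimes\dd f\rangle_{\sf HS}=\langle\dd\phi,\dd f\rangle$ and $\langle g,\phi\hess f\rangle_{\sf HS}=\phi\tr(\hess f)$ (direct consequences of $g$ being the pointwise Riemannian metric of Proposition \ref{prop:riemannianmetric}), the left-hand side tends to $c_n\int_\XX(\langle\dd\phi,\dd f\rangle+\phi\tr(\hess f))\dd\haus^n$. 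On the right-hand side, the factor $\phi\Delta f+\langle\dd\phi,\dd f\rangle$ is in $L^{2}$ with support in $A$, while Corollary \ref{cor:hhhss} says $t^{(n+2)/2}\Delta p_{2t}\to 0$ in $L^{2}(A,\meas)$, so Cauchy-Schwarz forces the right-hand side to vanish. Dividing by $c_n>0$ delivers the claimed identity.

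The main technical point I expect to require care is \eqref{eq:myproofkey}: since $g_t$ is not globally in $L^{2}$, the defining formula of $\nabla^{*}$ must be applied after replacing $g_t$ by an $L^{2}$ extension agreeing with it on an open bounded neighbourhood of $A$, exploiting that both sides of the pairing only see the tensor on that neighbourhood (by the supports of $\dd\phi\otimes\dd f+\phi\hess f$ and of $\phi\,\dd f$). In parallel, the fact that the definition of $D(\Delta)$ in Definition \ref{def:rcdkn} allows arbitrary $H^{1,2}$ test functions -- not merely $\Lip_{bs}$ ones -- is what licenses using $\phi\Delta p_{2t}$ as test against $f$ in the integration by parts producing \eqref{eq:myproofkey}.
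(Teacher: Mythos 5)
Your argument is correct and follows essentially the same route as the paper: pair the identity $\nabla^* g_t = -\tfrac{1}{4}\,\dd\Delta p_{2t}$ against the test tensor $\dd\phi\otimes\dd f+\phi\hess f$, normalize by $\omega_n t^{(n+2)/2}$, and pass to the limit via Corollaries \ref{cor:convgt} and \ref{cor:hhhss}. The only cosmetic difference is that the paper compresses your Leibniz-plus-integration-by-parts manipulation into the single step $-\tfrac{1}{4}\int_\XX\langle\nabla\Delta p_{2t},\phi\nabla f\rangle\dd\meas=\tfrac{1}{4}\int_\XX\Delta p_{2t}\,\mathrm{div}(\phi\nabla f)\dd\meas$, which coincides with yours since $\mathrm{div}(\phi\nabla f)=\phi\Delta f+\langle\dd\phi,\dd f\rangle$.
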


\begin{proof} The assumptions on $\phi,f$ ensure that $\phi\dd f$ is in the domain of the covariant derivative with $\nabla(\phi\dd f)=\dd\phi\otimes\dd f+\phi\hess f$ (see \cite[Theorem  3.4.2, Proposition 3.4.5]{Gigli14}), with identifications under the Riesz isomorphisms.  Thus \eqref{eq:KeyDiff2} gives
\begin{equation}\label{eq:KeyInt}
\begin{split}
	    \int_\XX\langle t^{(n+2)/2}g_t, \nabla(\phi\dd f)\rangle_{\sf HS}\,\dd\meas&=-\frac{1}{4}\int_\XX\langle \nabla \Delta (t^{(n+2)/2}p_{2t}), \phi \nabla f \rangle\dd\meas  \\
&=\frac{1}{4}\int_\XX\Delta (t^{(n+2)/2}p_{2t}) \mathrm{div}(\phi \nabla f) \dd\meas.
\end{split}
\end{equation}
Let us take the limit $t \to 0^+$ in (\ref{eq:KeyInt}). The RHS converge to $0$ because of Corollary \ref{cor:hhhss} applied with $A:=\supp(\phi)$.  On the other hand by Corollary \ref{cor:convgt} applied with $A:=\supp(\phi)$, the LHS of   (\ref{eq:KeyInt}) converges to, up to multiplying by a constant, 
\begin{equation}\notag
\int_\XX\langle g,\nabla(\phi \dd f)\rangle_{\sf HS} \,\dd\haus^n=\int_\XX\langle \dd\phi, \dd f \rangle \dd\haus^n+\int_\XX\phi \tr(\hess f)\dd\haus^n.
\end{equation}
This completes the proof.\end{proof}

To deduce from the above the equivalence of the `weak' and `strong' non-collapsed conditions we shall use the following simple result:

\begin{lemma}\label{prop:main} Let $(\X,\sfd,\mm)$ be an $\RCD(K,N)$ space. Also, let $U\subseteq\XX$ be an open connected set and let $\xi \in L^\infty_\loc(U, \meas)$. Assume that for every $\psi\in\Lip_{bs}(\X, \dist)$ with support in $U$ and $f\in D(\Delta)$ it holds 
\begin{equation}\label{intparthip}
\int_\XX \xi  \langle \nabla \psi,\nabla f\rangle \dd{\mass} =- \int_\XX\xi  \psi \Delta f \dd\mm.
\end{equation}
Then $\xi$ is constant on $U$.
\end{lemma}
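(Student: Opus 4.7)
The plan is to extract local Sobolev regularity of $\xi$ on $U$ from the hypothesis, then to show that the corresponding weak gradient vanishes there, and finally to conclude by the Sobolev-to-Lipschitz property together with the connectedness of $U$. Heuristically, rewriting the hypothesis as $\int_\XX\xi\,\div(\psi\nabla f)\,\di\mm=0$ for admissible $\psi,f$ is a distributional form of the statement $\nabla\xi\equiv 0$ on $U$.

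First I would establish that $\psi\xi\in H^{1,2}(\XX,\dist,\mm)$ for every $\psi\in\Lipbs(\XX,\dist)$ with $\supp\psi\subset U$. Since $\xi\in L^\infty_\loc(U,\mm)$ we have $\psi\xi\in L^2\cap L^\infty$, and $u_t:=\heat_t(\psi\xi)\in D(\Delta)$ for every $t>0$. Self-adjointness of the semigroup and the identity $\heat_t\Delta\heat_t=\Delta\heat_{2t}$ give
\[
\|\nabla u_t\|_{L^2}^2=-\int_\XX u_t\,\Delta u_t\,\di\mm=-\int_\XX\psi\xi\,\Delta\heat_{2t}(\psi\xi)\,\di\mm,
\]
which by the hypothesis applied with $f=\heat_{2t}(\psi\xi)\in D(\Delta)$ equals $\int_\XX\xi\,\langle\nabla\psi,\nabla\heat_{2t}(\psi\xi)\rangle\,\di\mm$. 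Cauchy--Schwarz together with the monotonicity of $s\mapsto\|\nabla\heat_sg\|_{L^2}$ (a consequence of $\frac{\di}{\di s}\|\nabla\heat_sg\|_{L^2}^2=-2\|\Delta\heat_sg\|_{L^2}^2\le 0$ for $s>0$) yields the $t$-uniform bound $\|\nabla u_t\|_{L^2}\le\|\xi\nabla\psi\|_{L^2}$, and the $L^2$-lower semicontinuity of the Cheeger energy gives $\psi\xi\in H^{1,2}$. Feeding this regularity back into the hypothesis and integrating by parts on the $\psi\xi\,\Delta f$ side produces the identity
\[
\int_\XX\langle\nabla(\psi\xi)-\xi\nabla\psi,\nabla f\rangle\,\di\mm=0
\]
for every $f\in D(\Delta)$, hence by density of $D(\Delta)$ in $H^{1,2}$ for every $f\in H^{1,2}(\XX,\dist,\mm)$.

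Next I would decouple the local gradient of $\xi$ from the term $\xi\nabla\psi$ via a two-cut-off Leibniz trick. Choose $\psi_1,\psi_2\in\Lipbs(\XX,\dist)$ with $\supp\psi_i\subset U$, $\psi_1\ge 0$, and $\psi_2\equiv 1$ on $\supp\psi_1$; by the first step both $\psi_1\xi$ and $\psi_2\xi$ are in $H^{1,2}$. Since $\psi_1\xi=\psi_1\cdot(\psi_2\xi)$ and $\psi_2\equiv 1$ on $\supp\nabla\psi_1$, the Lipschitz-times-Sobolev Leibniz rule gives $\nabla(\psi_1\xi)=\psi_1\nabla(\psi_2\xi)+\xi\nabla\psi_1$. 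Plugging this into the identity above with $\psi=\psi_1$ cancels the $\xi\nabla\psi_1$ term and leaves
\[
\int_\XX\psi_1\,\langle\nabla(\psi_2\xi),\nabla f\rangle\,\di\mm=0\qquad\forall f\in H^{1,2}(\XX,\dist,\mm).
\]
Specialising $f=\psi_2\xi$ and using $\psi_1\ge 0$ produces $|\nabla(\psi_2\xi)|=0$ $\mm$-a.e.\ on $\{\psi_1>0\}$; varying $\psi_1$ with support inside $\{\psi_2=1\}^\circ$ gives $|\nabla(\psi_2\xi)|=0$ $\mm$-a.e.\ on $\{\psi_2=1\}^\circ$.

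To conclude, given any ball $B$ with $\overline B\subset U$, pick $\psi_2$ identically $1$ on an open neighbourhood $W$ of $\overline B$ and with $\supp\psi_2\subset U$. Then $\psi_2\xi\in H^{1,2}$ has vanishing weak gradient on $W$, and the Poincar\'e inequality \eqref{eq:poincare} applied on small balls inside $W$ forces $\psi_2\xi$ to be $\mm$-a.e.\ constant on each such ball, and therefore on $B$ since balls are connected in the geodesic ambient space; since $\psi_2\equiv 1$ on $W\supset B$ this gives that $\xi$ itself is $\mm$-a.e.\ constant on $B$. A standard chain-of-balls argument, based on the connectedness of $U$ and the length-space property, then upgrades this to $\xi$ being $\mm$-a.e.\ constant on all of $U$. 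The main technical obstacle is the first step: turning the purely distributional assumption into genuine Sobolev regularity of $\psi\xi$. The heat-flow duality above closes precisely because the monotonicity of $\|\nabla\heat_sg\|_{L^2}$ allows one to absorb the factor $\|\nabla\heat_{2t}(\psi\xi)\|_{L^2}$ on the right-hand side into the very quantity $\|\nabla\heat_t(\psi\xi)\|_{L^2}$ we are trying to estimate.
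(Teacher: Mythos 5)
Your proof is correct, and it reaches the conclusion by a genuinely different route for the key step, namely establishing the Sobolev regularity of a localized version of $\xi$.

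The paper localizes via $\xi_t:=\heat_t(\chi_{B_{2r}(z)}\xi)$, represents $\Delta\xi_t(y)=\int\xi\,\Delta p_{y,t}\,\d\mm$ through the heat kernel, then uses the hypothesis with $f=p_{y,t}$ to replace the integrand by one supported in an annulus away from $y$, and finally invokes the Gaussian estimates \eqref{eq:equi lip}, \eqref{eq:lapheatbd} and Bishop--Gromov to force $\Delta\xi_t\to 0$ uniformly on $B_r(z)$; only then does the Cheeger-energy computation and its lower semicontinuity enter. You instead test the hypothesis directly with $f=\heat_{2t}(\psi\xi)$, exploit self-adjointness of the semigroup and the commutation $\heat_t\Delta\heat_t=\Delta\heat_{2t}$ to write $\|\nabla\heat_t(\psi\xi)\|_{L^2}^2=\int\xi\langle\nabla\psi,\nabla\heat_{2t}(\psi\xi)\rangle\,\d\mm$, and then use Cauchy--Schwarz plus the elementary monotonicity $\tfrac{\d}{\d s}\|\nabla\heat_sg\|_{L^2}^2=-2\|\Delta\heat_sg\|_{L^2}^2\le 0$ to absorb the right-hand side into the quantity being estimated, yielding $\|\nabla\heat_t(\psi\xi)\|_{L^2}\le\|\xi\nabla\psi\|_{L^2}$ uniformly in $t$. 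This is a softer and more self-contained argument: it uses no pointwise kernel bounds at all and works at the level of abstract semigroup theory, whereas the paper's route leans on the full Gaussian machinery developed in Section~3. The later part (a two-cut-off Leibniz cancellation and testing with $f=\psi_2\xi$ to get $\int\psi_1|\nabla(\psi_2\xi)|^2\,\d\mm=0$) is also slightly more direct than the paper's argument, which varies an auxiliary $\eta$ and appeals to density of the linear span of gradients of $H^{1,2}$ functions in the tangent module. Both routes then finish the same way, by local constancy of a Sobolev function with vanishing weak gradient plus connectedness of $U$.

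One minor point worth recording explicitly in a write-up: in the identity $(\psi_2\xi)\nabla\psi_1=\xi\nabla\psi_1$ one should note that $|\nabla\psi_1|$ vanishes $\mm$-a.e.\ outside $\supp\psi_1$ by locality of the minimal relaxed slope, so the equality holds $\mm$-a.e.\ as required. Apart from such bookkeeping, the argument is complete.
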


\begin{proof}
It suffices to check that $\xi$ is locally constant on $U$ because $U$ is connected.
Let $z \in \XX$ and $r\in(0,\frac16)$ with $B_{3r}(z)\subset U$ and let $\psi\in\Lip(\X,\sfd)$ be  identically 1 on $B_{2r}(z)$ and with support in $B_{3r}(z)$.  Also, set  $\xi_t\defeq \heat_t \left( \chi_{B_{2r}(z)} \xi  \right)\in D(\Delta)$, namely $ \xi_t(y)=\int_{B_{2r}(z)} p(x,y,t) \xi(x) \dd\mm(x)$ for $\mm$-a.e.\ $y\in\X$ and notice  that Hille's theorem (see also Proposition \ref{prop:lochille}) gives 
\[
     \Delta \xi_t(y)=\int_{B_{2r}(z)} \Delta_yp(x,y,t)  \xi(x) \dd\mm(x)\stackrel{\eqref{eq:scambioLap}}=
          \int_{B_{2r}(z)} \xi\Delta p_{y,t} \dd{\mass}.
\]
This identity and the assumption \eqref{intparthip} (with $f=p_{y,t}$) give
\[
\Delta \xi_t(y)=\int_\X(\chi_{B_{2r}(z)}-\psi)\xi \Delta p_{y,t} \dd{\mass}-\int_\X\xi \langle\nabla\psi,\nabla p_{y,t}\rangle \,\d\mm
\]
for $\mm$-a.e.\ $y\in\X$. Therefore the assumption   $\xi \in L^{\infty}_{{loc}}(U, \meas)$ tells that  for $y\in B_{r}(z)$ we have
\[
\begin{split}
| \Delta \xi_t|(y) &\le C \int_{B_{3r}(z)\setminus B_{2r}(z)}|\Delta p_{y, t}|\dd{\mass}+C \int_{B_{3r}(z)\setminus B_{2r}(z)}|\nabla p_{y, t}|\dd\mass \\
\text{(by \eqref{eq:equi lip}, \eqref{eq:lapheatbd})}\qquad&\leq C \left(t^{-1}+t^{-1/2}\right) \exp \left(-\frac{ r^2}{5 t} \right)\int_{B_{3r}(z)}\frac1{\mm(B_{\sqrt t}(x))}\,\d\mm(x),
\end{split}
\]
where $C$ is a positive constant which is independent with $t$ and $y$. Now notice that \eqref{eq:bishgrom} and the assumption $r\in(0,\frac16)$ ensure that $\frac1{\mm(B_{\sqrt t}(x))}\leq \frac{C(K,N)}{\mm(B_1(z))}t^{-\frac N2}$ for every $t\in(0,1)$ and $x\in B_{3r}(z)$. It  then follows that $\Delta \xi_t$ uniformly converge to $0$ on $B_{r}(z)$. 

Let now $\phi\in\Lip(\X, \dist)$ be with support in $B_{r}(z)$ and notice that
\[
\begin{split}
\int_\XX|\d(\phi\xi_t)|^2\,\d\mm=\int_\XX |\xi_t|^2|\d\phi|^2+2\xi_t\phi\langle\d\xi_t,\d\phi\rangle+|\phi|^2|\d\xi_t|^2\,\d\mm=\int_\XX |\xi_t|^2|\d\phi|^2-|\phi|^2\xi_t\Delta\xi_t\,\d\mm.
\end{split}
\]
By what we proved we see that the RHS is bounded as $t\to 0^+$, hence the lower semicontinuity of the Cheeger energy ensures that  $\phi\xi\in H^{1,2}(\X,\sfd,\mm)$. Now choose $\phi\in \Lip(\X,\dist)$ identically 1 on $B_{r/2}(z)$ and with support in $B_r(z)$ and let $\eta\in \Lip(\X,\dist)$ be arbitrary with support in $B_{r/2}(z)$. Since $\supp(\eta)\subset\{\phi=1\}$, from \eqref{intparthip} it follows that 
\begin{equation}
\label{eq:withphi}
\int_\XX \phi \xi  \langle \nabla \eta,\nabla f\rangle \dd{\mass} =- \int_\XX\eta\xi  \phi \Delta f \dd\mm
\end{equation} 
for any  $f\in D(\Delta)$. Moreover, by what we just proved the following computations are justified:
\[
\begin{split}
- \int_\XX\phi\xi  \eta \Delta f \dd\mm&=\int_\XX \langle\nabla(\phi\xi  \eta) ,\nabla f\rangle \d\mm=\int_\XX \phi\xi \langle\nabla   \eta ,\nabla f\rangle + \eta\langle\nabla(\phi \xi)  ,\nabla f\rangle \d\mm.
\end{split}
\]
This and \eqref{eq:withphi} imply  that $\int_\XX  \eta\langle\nabla \xi  ,\nabla f\rangle \d\mm=\int_\XX  \eta\langle\nabla (\phi\xi)  ,\nabla f\rangle \d\mm=0$. The arbitrariness of $\eta$ then gives  $\langle\nabla( \phi\xi ) ,\nabla f\rangle =0$ $\mm$-a.e.\ on $B_{r/2}(z)$. Then the density of $D(\Delta)$ in $H^{1,2}(\X,\sfd,\mm)$ gives $\nabla(\phi \xi)=0$ $\mm$-a.e.\ on $B_{r/2}(z)$. In turn this implies (e.g.\ from the Sobolev to Lipschitz property) that $\phi\xi$, and thus $\xi$, has a representative which is constant in $B_{r/2}(z)$, which is sufficient to conclude.
\end{proof}
We have now all the ingredients to prove the main equivalence result of this manuscript.

\begin{proof}[Proof of Theorem \ref{thm:main}]
Under \eqref{main:uniformbound}, we can apply Theorem \ref{intbyparts} and deduce the integration-by-parts formula: 
\begin{equation}\notag
\int_\XX \langle \dd\phi,\dd f\rangle\dv{\haus^n}{\mass}\dd{\mass} =-\int_\XX \phi\tr(\hess f)\dv{\haus^n}{\mass}\dd{\mass},
\end{equation}
valid for any $\phi\in\Lip(\X, \dist)$ with support in $U$ and any $f\in D(\Delta)$. Now notice that \eqref{main:uniformbound} together with Theorem \ref{thm:RN}  imply that $\dv{\haus^n}{\mass} \in L^{\infty}_{{loc}}(U, \meas)$. Hence if item \ref{thm:item1} holds,  we can apply   Lemma \ref{prop:main} with  $\xi=\chi_U\dv{\haus^n}{\mass}$ to deduce that item \ref{thm:item2} holds as well.

Conversely, if item \ref{thm:item2} holds, for all $\phi$ and $f$ as above, we have
\begin{equation}\notag
-\int_\XX  \phi\Delta f\dd{\mass}=  \int_\XX \langle \dd\phi,\dd f\rangle\dd{\mass} =-\int_\XX \phi\tr(\hess f)\dd{\mass},
\end{equation}
having used item \ref{thm:item2} and the integration-by-parts formula in the last step. By the arbitrariness of $\phi$, this proves item \ref{thm:item1}.
\end{proof}

\begin{proof}[Proof of Theorem \ref{thm:mainres}] From the Bishop-Gromov inequality \eqref{eq:bishgrom} it easily follows that for any bounded set $A$ of $\XX$ we have

\begin{equation}\label{asdwe}
	\inf_{r \in (0, 1),x \in A}\frac{\meas(B_r(x))}{r^N}>0.
\end{equation}

On the other hand, Theorem \ref{thm:equivnon} gives that the essential dimension of $\X$ is $N$, thus Theorem \ref{thm:generalizedhan} with \eqref{eq:bochner}  shows \begin{equation}
	\label{eq:toconclude2}
	\Delta f=\tra(\hess f)\qquad\forall f\in D(\Delta).
\end{equation}
 Then the conclusion follows from \eqref{asdwe}, \eqref{eq:toconclude2} and Theorem \ref{thm:main}.
\end{proof}

\begin{proof}[Proof of Theorem \ref{thm:close}] From the continuity of $\haus^N$ in the compact (as a consequence of Theorem \ref{thm:moduli}) space of unit balls in $\RCD(K,N)$ spaces stated in Theorem \ref{thm:conthN}, we see that picking $\epsilon$ sufficiently small, the conclusion $\abs{\haus^N(B_1(x))-\haus^N(B_1(y))}<\delta$ holds true. Thus we concentrate on the first part of the claim.

The proof is done by contradiction. If not, there exist a sequence $\epsilon_i \to 0^+$, a sequence of pointed $\RCD(K, N)$ spaces $(\XX_i, \dist_i, \meas_i, x_i)$ and a sequence of non-collapsed $\RCD(K, N)$ spaces $(\YY_i, \dist_{\YY_i}, \haus^N, y_i)$ with $\haus^N(B_1(y_i))\ge v$ such that $(\XX_i, \dist_i,  x_i)$ $\epsilon_i$-pGH close to $(\YY_i, \dist_{\YY_i}, y_i)$ and so that $\meas_i$ is not proportional to $\haus^N$.

Thanks to Theorem \ref{thm:moduli}, after passing to a non-relabelled subsequence, there exists a pointed $\RCD(K, N)$ space $(\ZZ, \dist_\ZZ, \meas_\ZZ, z)$ such that 
\begin{equation}\notag
\left(\XX_i, \dist_i, \frac{1}{\meas_i(B_1(x_i))}\meas_i, x_i\right) \stackrel{\mathrm{pmGH}}{\to} (\ZZ, \dist_\ZZ, \meas_\ZZ, z)
\end{equation}
and 
\begin{equation}\notag
(\YY_i, \dist_{\YY_i}, y_i) \stackrel{\mathrm{pGH}}{\to} (\ZZ, \dist_\ZZ,  z).
\end{equation} 
Thanks to Theorem \ref{thm:pGH} with (\ref{eq:volumenot}), we have 
\begin{equation}\notag
(\YY_i, \dist_{\YY_i}, \haus^N, y_i) \stackrel{\mathrm{pmGH}}{\to} (\ZZ, \dist_\ZZ, \haus^N,  z),
\end{equation}
with $\haus^N(B_1(z)) \ge v$. Recalling Theorem \ref{thm:equivnon}, we see that $(\ZZ,\dist_\ZZ,\mass_\ZZ)$ is weakly non-collapsed, in particular, has essential dimension $N$. Then the lower semicontinuity statement given by Theorem \ref{thm:loweress} gives
\begin{equation}\notag
N \ge \liminf_{i \to \infty}\mathrm{essdim}(\XX_i) \ge \mathrm{essdim}(\ZZ)=N.
\end{equation}
It follows that $\mathrm{essdim}(\XX_i)=N$ for any sufficiently large $i$. Thus from the characterization of weakly non-collapsed spaces in Theorem \ref{thm:equivnon} and our main result Theorem \ref{thm:mainres} it follows that $\mm_i=c_i\haus^N$ for every $i$ sufficiently large. This provides the desired contradiction.
\end{proof}

\begin{proof}[Proof of Theorem \ref{thm:tangcha}]
Let us take $r_i \to 0^+$ with (\ref{eq:ulla_ulla}), according to the assumption $(\YY,\dist_\YY,\mass_\YY,y)\in\mathrm{Tan}(\XX, \dist,\meas,x)$. As the essential dimension does not change under rescaling as in the LHS of \eqref{eq:ulla_ulla}, we see, by Theorem \ref{thm:loweress} and the assumption $\mathrm{essdim}(\YY)=N$, that $\mathrm{essdim}(\XX)=N$.
Thus we conclude by our main result Theorem \ref{thm:mainres}, taking into account also Theorem \ref{thm:equivnon}.
\end{proof}

\begin{proof}[Proof of Theorem \ref{thm:fromfar}]
Let us take $r_i\rightarrow \infty$ and a sequence of rescaled spaces as in the LHS of \eqref{eq:ulla_ulla}; by Theorem \ref{thm:moduli} (here we use the fact that the space is an $\RCD(K,N)$ space with $K=0$) we can extract a non relabelled subsequence of $\{r_i\}_i$ such that such rescaled spaces converge to the $\RCD(0,N)$ space $(\YY,\dist_\YY,\mass_\YY,y)$ in the pmGH topology. Therefore, if $z\in\YY$, we take a sequence $\{y_i\}_i\subseteq\XX$ that converges to $z$ under this pmGH convergence,
$$
\frac{\mass_\YY(B_r(z))}{r^N}=\lim_i \frac{\mass(B_{r r_i}(y_i))}{r^N \mass(B_{r_i}(x))}=\lim_i \frac{\mass(B_{r r_i}(y_i))}{(r r_i)^N}\frac{r_i^N}{ \mass(B_{r_i}(x))}\le \limsup_i {\mass(B_1 (y_i))}\frac{r_i^N}{ \mass(B_{r_i}(x))}\le C
$$
where $C$ is independent of $r$. 
Here we have used the Bishop-Gromov inequality \eqref{eq:bishgrom} 
 for the first inequality and our assumptions for the last inequality. 
 Therefore, using Theorem \ref{thm:equivnon} we see that $\mathrm{essdim}(\YY)=N$, so that we can conclude as in the proof of Theorem \ref{thm:tangcha}.
\end{proof}

\bibliographystyle{siam}
\bibliography{../weakNC-NC.bib}

\begin{thebibliography}{10}

\bibitem{AmbICM}
{\sc L.~Ambrosio}, {\em Calculus, heat flow and curvature-dimension bounds in
  metric measure spaces}, in Proceedings of the {I}nternational {C}ongress of
  {M}athematicians---{R}io de {J}aneiro 2018. {V}ol. {I}. {P}lenary lectures,
  World Sci. Publ., Hackensack, NJ, 2018, pp.~301--340.

\bibitem{ACM14}
{\sc L.~Ambrosio, M.~Colombo, and S.~Di~Marino}, {\em Sobolev spaces in metric
  measure spaces: reflexivity and lower semicontinuity of slope}, in
  Variational methods for evolving objects, vol.~67 of Adv. Stud. Pure Math.,
  Math. Soc. Japan, [Tokyo], 2015, pp.~1--58.

\bibitem{AmbrosioGigliSavare11}
{\sc L.~Ambrosio, N.~Gigli, and G.~Savar{\'e}}, {\em Calculus and heat flow in
  metric measure spaces and applications to spaces with {R}icci bounds from
  below}, Invent. Math., 195 (2014), pp.~289--391.

\bibitem{Ambrosio_2014}
\leavevmode\vrule height 2pt depth -1.6pt width 23pt, {\em Metric measure
  spaces with {R}iemannian {R}icci curvature bounded from below}, Duke
  Mathematical Journal, 163 (2014), pp.~1405--1490.

\bibitem{AmbrosioGigliSavare12}
\leavevmode\vrule height 2pt depth -1.6pt width 23pt, {\em Bakry-\'{E}mery
  curvature-dimension condition and {R}iemannian {R}icci curvature bounds}, The
  Annals of Probability, 43 (2015), pp.~339--404.

\bibitem{AH16}
{\sc L.~Ambrosio and S.~Honda}, {\em New stability results for sequences of
  metric measure spaces with uniform {R}icci bounds from below}, Measure Theory
  in Non-Smooth Spaces,  (2017), pp.~1--51.

\bibitem{ambrosio2017local}
\leavevmode\vrule height 2pt depth -1.6pt width 23pt, {\em Local spectral
  convergence in {${\rm RCD}^*(K,N)$} spaces}, Nonlinear Anal., 177 (2018),
  pp.~1--23.

\bibitem{AHPT}
{\sc L.~Ambrosio, S.~Honda, J.~W. Portegies, and D.~Tewodrose}, {\em Embedding
  of {${\rm RCD}^{\ast}(K, N)$} spaces in {$L^2$} via eigenfunctions}, J.
  Funct. Anal., 280 (2021), pp.~Paper No. 108968, 72.

\bibitem{AHT17}
{\sc L.~Ambrosio, S.~Honda, and D.~Tewodrose}, {\em Short-time behavior of the
  heat kernel and {W}eyl's law on {${\rm RCD}^*(K,N)$} spaces}, Ann. Global
  Anal. Geom., 53 (2018), pp.~97--119.

\bibitem{AmbrosioMondinoSavare13-2}
{\sc L.~Ambrosio, A.~Mondino, and G.~Savar{\'e}}, {\em On the {B}akry-\'{E}mery
  condition, the gradient estimates and the {L}ocal-to-{G}lobal property of
  ${RCD}^*({K}, {N})$ metric measure spaces}, The Journal of Geometric
  Analysis, 26 (2014), pp.~1--33.

\bibitem{AmbrosioMondinoSavare13}
{\sc L.~Ambrosio, A.~Mondino, and G.~Savar\'{e}}, {\em Nonlinear diffusion
  equations and curvature conditions in metric measure spaces}, Mem. Amer.
  Math. Soc., 262 (2019), pp.~v+121.

\bibitem{AmbrosioTilli04}
{\sc L.~Ambrosio and P.~Tilli}, {\em Topics on analysis in metric spaces},
  vol.~25 of Oxford Lecture Series in Mathematics and its Applications, Oxford
  University Press, Oxford, 2004.

\bibitem{antonelli2019volume}
{\sc G.~Antonelli, E.~Bru\'{e}, and D.~Semola}, {\em Volume bounds for the
  quantitative singular strata of non collapsed {RCD} metric measure spaces},
  Anal. Geom. Metr. Spaces, 7 (2019), pp.~158--178.

\bibitem{BakryEmery85}
{\sc D.~Bakry and M.~{\'E}mery}, {\em Diffusions hypercontractives}, in
  S\'eminaire de probabilit\'es, {XIX}, 1983/84, vol.~1123 of Lecture Notes in
  Math., Springer, Berlin, 1985, pp.~177--206.

\bibitem{Berard:1994vy}
{\sc P.~B\'{e}rard, G.~Besson, and S.~Gallot}, {\em Embedding {R}iemannian
  manifolds by their heat kernel}, Geom. Funct. Anal., 4 (1994), pp.~373--398.

\bibitem{BruPasSem20}
{\sc E.~Bru\`e, E.~Pasqualetto, and D.~Semola}, {\em Rectifiability of
  {$\rm{RCD}(K,N)$} spaces via {$\delta$}-splitting maps}, Ann. Fenn. Math., 46
  (2021), pp.~465--482.

\bibitem{bru2018constancy}
{\sc E.~Bru\'{e} and D.~Semola}, {\em Constancy of the dimension for {${\rm
  RCD}(K,N)$} spaces via regularity of {L}agrangian flows}, Comm. Pure Appl.
  Math., 73 (2020), pp.~1141--1204.

\bibitem{CGP21}
{\sc E.~Caputo, N.~Gigli, and E.~Pasqualetto}, {\em Parallel transport on
  non-collapsed {${\rm RCD}(K,N)$} spaces}.
\newblock Preprint, arXiv:2108.07531.

\bibitem{Cheeger-Colding96}
{\sc J.~Cheeger and T.~H. Colding}, {\em Lower bounds on {R}icci curvature and
  the almost rigidity of warped products}, Ann. of Math. (2), 144 (1996),
  pp.~189--237.

\bibitem{Cheeger-Colding97I}
\leavevmode\vrule height 2pt depth -1.6pt width 23pt, {\em On the structure of
  spaces with {R}icci curvature bounded below. {I}}, J. Differential Geom., 46
  (1997), pp.~406--480.

\bibitem{Cheeger-Colding97II}
\leavevmode\vrule height 2pt depth -1.6pt width 23pt, {\em On the structure of
  spaces with {R}icci curvature bounded below. {II}}, J. Differential Geom., 54
  (2000), pp.~13--35.

\bibitem{Cheeger-Colding97III}
\leavevmode\vrule height 2pt depth -1.6pt width 23pt, {\em On the structure of
  spaces with {R}icci curvature bounded below. {III}}, J. Differential Geom.,
  54 (2000), pp.~37--74.

\bibitem{Colding97}
{\sc T.~H. Colding}, {\em Ricci curvature and volume convergence}, Ann. of
  Math. (2), 145 (1997), pp.~477--501.

\bibitem{DPG17}
{\sc G.~De~Philippis and N.~Gigli}, {\em Non-collapsed spaces with {R}icci
  curvature bounded from below}, J. \'{E}c. polytech. Math., 5 (2018),
  pp.~613--650.

\bibitem{DPMR16}
{\sc G.~De~Philippis, A.~Marchese, and F.~Rindler}, {\em On a conjecture of
  {C}heeger}, in Measure theory in non-smooth spaces, Partial Differ. Equ.
  Meas. Theory, De Gruyter Open, Warsaw, 2017, pp.~145--155.

\bibitem{DiestelUhl77}
{\sc J.~Diestel and J.~J. Uhl, Jr.}, {\em Vector measures}, American
  Mathematical Society, Providence, R.I., 1977.
\newblock With a foreword by B. J. Pettis, Mathematical Surveys, No. 15.

\bibitem{Erbar-Kuwada-Sturm13}
{\sc M.~Erbar, K.~Kuwada, and K.-T. Sturm}, {\em On the equivalence of the
  entropic curvature-dimension condition and {B}ochner's inequality on metric
  measure spaces}, Invent. Math., 201 (2014), pp.~1--79.

\bibitem{Gigli10}
{\sc N.~Gigli}, {\em On the heat flow on metric measure spaces: existence,
  uniqueness and stability}, Calc. Var. PDE, 39 (2010), pp.~101--120.

\bibitem{Gigli13}
\leavevmode\vrule height 2pt depth -1.6pt width 23pt, {\em The splitting
  theorem in non-smooth context}.
\newblock Preprint, arXiv:1302.5555, 2013.

\bibitem{Gigli14}
\leavevmode\vrule height 2pt depth -1.6pt width 23pt, {\em Nonsmooth
  differential geometry - an approach tailored for spaces with {R}icci
  curvature bounded from below}, Mem. Amer. Math. Soc., 251 (2014), pp.~v+161.

\bibitem{Gigli12}
\leavevmode\vrule height 2pt depth -1.6pt width 23pt, {\em On the differential
  structure of metric measure spaces and applications}, Mem. Amer. Math. Soc.,
  236 (2015), pp.~vi+91.

\bibitem{Gigli17}
\leavevmode\vrule height 2pt depth -1.6pt width 23pt, {\em {Lecture notes on
  differential calculus on {$\rm{RCD}$} spaces}}, Publ. Res. Inst. Math. Sci.,
  54 (2018), pp.~855--918.

\bibitem{Gigli-Kuwada-Ohta10}
{\sc N.~Gigli, K.~Kuwada, and S.-i. Ohta}, {\em Heat flow on {A}lexandrov
  spaces}, Communications on Pure and Applied Mathematics, 66 (2013),
  pp.~307--331.

\bibitem{GMS15}
{\sc N.~Gigli, A.~Mondino, and G.~Savar\'e}, {\em Convergence of pointed
  non-compact metric measure spaces and stability of {R}icci curvature bounds
  and heat flows}, Proc. Lond. Math. Soc. (3), 111 (2015), pp.~1071--1129.

\bibitem{GP16-2}
{\sc N.~Gigli and E.~Pasqualetto}, {\em Behaviour of the reference measure on
  {${\rm RCD}$} spaces under charts}.
\newblock Accepted at Comm. Anal. Geom., arXiv: 1607.05188, 2016.

\bibitem{GP16}
\leavevmode\vrule height 2pt depth -1.6pt width 23pt, {\em Equivalence of two
  different notions of tangent bundle on rectifiable metric measure spaces}.
\newblock To appear in Comm. Anal. Geom., arXiv: 1611.09645, 2016.

\bibitem{GP19}
\leavevmode\vrule height 2pt depth -1.6pt width 23pt, {\em Lectures on
  {N}onsmooth {D}ifferential {G}eometry}, Springer International Publishing,
  2020.

\bibitem{Han14}
{\sc B.-X. Han}, {\em Ricci tensor on {${\rm RCD}^*(K,N)$} spaces}, J. Geom.
  Anal., 28 (2018), pp.~1295--1314.

\bibitem{han2021measure}
\leavevmode\vrule height 2pt depth -1.6pt width 23pt, {\em Measure rigidity of
  synthetic lower {R}icci curvature bound on {R}iemannian manifolds}, Adv.
  Math., 373 (2020), pp.~107327, 31.

\bibitem{Heinonen01}
{\sc J.~Heinonen}, {\em Lectures on {A}nalysis on {M}etric {S}paces},
  Universitext, Springer-Verlag, New York, 2001.

\bibitem{honda2021isometric}
{\sc S.~Honda}, {\em Isometric immersions of {RCD} spaces}.
\newblock To appear in Comment. Math. Helv., arXiv: 2005.01222.

\bibitem{H19}
{\sc S.~Honda}, {\em New differential operator and noncollapsed {RCD} spaces},
  Geom. Topol., 24 (2020), pp.~2127--2148.

\bibitem{honda2019sphere}
{\sc S.~Honda and I.~Mondello}, {\em Sphere theorems for {RCD} and stratified
  spaces}, Ann. Sc. Norm. Super. Pisa Cl. Sci. (5), 22 (2021), pp.~903--923.

\bibitem{honda2021sobolev}
{\sc S.~Honda and Y.~Sire}, {\em {Sobolev mappings between RCD spaces and
  applications to harmonic maps: a heat kernel approach}}.
\newblock Preprint, arXiv: 2105.08578, 2021.

\bibitem{honda2020characterization}
{\sc S.~Honda and X.~Zhu}, {\em {A characterization of non-collapsed $RCD(K,
  N)$ spaces via Einstein tensors}}.
\newblock Preprint, arXiv:2010.02530, 2020.

\bibitem{jiang2014heat}
{\sc R.~Jiang, H.~Li, and H.~Zhang}, {\em Heat {K}ernel {B}ounds on {M}etric
  {M}easure {S}paces and {S}ome {A}pplications}, Potential Analysis, 44 (2014).

\bibitem{kapovitch2019weakly}
{\sc V.~Kapovitch and C.~Ketterer}, {\em Weakly noncollapsed {RCD} spaces with
  upper curvature bounds}, Anal. Geom. Metr. Spaces, 7 (2019), pp.~197--211.

\bibitem{KapMon19}
{\sc V.~Kapovitch and A.~Mondino}, {\em On the topology and the boundary of
  {$N$}-dimensional {$\rm{RCD}(K,N)$} spaces}, Geom. Topol., 25 (2021),
  pp.~445--495.

\bibitem{KelMon16}
{\sc M.~Kell and A.~Mondino}, {\em On the volume measure of non-smooth spaces
  with {R}icci curvature bounded below}, Annali della Scuola normale superiore
  di Pisa, Classe di scienze, XVII (2018), pp.~593--610.

\bibitem{Kita17}
{\sc Y.~Kitabeppu}, {\em A {B}ishop-type inequality on metric measure spaces
  with {R}icci curvature bounded below}, Proc. Amer. Math. Soc., 145 (2017),
  pp.~3137--3151.

\bibitem{kitabeppu2017sufficient}
\leavevmode\vrule height 2pt depth -1.6pt width 23pt, {\em A sufficient
  condition to a regular set being of positive measure on {$\rm{RCD}$} spaces},
  Potential Anal., 51 (2019), pp.~179--196.

\bibitem{Lott-Villani09}
{\sc J.~Lott and C.~Villani}, {\em Ricci curvature for metric-measure spaces
  via optimal transport}, Ann. of Math. (2), 169 (2009), pp.~903--991.

\bibitem{Mondino-Naber14}
{\sc A.~Mondino and A.~Naber}, {\em Structure theory of metric measure spaces
  with lower {R}icci curvature bounds}, Journal of the European Mathematical
  Society, 21 (2019), pp.~1809--1854.

\bibitem{Nagata83}
{\sc J.-i. Nagata}, {\em Modern dimension theory}, vol.~2 of Sigma Series in
  Pure Mathematics, Heldermann Verlag, Berlin, revised~ed., 1983.

\bibitem{Rajala12}
{\sc T.~Rajala}, {\em Local {P}oincar\'e inequalities from stable curvature
  conditions on metric spaces}, Calc. Var. Partial Differential Equations, 44
  (2012), pp.~477--494.

\bibitem{Savare13}
{\sc G.~Savar{\'e}}, {\em Self-improvement of the {B}akry-\'{E}mery condition
  and {W}asserstein contraction of the heat flow in {${\rm RCD}(K,\infty)$}
  metric measure spaces}, Discrete Contin. Dyn. Syst., 34 (2014),
  pp.~1641--1661.

\bibitem{Sturm96II}
{\sc K.-T. Sturm}, {\em Analysis on local {D}irichlet spaces. {II}. {U}pper
  {G}aussian estimates for the fundamental solutions of parabolic equations},
  Osaka J. Math., 32 (1995), pp.~275--312.

\bibitem{Sturm96III}
\leavevmode\vrule height 2pt depth -1.6pt width 23pt, {\em Analysis on local
  {D}irichlet spaces. {III}. {T}he parabolic {H}arnack inequality}, J. Math.
  Pures Appl. (9), 75 (1996), pp.~273--297.

\bibitem{Sturm06I}
\leavevmode\vrule height 2pt depth -1.6pt width 23pt, {\em On the geometry of
  metric measure spaces. {I}}, Acta Math., 196 (2006), pp.~65--131.

\bibitem{Sturm06II}
\leavevmode\vrule height 2pt depth -1.6pt width 23pt, {\em On the geometry of
  metric measure spaces. {II}}, Acta Math., 196 (2006), pp.~133--177.

\bibitem{Villani2017}
{\sc C.~Villani}, {\em {I}n\'egalit\'es isop\'erim\'etriques dans les espaces
  m\'etriques mesur\'es [d'apr\`es {F}. {C}avalletti \& {A}. {M}ondino]}.
\newblock S\'eminaire {B}ourbaki, available at:
  http://www.bourbaki.ens.fr/TEXTES/1127.pdf.

\bibitem{Weaver01}
{\sc N.~Weaver}, {\em Lipschitz algebras and derivations. {II}. {E}xterior
  differentiation}, J. Funct. Anal., 178 (2000), pp.~64--112.

\end{thebibliography}
\end{document}